\newtheorem{theorem}{Theorem}
\newtheorem{proposition}{Proposition}
\newtheorem{lemma}{Lemma}
\newtheorem*{definition*}{Definition}
\newtheorem*{theorem*}{Theorem}
\newtheorem{claim}{Claim}
\newcommand{\E}[1]{\mathbb{E}\left(#1\right)}
\newcommand{\Pof}[1]{\mathbb{P}\left(#1\right)}
\renewcommand{\P}{\mathbb{P}}
\newcommand{\R}{\mathbb{R}}
\newcommand{\Z}{\mathbb{Z}}
\newcommand{\F}{\mathcal{F}}
\newcommand{\one}[1]{\mathds{1}_{#1}}
\DeclareMathOperator{\GL}{GL}
\DeclareMathOperator{\Flags}{Flags}
\newcommand{\dist}{\operatorname{dist}}
\newcommand{\flags}{\Flags}
\renewcommand{\det}{\text{det}}
\newcommand{\eqd}{\stackrel{(d)}{=}}
\newcommand{\lowerdim}{\underline{\dim}}
\newcommand{\upperdim}{\overline{\dim}}
\begin{document}

\title{Entropy and dimension of disintegrations of stationary measures}

\author{Pablo Lessa\thanks{IMERL, Facultad de Ingeniería, Julio Herrera y Reissig 565,  11300 Montevideo, Uruguay.  Email: plessa@fing.edu.uy}}

\maketitle

\begin{abstract}
We extend a result of Ledrappier, Hochman, and Solomyak on exact dimensionality of stationary measures for \(\text{SL}_2(\R)\) to disintegrations of stationary measures for \(\GL(\R^d)\) onto the one dimensional foliations of the space of flags obtained by forgetting a single subspace.

The dimensions of these conditional measures are expressed in terms of the gap between consecutive Lyapunov exponents, and a certain entropy associated to the group action on the one dimensional foliation they are defined on.   It is shown that the entropies thus defined are also related to simplicity of the Lyapunov spectrum for the given measure on \(\GL(\R^d)\).
\end{abstract}

\section{Introduction}

It was shown by Ledrappier \cite{ledrappier}, Hochman and Solomyak \cite{hochman-solomyak2017}, that if \(\nu\) is a probability on the projective space of \(\R^2\) which is stationary with respect to a probability \(\mu\) on \(\text{SL}_2(\R)\) with finite Lyapunov exponents, then \(\nu\) is exact dimensional and its dimension is \(\frac{\kappa}{2\chi}\) where \(\kappa\) is the Furstenberg entropy and \(\chi\) is the largest Lyapunov exponent (hence \(2\chi\) is the gap between the two Lyapunov exponents).

Suppose now that \(\mu\) is a probability on \(\text{SL}_3(\R)\) and \(\nu\) is a \(\mu\)-stationary probability on the space of flags in \(\R^3\) (i.e. pairs \((L,P)\) where \(L \subset P\), \(L\) is a one dimensional subspace, and \(P\) is a two dimensional subspace), which is a three-dimensional manifold.

We consider here the two foliations of the space of flags obtained by partitioning into sets of flags sharing the same one dimensional subspace on the one hand, and flags sharing the same two dimensional subspace on the other.   These are foliations by circles, and furthermore the action of any invertible linear self mapping of \(\R^3\) preserves both foliations.

In this context we show that the conditional measures obtained by disintegrating \(\nu\) with respect to these two foliations, are exact dimensional.  Furthermore we express the dimension of these disintegrations in terms of the gap between consecutive Lyapunov exponents as well as two entropies \(\kappa_1,\kappa_2\).  Before establishing the dimension formula we show that the entropies \(\kappa_i\) bound the gaps between exponents from below and therefore, in principle, yield a criteria for simplicity of the Lyapunov spectrum.

We prove our results in a slightly more general context, that of actions of \(\GL(\R^d)\) on the space of complete flags in \(\R^d\).  In this context there are \(d-1\) associated one dimensional foliations which correspond to ``forgetting'' the \(i\)-dimensional subspace of all flags for some \(i \in \lbrace 1,\ldots, d-1\rbrace\).

\subsection{Preliminaries}

 Let \(\sigma_1(A) \ge \sigma_2(A) \ge \cdots \ge \sigma_d(A) > 0\) denote the singular values of an element \(A \in \GL(\R^d)\) with respect to the standard inner product.  
 
 We denote by \(\Flags(\R^d)\) the space of complete flags in \(\R^d\), an element \(F \in \Flags(\R^d)\) is of the form \(F = (S_0,S_1,\ldots,S_d)\) where \(S_i\) is an \(i\)-dimensional subspace of \(\R^d\) for each \(i = 0,\ldots,d\) and \(S_i \subset S_{i+1}\) for \(i = 0,\ldots,d-1\). 

 Let \(\Flags_i(\R^d)\) denote the space of flags missing their \(i\)-dimensional subspace.  For a given complete flag \(F = (S_0,\ldots,S_d)\) we denote by \(F_i\) its projection to \(\Flags_i(\R^d)\) (i.e. the sequence obtained by removing \(S_i\) from \(F\)).
 
 We use the notation \(X \eqd Y\) for equality in distribution between random elements \(X\) and \(Y\). And \(\nu_1 \ll \nu_2\) to mean that the probability \(\nu_1\) is absolutely continuous with respect to \(\nu_2\).
 
 If \(X\) and \(Y\) are random elements taking values in complete separable metric spaces (a version of) the conditional distribution of \(X\) given \(Y\) is a \(\sigma(Y)\)-measurable random probability \(\nu_Y\) on the range of \(X\) such that
 \[\int f(x)d\nu_Y(x) = \E{f(X)|Y}\]
 for all continuous bounded real functions (here the right-hand side is the conditional expectation of \(f(X)\) with respect to the \(\sigma\)-algebra generated by \(Y\)).   Such a conditional distribution is well defined up to sets of zero measure but we will abuse notation slightly referring to `the conditional distribution'.

 It is always the case that there exists a Borel mapping \(y \mapsto \nu(y)\) from the range of \(Y\) to the space of probabilities on the range of \(X\) such that \(\nu(Y)\) is a version of the conditional distribution of \(X\) given \(Y\).  Fixing such a mapping one may speak of \(\nu_y\) for \(y\) non-random in the range of \(Y\).

 The lower local dimension of a probability measure \(\nu\) on a metric space at a point \(x\) is defined by
 \[\lowerdim_x(\nu) = \liminf\limits_{r \to 0}\frac{\log\left(\nu(B_r(x)\right)}{\log(r)},\]
 while the upper local dimension is defined by
 \[\upperdim_x(\nu) = \limsup\limits_{r \to 0}\frac{\log\left(\nu(B_r(x)\right)}{\log(r)},\]
 where \(B_r(x)\) is the ball of radius \(r\) centered at \(x\).
 
 If the lower and upper dimensions of \(\nu\) are equal to the same constant \(\nu\)-almost everywhere then we say that \(\nu\) is exact dimensional and define its global dimension \(\dim(\nu)\) as the given constant.
 
\subsection{Statement of main results} 

 Suppose that \(A\) is a random element of \(\GL(\R^d)\) with distribution \(\mu\) such that
 \[\E{\left|\log\left(\sigma_i(A)\right)\right|} < +\infty\text{ for }i= 1,\ldots,d,\]
 and let \(F = (S_0,\ldots,S_d)\) be a random element of \(\Flags(\R^d)\) with distribution \(\nu\) which is independent from \(A\) and such that
 \[F \eqd AF.\]
 
 The existence of such a pair \((A,F)\) is equivalent to the fact that \(\nu\) is a \(\mu\)-stationary probability, as first defined in \cite{furstenberg1963}.

 The Lyapunov exponents \(\chi_1,\ldots,\chi_d\) of \(\mu\) relative to \(\nu\) are defined by the equations
 \[\chi_1+\cdots+\chi_i = \E{\log\left(\left|\det_{S_i}(A)\right|\right)}\text{ for }i=1,\ldots,d,\]
 where \(|\det_S(A)|\) is the Jacobian of the restriction of \(A\) to the subspace \(S\) (where the volume measure induced by standard inner product is used on \(S\) and its image).  In the degenerate case where \(S = \lbrace 0\rbrace\) one has \(|\det_S(A)| = 1\), and if \(S\) is one dimensional one has \(|\det_S(A)| = \|A_{|S}\|\).  
 
 The Lyapunov exponents given by the multiplicative ergodic theorem of \cite{oseledets} for a product of i.i.d. random matrices of distribution \(\mu\) are obtained by maximizing the sums \(\chi_1 + \cdots + \chi_i\) over all stationary probabilities \(\nu\) as shown in \cite{furstenberg-kifer}.

 Fix \(i \in \lbrace 1,\ldots, d-1\rbrace\), let \(\nu_i\) be the projection of \(\nu\) to \(\Flags_i(\R^d)\), and let \(\nu_{F_i}\) be the conditional distribution of \(F\) given \(F_i\).

\begin{theorem}[Inequality between entropy and gap between exponents]\label{gaptheorem}
 If \(\nu\) is the unique stationary probability on \(\Flags(\R^d)\) which projects to \(\nu_i\) then \(A\nu_{F_i} \ll \nu_{AF_i}\) almost surely,
 \[0 \le \kappa_i = \E{\log\left(\frac{dA\nu_{F_i}}{d\nu_{AF_i}}\left(AF\right)\right)} \le \chi_i - \chi_{i+1},\]
 and \(\kappa_i = 0\) if and only if \(A\nu_{F_i} = \nu_{AF_i}\) almost surely.
\end{theorem}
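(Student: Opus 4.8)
The plan is to realise each leaf of the ``forget the $i$-dimensional subspace'' foliation as a projective line, identify the relevant Jacobian there, and then run a change-of-variables and relative-entropy computation; the only genuinely hard point is an upper bound for a relative entropy in terms of that Jacobian.

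\textbf{Geometry of the leaves.} The leaf through $F=(S_0,\ldots,S_d)$ consists of the complete flags agreeing with $F$ away from their $i$-th term, and is naturally the projective line $\mathbb{P}(S_{i+1}/S_{i-1})$, with $F$ corresponding to $S_i/S_{i-1}$; the action of $A$ carries it to the leaf through $AF$ by the projectivisation of $S_{i+1}/S_{i-1}\to AS_{i+1}/AS_{i-1}$. Equipping each such projective line with the Fubini--Study metric associated to the standard inner product, diagonalising the induced map shows that the derivative of $A$ along the leaf at $F$ equals $|\det_{S_{i-1}}(A)|\,|\det_{S_{i+1}}(A)|\,|\det_{S_i}(A)|^{-2}$, so that
\[
\chi_i-\chi_{i+1} \;=\; -\,\mathbb{E}\!\left(\log\bigl(\text{Jacobian of }A\text{ along the leaf at }F\bigr)\right);
\]
in particular $\chi_i-\chi_{i+1}\ge 0$ comes out of the argument rather than being assumed.

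\textbf{Absolute continuity, the entropy, and the lower bound.} Since $A$ and $F$ are independent and $F_i$ is a measurable function of $F$, the conditional law of $AF$ given $(A,F_i)$ is $A\nu_{F_i}$. As $\sigma(AF_i)\subseteq\sigma(A,F_i)$, and since $F\eqd AF$ together with the commutation of the projection to $\Flags_i$ with the action makes $\nu_{AF_i}$ a version of the conditional law of $AF$ given $AF_i$, the tower property gives the barycentre identity $\nu_{AF_i}=\mathbb{E}(A\nu_{F_i}\mid AF_i)$ a.s. Evaluating this on the members of a fixed countable algebra generating the Borel $\sigma$-algebra of the leaves shows $A\nu_{F_i}(B)=0$ whenever $\nu_{AF_i}(B)=0$, hence $A\nu_{F_i}\ll\nu_{AF_i}$ a.s.; write $\varphi=dA\nu_{F_i}/d\nu_{AF_i}$. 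Conditioning on $(A,F_i)$, under which $AF\sim A\nu_{F_i}=\varphi\,\nu_{AF_i}$, rewrites the defining integral as
\[
\kappa_i \;=\; \mathbb{E}\!\left(\textstyle\int\log\varphi\;dA\nu_{F_i}\right) \;=\; \mathbb{E}\!\left(\textstyle\int\varphi\log\varphi\;d\nu_{AF_i}\right) \;=\; \mathbb{E}\bigl(D(A\nu_{F_i}\,\|\,\nu_{AF_i})\bigr),
\]
the expectation being over $(A,F_i)\sim\mu\otimes\nu_i$ and $D$ the relative entropy. By Gibbs' inequality each term is $\ge 0$, so $\kappa_i\ge 0$ (and is well defined in $[0,+\infty]$), and $\kappa_i=0$ iff $D(A\nu_{F_i}\|\nu_{AF_i})=0$ a.s., i.e. $A\nu_{F_i}=\nu_{AF_i}$ a.s.; the converse is immediate. (Uniqueness of $\nu$ above $\nu_i$ makes $\nu_{F_i}$ canonical and is used in the upper bound.)

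\textbf{Upper bound.} Iterate: with $A_1,A_2,\ldots$ i.i.d.\ copies of $A$ independent of $F$, put $F^{(n)}=A_n\cdots A_1F\eqd F$ and, on the leaf through $F^{(n)}$, set $\beta_n=(A_n\cdots A_1)\nu_{F_i}$, $\gamma_n=\nu_{F^{(n)}_i}$. Factoring $d\beta_n/d\gamma_n$ multiplicatively along the composition, the step-$k$ factor is $dA_k\nu_{F^{(k-1)}_i}/d\nu_{A_kF^{(k-1)}_i}$ evaluated at $A_kF^{(k-1)}$, which by independence has the law of $\varphi(AF)$; summing logarithms, taking expectations, and using that conditionally on $A_1,\dots,A_n$ and $F_i$ one has $F^{(n)}\sim\beta_n$, gives $\mathbb{E}(D(\beta_n\|\gamma_n))=n\kappa_i$ for every $n$. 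It now suffices to prove $\mathbb{E}(D(\beta_n\|\gamma_n))\le n(\chi_i-\chi_{i+1})+o(n)$ and then divide by $n$ and let $n\to\infty$. The measure $\beta_n$ is $\nu_{F_i}$ transported by the circle diffeomorphism obtained by restricting $A_n\cdots A_1$ to the leaf, whose Jacobian cocycle has mean contraction rate $\chi_i-\chi_{i+1}$ by the first paragraph and the ergodic theorem; the estimate then follows from a scale-by-scale comparison of $\beta_n$ with $\gamma_n$ near $F^{(n)}$, using the elementary inequality $\int -\log m(B_r(x))\,dm(x)\le\log(1/r)+O(1)$ valid for any probability $m$ on a circle (partition it into $\asymp 1/r$ arcs) --- the incarnation of ``a measure on a $1$-dimensional space has dimension at most $1$''. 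This last step is the main obstacle: the conditional measures $\nu_{F_i}$ carry no a priori regularity, so controlling $D(\beta_n\|\gamma_n)$ requires combining the bounded-distortion estimates available for compositions of projective maps of the line with the identification of $\nu_{F_i}$ --- via the uniqueness hypothesis --- as the attractor of the fibrewise random dynamics on the circle. The whole scheme is modelled on the $\mathrm{SL}_2(\mathbb{R})$ arguments of Ledrappier and of Hochman--Solomyak.
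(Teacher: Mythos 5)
Your overall plan — realise the leaves as projective lines, compute the Jacobian, lower bound via relative entropy, upper bound via iteration — diverges from the paper's, which establishes finiteness of the conditional mutual information $I(A,AF\mid AF_i)$ by smoothing $A$ with a Brownian motion on the stabiliser of $AF_i$, proving an exact identity $\kappa_i = \chi_i-\chi_{i+1}$ for the smoothed pair, and passing to the limit using semi-continuity of conditional mutual information. The Gelfand--Yaglom--Perez theorem then delivers absolute continuity and the inequality simultaneously. Two steps of your proposal do not hold up.

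\textbf{Absolute continuity.} Your argument is that the barycentre identity $\nu_{AF_i}=\E{A\nu_{F_i}\mid AF_i}$ implies $A\nu_{F_i}\ll\nu_{AF_i}$, checked on a countable generating algebra. This inference is false on both counts. First, a component of a mixture need not be absolutely continuous with respect to the mixture: if $U$ is uniform on $[0,1]$, then $\E{\delta_U}$ is Lebesgue measure, but $\delta_U$ is singular with respect to Lebesgue, and the barycentre identity is exactly of this form. Second, absolute continuity is not determined by a countable generating algebra: a Dirac mass at an irrational point ``looks'' absolutely continuous with respect to Lebesgue on the dyadic algebra because the hypothesis of the implication is never activated there. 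The absolute continuity statement is genuinely part of the theorem, not a formal consequence of stationarity; in the paper it is extracted from the finiteness of $I(A,AF\mid AF_i)$, which in turn requires the whole approximation machinery. You cannot dispatch it in one line.

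\textbf{Upper bound.} Your scheme $\E{D(\beta_n\|\gamma_n)}=n\kappa_i\le n(\chi_i-\chi_{i+1})+o(n)$ is the right target, but the claimed ``scale-by-scale comparison'' is a gesture, not an argument. The inequality $\int -\log m(B_r(x))\,dm(x)\le\log(1/r)+O(1)$ bounds a ball-measure quantity, not a relative entropy $D(\beta_n\|\gamma_n)$; converting one into the other requires, at minimum, control of $\gamma_n$ at the scales where $\beta_n$ lives, and $\gamma_n=\nu_{F^{(n)}_i}$ has no a priori regularity — you acknowledge this as ``the main obstacle'' and do not resolve it. The bounded-distortion estimates for one composition of projective maps do not control the shape of the conditional measures $\nu_{F_i}$, which are the whole point. (The paper circumvents this by working with the smoothed conditionals $\nu_{t,F_i}$ in Lemma \ref{kakutaniargument}, which \emph{do} have a density bounded above and below, so that the Jacobian computation of Lemma \ref{grassmannjacobian} applies with an $O(1)$ error; only then is the limit $t\to 0$ taken, losing equality to an inequality.)

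What you do have correctly: the leaf Jacobian formula agrees with Lemma \ref{grassmannjacobian}; the identification of the conditional law of $AF$ given $(A,F_i)$ as $A\nu_{F_i}$ is right and is used in the paper's Lemma \ref{entropyandmutualinformationlemma}; and granting absolute continuity, the nonnegativity of $\kappa_i$ and the characterisation of $\kappa_i=0$ by Gibbs' inequality are fine. But the two hard points — absolute continuity and the upper bound — are where the theorem lives, and neither is established.
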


\begin{theorem}[Dimension of conditional measures]\label{dimensiontheorem}
 If \(\nu\) is ergodic, is the unique stationary probability on \(\Flags(\R^d)\) which projects to \(\nu_i\), and \(\kappa_i > 0\), then almost surely \(\nu_{F_i}\) is exact dimensional and
 \[\dim(\nu_{F_i})= \frac{\kappa_i}{\chi_i - \chi_{i+1}}.\]
\end{theorem}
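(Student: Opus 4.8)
\emph{Proof plan.} The plan is to realize the conditional measures \(\nu_{F_i}\) as the family of conditional measures along the one dimensional leaves of a foliation that the random walk contracts, and to compute \(\dim(\nu_{F_i})\) as the ratio of an entropy to the contraction exponent along the leaves, in the spirit of Hochman--Solomyak and of Ledrappier--Young theory. First I would pass to the skew product: with \(\Omega = \GL(\R^d)^{\N}\), \(\P = \mu^{\otimes\N}\), shift \(T\), and coordinate maps \(A_k\), the transformation \(U(\omega,F) = (T\omega, A_1(\omega)F)\) preserves \(\P\times\nu\) and is ergodic since \(\nu\) is. Over \(F_i \in \Flags_i(\R^d)\) the fibre of the forgetful map is the projective line \(C(F_i) = \mathbb{P}(S_{i+1}/S_{i-1})\), a circle with marked point \(S_i\), and each \(A \in \GL(\R^d)\) induces a projective isomorphism \(\phi_A \colon C(F_i) \to C(AF_i)\); all balls below are taken in \(C(F_i)\) and \(S_i\) is viewed as a point of it. Two cocycles over \(U\) control the geometry. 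The derivative cocycle \(\theta(\omega,F) = \log\|D\phi_{A_1}(S_i)\|\) satisfies the elementary identity \(\theta(\omega,F) = \big(\log|\det_{S_{i+1}}(A_1)| - \log|\det_{S_i}(A_1)|\big) - \big(\log|\det_{S_i}(A_1)| - \log|\det_{S_{i-1}}(A_1)|\big)\), so by the Birkhoff theorem \(\tfrac1n\sum_{k=0}^{n-1}\theta(U^k(\omega,F)) \to \chi_{i+1}-\chi_i\) almost surely; writing \(\gamma := \chi_i - \chi_{i+1}\), the hypothesis \(\kappa_i > 0\) and Theorem~\ref{gaptheorem} give \(\gamma > 0\), so \(g_n := \phi_{A_n}\circ\cdots\circ\phi_{A_1}\) contracts near \(S_i\) at the rate \(\|Dg_n(S_i)\| = e^{-\gamma n + o(n)}\). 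The Radon--Nikodym cocycle \(\rho(\omega,F) = \log\frac{dA_1\nu_{F_i}}{d\nu_{A_1 F_i}}(A_1 F)\) is well defined by Theorem~\ref{gaptheorem} (uniqueness of the stationary measure over \(\nu_i\) gives \(A\nu_{F_i}\ll\nu_{AF_i}\) almost surely), the Birkhoff theorem gives \(\tfrac1n\sum_{k=0}^{n-1}\rho(U^k(\omega,F))\to\kappa_i\), and the chain rule yields \(\frac{d(g_n)_*\nu_{F_i}}{d\nu_{F_i^{(n)}}}(S_i^{(n)}) = \exp\!\big(\sum_{k=0}^{n-1}\rho(U^k(\omega,F))\big) = e^{\kappa_i n + o(n)}\), where \(F^{(n)} = A_n\cdots A_1 F\) and \(S_i^{(n)} = g_n(S_i)\).

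The heart of the proof is the local comparison estimate
\[
\nu_{F_i}\!\big(B_r(S_i)\big) \;=\; e^{o(n)}\;\exp\!\Big(\sum_{k=0}^{n-1}\rho(U^k(\omega,F))\Big)\;\nu_{F_i^{(n)}}\!\big(B_{r\,e^{-\gamma n + o(n)}}(S_i^{(n)})\big),
\]
valid for \(\P\times\nu\)-almost every \((\omega,F)\), every \(n\), and every sufficiently small \(r\), with the error terms \(o(n)\) uniformly in the relevant range of \(r\). I would obtain it by transporting the restriction \(\nu_{F_i}|_{B_r(S_i)}\) forward by \(g_n\): since \(S_i^{(n)}\) is precisely the attracting direction of \(A_n\cdots A_1\) on \(S_{i+1}/S_{i-1}\), a bounded-distortion (Lyapunov-neighbourhood type) argument shows that \(g_n\) carries \(B_r(S_i)\) onto an arc which is \(B_{re^{-\gamma n}}(S_i^{(n)})\) up to a subexponential factor in the radius, and then one compares densities using the chain rule together with Besicovitch's differentiation theorem to control the oscillation of \(\frac{d(g_n)_*\nu_{F_i}}{d\nu_{F_i^{(n)}}}\) on such arcs for \(\nu_{F_i^{(n)}}\)-typical centres. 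I expect this step --- in particular the uniform distortion bound for the iterated projective maps near their attracting orbit and the control of the density oscillation --- to be the main obstacle; it is the analogue of the delicate scale-by-scale estimates of Hochman and Solomyak, and it is where ergodicity, uniqueness over \(\nu_i\), and (through non-atomicity of \(\nu_{F_i}\), which \(\kappa_i > 0\) guarantees) the full strength of the hypotheses is used.

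Granting the estimate, the conclusion is a short self-consistency computation. First, \(\lowerdim_{S_i}(\nu_{F_i})\) and \(\upperdim_{S_i}(\nu_{F_i})\) depend only on \(F\) and are \(U\)-invariant up to null sets: since \(\phi_{A_1}\) is a diffeomorphism and, by Theorem~\ref{gaptheorem}, \(A_1\nu_{F_i}\ll\nu_{A_1F_i}\) with density positive and finite \(A_1\nu_{F_i}\)-almost everywhere, Besicovitch's differentiation theorem gives \(\lowerdim_{x}(\nu_{F_i}) = \lowerdim_{A_1 x}(\nu_{A_1F_i})\) for \(\nu_{F_i}\)-a.e.\ \(x\), and likewise for the upper dimension. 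By ergodicity each is therefore almost surely equal to a constant, say \(\underline d\) and \(\overline d\). Applying the estimate with \(n = \lfloor c\log(1/r)\rfloor\) for fixed \(c > 0\), taking logarithms, dividing by \(\log r\), letting \(r\to 0\), and using the two Birkhoff limits together with the equidistribution of \(U^n(\omega,F)\) (with the usual care to make the scale estimates uniform along the orbit), one gets \(\overline d \le \kappa_i/\gamma\); running the same computation with \((\omega,F)\) and \(U^n(\omega,F)\) exchanged --- equivalently, solving the displayed estimate for \(\nu_{F_i^{(n)}}(B_\rho(S_i^{(n)}))\) --- gives \(\underline d \ge \kappa_i/\gamma\). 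Hence \(\underline d = \overline d = \kappa_i/\gamma\), so \(\nu_{F_i}\) is exact dimensional with \(\dim(\nu_{F_i}) = \kappa_i/(\chi_i - \chi_{i+1})\). Alternatively, one could first invoke the exact-dimensionality of conditional measures of an ergodic invariant measure along a contracted measurable foliation, reducing the role of the estimate to the identification of the common value.
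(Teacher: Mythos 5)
Your proposal correctly isolates the two cocycles that drive the answer---the derivative cocycle \(\theta\) (whose Birkhoff average is \(\chi_{i+1}-\chi_i=-\gamma\)) and the Radon--Nikodym cocycle \(\rho\) (whose average is \(\kappa_i\))---and the heuristic self-consistency computation at the end does produce the formula \(\kappa_i/\gamma\). The overall architecture (contraction toward \(S_i^{(n)}\), transport of measure along the orbit, and an ergodic-theoretic identification of the dimension) is the right skeleton. However, as written the proposal does not constitute a proof: you yourself flag the ``local comparison estimate'' as the main obstacle and then sketch only a hope of proving it via ``bounded-distortion plus Besicovitch differentiation.'' That is precisely the step that fails without a substantially different idea.

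Concretely, there are two genuine gaps. First, ``control of the oscillation of \(\frac{d(g_n)_*\nu_{F_i}}{d\nu_{F_i^{(n)}}}\) on arcs'' cannot be extracted from Besicovitch differentiation: the Lebesgue density theorem gives almost everywhere convergence of averages to the pointwise value at a \emph{fixed} time, but you need uniformity over the moving orbit point \(S_i^{(n)}\) at a scale that shrinks with \(n\), and differentiation provides no rate. The paper circumvents this entirely: it never controls oscillation of the density. Instead it works with a carefully chosen stationary family of intervals \(I_n = S^1\setminus B_{\frac12\dist(x_n,y_n)}(y_n)\) anchored at the \emph{stable} Oseledets direction \(y_n\) (you do not introduce this direction, but it is essential; the paper obtains it in Lemma~\ref{oseledetslemma}), and then applies Maker's ergodic theorem for moving averages to the quantities \(\log\bigl(T_{k-1}\nu_{k-1}(J_{k,n})/\nu_k(J_{k,n})\bigr)\). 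The price for Maker's theorem is \(L^1\)-integrability of the supremum \(\sup_n|X_{k,n}|\), which is precisely where the real work happens: it rests on the \(x\log x\)-integrability of \(f_k=dT_{k-1}\nu_{k-1}/d\nu_k\) (a consequence of \(\kappa_i<\infty\) from Theorem~\ref{gaptheorem}), a Besicovitch-covering maximal inequality, a Neveu-style Orlicz estimate for the maximal function, and a Chung-type argument for the lower bound (Lemmas~\ref{maximalinequality}--\ref{dominationlemma}). None of this appears in your sketch, and I do not see how to replace it with a ``bounded distortion'' estimate: the Jacobian of the projective map blows up near \(y_n\), so no uniform distortion bound on fixed-radius balls around \(S_i\) can be true without quantitative control of \(\dist(x_n,y_n)\) along the orbit---which again drags in exactly the moving-average machinery.

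Second, the closing ``self-consistency'' argument is also underpowered. You first establish that \(\lowerdim\) and \(\upperdim\) are almost surely constants (correct, via ergodicity and bi-Lipschitz invariance of local dimension under absolutely continuous change of measure), and then want to ``apply the estimate with \(n\sim c\log(1/r)\) and let \(r\to 0\), using equidistribution with the usual care.'' But local dimension is a \(\liminf/\limsup\) without any rate, so evaluating it at the orbit point \(S_i^{(n)}\) at scale \(re^{-\gamma n}\), with both \(n\) and the scale tied to \(r\), does not follow from constancy almost everywhere; one needs uniformity of the defining limits along a positive-frequency subsequence of orbit times. The paper manufactures exactly this uniformity: Lemma~\ref{stationaryintervalslemma} shows \(\nu_{-n}(I_{-n})\ge 1/2\) with frequency at least \(1/2\), and that density-one set of good times is what makes the length bound (Lemma~\ref{lengthlemma}) and the probability bound (Lemma~\ref{probabilitylemma}) combinable at all scales in the final computation. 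Your ``Egorov-type'' care is not a detail to be elided; it is structurally where the paper spends its second Maker-theorem application. Finally, the alternative route you mention---invoking a general exact-dimensionality theorem for conditional measures along a contracted foliation---would itself need to be proved; no off-the-shelf Ledrappier--Young statement applies to these fiber measures without essentially reproducing the argument above.
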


In the case \(d = 2\) both theorems above are known. A proof of Theorem \ref{gaptheorem} in this case was first given in \cite{ledrappier}.  In the same work the formula for dimension in Theorem \ref{dimensiontheorem} is shown to hold for a slightly different notion of dimension.   The exact dimensionality of stationary measures when \(d = 2\) was first proved in \cite{hochman-solomyak2017} and this implies the formula above for the same notion of dimension we use here.

Theorem \ref{gaptheorem} implies that the Lyapunov spectrum is simple (i.e. all exponents are different) if there does not exist a family of conditional probabilities \(F_i \mapsto \nu_{F_i}\) satisfying \(A\nu_{F_i} = \nu_{AF_i}\) for \(\mu\) almost every \(A\). This suggests a connection to criteria for simplicity dating back to \cite{goldsheid-margulis} and \cite{guivarch-raugi} though we do not explore this issue further here.

\subsection{Acknowledgment}

I am grateful to François Ledrappier for many helpful discussions.  I would also like to thank an anonymous referee for pointing out an error in a previous version of the proof of theorem 1, and for helping improve the general quality of the article.

\part{Entropy, Mutual information, and Lyapunov exponent gaps}

\section{Entropy and mutual information}

We will define below \(I(A,AF|AF_i)\) the conditional mutual information between \(A\) and \(AF\) given \(AF_i\).  This is a non-negative \(\sigma(AF_i)\)-measurable random variable which may take the value \(+\infty\).

The purpose of this section is to prove that:
\begin{lemma}[Entropy and mutual information]\label{entropyandmutualinformationlemma}
If \(I(A,AF|AF_i) < +\infty\) almost surely then \(A\nu_{F_i} \ll \nu_{AF_i}\) almost surely and \(\kappa_i = \E{I(A,AF|AF_i)}\).

Conversely, if \(A\nu_{F_i} \ll \nu_{AF_i}\) almost surely then \(\kappa_i = \E{I(A,AF|AF_i)}\) whether \(\kappa_i\) is finite or not.
\end{lemma}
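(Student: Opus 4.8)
The plan is to first pin down the definition of $I(A,AF\mid AF_i)$ and then recognize $\kappa_i$ as the expectation of a conditional relative entropy, after which everything reduces to a disintegration-of-measures bookkeeping exercise. The natural definition is
\[
I(A,AF\mid AF_i) = \int \log\left(\frac{d\lambda_{AF_i}}{d(\mu_{AF_i}\otimes\nu_{AF_i})}\right)\,d\lambda_{AF_i},
\]
where, conditionally on $AF_i=y$, $\lambda_y$ is the joint law of $(A,AF)$, $\mu_y$ is the conditional law of $A$, and $\nu_y$ is the conditional law of $AF$ (which is exactly $\nu_{AF_i}$ in the notation of the statement); by convention this is $+\infty$ when $\lambda_y$ is not absolutely continuous with respect to the product. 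The first observation I would record is that, conditionally on $AF_i$, the conditional law of $AF$ given $A$ is precisely $A\nu_{F_i}$: this is because $F$ and $A$ are independent, so the conditional law of $F$ given $(A,F_i)$ is $\nu_{F_i}$, and pushing forward by $A$ (which is measurable with respect to the conditioning) gives $A\nu_{F_i}$. Hence, writing $\mu_y(dA)$ for the conditional law of $A$ given $AF_i=y$, the joint conditional law disintegrates as $d\lambda_y(A,z) = d\mu_y(A)\,dA\nu_{F_i}(z)$, i.e.\ it is an average of the measures $A\nu_{F_i}$ over $A\sim\mu_y$.

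With that disintegration in hand, the finiteness of $I(A,AF\mid AF_i)$ forces, for $\mu_y$-a.e.\ $A$, that $A\nu_{F_i}\ll \nu_{AF_i}$ (an average of measures is absolutely continuous with respect to a fixed measure only if almost every term is), which is the first claimed conclusion; integrating over $y$ gives $A\nu_{F_i}\ll\nu_{AF_i}$ almost surely in the unconditioned sense. Then I would compute the density of $\lambda_y$ against $\mu_y\otimes\nu_y$ by the chain rule for Radon–Nikodym derivatives:
\[
\frac{d\lambda_y}{d(\mu_y\otimes\nu_{AF_i})}(A,z) = \frac{dA\nu_{F_i}}{d\nu_{AF_i}}(z).
\]
Substituting this into the integral defining $I$ and then taking the overall expectation (integrating over $y$ against the law of $AF_i$) collapses the nested integrals, by the tower property, into
\[
\E{I(A,AF\mid AF_i)} = \E{\log\left(\frac{dA\nu_{F_i}}{d\nu_{AF_i}}(AF)\right)} = \kappa_i,
\]
since $AF$ is distributed, conditionally on $(A,AF_i=y)$, according to $A\nu_{F_i}$. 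This proves the first assertion.

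For the converse, I would argue in the same way but more carefully about integrability, since now $\kappa_i$ may be $+\infty$ and the conditional mutual information a priori may too. Starting from the hypothesis $A\nu_{F_i}\ll\nu_{AF_i}$ a.s., I first check that this implies $\lambda_y\ll\mu_y\otimes\nu_{AF_i}$ for a.e.\ $y$ with the density above — this is a routine Fubini/disintegration verification that the product-measure integral of $\frac{dA\nu_{F_i}}{d\nu_{AF_i}}$ reproduces $\lambda_y$. Then $I(A,AF\mid AF_i)$ is genuinely the conditional mutual information (finite or not), and by the nonnegativity of relative entropy (Gibbs' inequality, valid with values in $[0,+\infty]$) we may apply Tonelli's theorem to the nonnegative integrand $\log^{+}$ part and Jensen/Gibbs to control the $\log^{-}$ part, so that the identity $\E{I(A,AF\mid AF_i)} = \kappa_i$ holds as an equality in $[0,+\infty]$ regardless of finiteness. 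The main obstacle I anticipate is precisely this measure-theoretic care: ensuring the iterated conditional densities multiply correctly and that the exchange of integration order is legitimate when $\kappa_i=+\infty$, i.e.\ controlling the negative part of $\log\frac{dA\nu_{F_i}}{d\nu_{AF_i}}$ uniformly enough (via Gibbs' inequality applied fiberwise) that no $\infty-\infty$ ambiguity arises. The algebraic heart — that the mutual information density is exactly the Radon–Nikodym cocycle appearing in $\kappa_i$ — is a direct consequence of the independence of $A$ and $F$ and the chain rule, and should be short.
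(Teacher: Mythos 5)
Your approach is essentially the paper's: both proofs reduce the lemma to identifying the three conditional distributions given $AF_i$ — the law of $A$, the law of $AF$ (which by stationarity is $\nu_{AF_i}$), and, crucially, the law of $AF$ given $\sigma(A,AF_i)=\sigma(A,F_i)$, which by independence of $A$ and $F$ is $A\nu_{F_i}$ — and then applying the Gelfand--Yaglom--Perez identification of mutual information with the log Radon--Nikodym density. Two caveats are worth flagging.

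First, you take as the \emph{definition} of $I(A,AF\mid AF_i)$ the Radon--Nikodym integral with the convention that it equals $+\infty$ whenever $\lambda_y\not\ll \mu_y\otimes\nu_{AF_i}$. The paper's definition is the supremum over finite Borel partitions; that this supremum is finite if and only if absolute continuity holds, and then equals the integral of the log density, \emph{is} the Gelfand--Yaglom--Perez theorem. Your route is fine, but it silently packs a theorem into a ``definition''; if the lemma is to interface cleanly with the rest of the paper (which uses the partition definition, e.g.\ for the semicontinuity property), you should state that you are invoking GYP rather than adopting a nonstandard definition.

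Second, the parenthetical heuristic ``an average of measures is absolutely continuous with respect to a fixed measure only if almost every term is'' is false as stated: $\int_0^1 \delta_x\,dx$ is Lebesgue measure, yet no $\delta_x$ is absolutely continuous. What actually makes your step work is not the absolute continuity of the average $\int A\nu_{F_i}\,d\mu_y(A)=\nu_{AF_i}$ with respect to itself (trivial and uninformative), but the absolute continuity of the joint law $\lambda_y$ with respect to the \emph{product} $\mu_y\otimes\nu_{AF_i}$; a Fubini disintegration of the product then forces the fiber measures $A\nu_{F_i}$ to be absolutely continuous with respect to $\nu_{AF_i}$ for $\mu_y$-a.e.\ $A$. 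That product-absolute-continuity is exactly what GYP extracts from finiteness of the conditional mutual information, so your chain of reasoning is correct — just replace the heuristic with the disintegration argument you in fact use later.
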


This result reduces the problem of showing that \(A\nu_{F_i} \ll \nu_{AF_i}\) almost surely and that \(0 \le \kappa_i < +\infty\) to that of bounding the conditional mutual information between \(A\) and \(AF\) given \(AF_i\).

A general reference covering mutual information including Dobrushin's theorem and the Gelfand-Yaglom-Perez theorem is \cite{pinsker}.

\subsection{Conditional mutual information}

\subsubsection{Mutual information}

Let \(X\) and \(Y\) be random elements of two Polish spaces \(\mathcal{X}\) and \(\mathcal{Y}\), and denote \(\mu_X,\mu_Y,\mu_{(X,Y)}\) the distribution of \(X\), \(Y\), and \((X,Y)\) respectively.

The mutual information between \(X\) and \(Y\) is defined by
\[I(X,Y) = \sup \sum\limits_{A \in P}\log\left(\frac{\mu_{(X,Y)}(A)}{(\mu_X \times \mu_Y)(A)}\right)\mu_{(X,Y)}(A)\]
where the supremum is over all finite partitions \(P\) of \(\mathcal{X}\times \mathcal{Y}\) into Borel sets.

Directly from the definition one sees that \(I(X,Y) = I(Y,X)\).   

By Jensen's inequality \(0 \le I(X,Y) \le +\infty\) with equality to \(0\) if and only if \(X\) and \(Y\) are independent.   If \(X\) takes countably many values and has finite entropy \(H(X)\) in the sense of \cite{shannon}  one has \(I(X,Y) \le H(X)\).

It was shown in \cite{dobrushin} that \(I(X,Y)\) is the supremum over any sequence of partitions which generate the Borel \(\sigma\)-algebra in \(\mathcal{X}\times \mathcal{Y}\) (see also \cite[Lemma 7.3]{gray}).  This has the following important corollary:
\begin{proposition}[Semi-continuity of mutual information]\label{semicontinuityproposition}
 If \(\lim\limits_{n \to +\infty}(X_n,Y_n) = (X,Y)\) in the sense of distributions then \(I(X,Y) \le \liminf\limits_{n \to +\infty} I(X_n,Y_n)\).
\end{proposition}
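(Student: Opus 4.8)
The plan is to reduce the statement to Dobrushin's characterization of mutual information as a supremum over a single generating sequence of partitions, which is cited just above the proposition. The key point is that for a fixed partition the quantity $\sum_{A \in P}\log\left(\frac{\mu_{(X,Y)}(A)}{(\mu_X \times \mu_Y)(A)}\right)\mu_{(X,Y)}(A)$ is, modulo the usual care with boundaries, continuous in the distribution of $(X,Y)$; taking a supremum of lower-semicontinuous functions then yields lower semicontinuity of $I$. The main technical nuisance — and I expect this to be the only real obstacle — is that convergence in distribution only controls the measure of sets whose boundary is null, so one cannot use an arbitrary partition into Borel sets; one must choose a generating sequence of partitions made of continuity sets.

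First I would fix a countable dense sequence of points in $\mathcal{X} \times \mathcal{Y}$ and, for each $n$, build a finite partition $P_n$ of a large ball into pieces of diameter at most $1/n$ whose boundaries have $\mu_{(X,Y)}$-measure zero and also $(\mu_X \times \mu_Y)$-measure zero (possible by perturbing radii/thresholds, since each measure assigns positive mass to only countably many parallel hyperplanes or spheres), throwing the complement of the ball in as one extra cell whose mass can be made uniformly small. Arrange these so that $\bigcup_n \sigma(P_n)$ generates the Borel $\sigma$-algebra on $\mathcal{X} \times \mathcal{Y}$. By Dobrushin's theorem (the result of \cite{dobrushin}, see also \cite[Lemma 7.3]{gray}) we then have
\[
I(X,Y) = \sup_n \sum_{A \in P_n}\log\left(\frac{\mu_{(X,Y)}(A)}{(\mu_X \times \mu_Y)(A)}\right)\mu_{(X,Y)}(A),
\]
and the same formula with the same partitions holds for each $(X_n,Y_n)$ only if those partitions are also continuity partitions for the limiting pair — which is where the careful choice above matters.

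Next I would note that since $(X_n,Y_n) \to (X,Y)$ in distribution, also $X_n \to X$ and $Y_n \to Y$ in distribution, hence $\mu_{X_n} \times \mu_{Y_n} \to \mu_X \times \mu_Y$ weakly; and since every cell $A \in P_k$ is a continuity set for both $\mu_{(X,Y)}$ and $\mu_X \times \mu_Y$, we get $\mu_{(X_n,Y_n)}(A) \to \mu_{(X,Y)}(A)$ and $(\mu_{X_n} \times \mu_{Y_n})(A) \to (\mu_X \times \mu_Y)(A)$ for each such $A$. Therefore, for each fixed $k$,
\[
\sum_{A \in P_k}\log\left(\frac{\mu_{(X,Y)}(A)}{(\mu_X \times \mu_Y)(A)}\right)\mu_{(X,Y)}(A) = \lim_{n \to \infty} \sum_{A \in P_k}\log\left(\frac{\mu_{(X_n,Y_n)}(A)}{(\mu_{X_n} \times \mu_{Y_n})(A)}\right)\mu_{(X_n,Y_n)}(A) \le \liminf_{n \to \infty} I(X_n,Y_n),
\]
where the finiteness of the sum (the partition is finite) makes the limit legitimate and the final inequality is the definition of $I(X_n,Y_n)$ as a supremum over partitions. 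Taking the supremum over $k$ on the left and invoking Dobrushin's formula above gives $I(X,Y) \le \liminf_{n \to \infty} I(X_n,Y_n)$, which is the claim. The one step requiring genuine care, as flagged, is the construction of the generating sequence of continuity partitions; everything else is bookkeeping with weak convergence.
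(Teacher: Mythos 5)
Your proof is correct and takes the same route the paper implicitly takes: the paper offers no argument beyond citing Dobrushin's theorem and calling the proposition a corollary, and your write-up supplies exactly the missing bookkeeping (construct a generating sequence of partitions whose cells are simultaneously continuity sets for \(\mu_{(X,Y)}\) and \(\mu_X\times\mu_Y\), invoke the Portmanteau theorem cell-by-cell, and pass to the supremum). Two small points worth noting: you should take the partitions to be nested (replace \(P_k\) by \(P_1\vee\cdots\vee P_k\); intersections of continuity sets are continuity sets, so this is harmless), and since \(\mathcal X\times\mathcal Y\) is merely Polish you should replace the ``large ball'' by a large compact set obtained from tightness, with its boundary made null by a small dilation — but these are cosmetic and the argument is sound.
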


It was shown in \cite{gelfand-yaglom} and \cite{perez} that if \(I(X,Y) < +\infty\) then \(\mu_{(X,Y)} \ll \mu_X \times \mu_Y\) and 
\[I(X,Y) = \E{\log\left(\frac{d\mu_{(X,Y)}}{d(\mu_X\times \mu_Y)}(X,Y)\right)}.\]

Conversely, if \(\mu_{(X,Y)} \ll \mu_X \times \mu_Y\) then
\[I(X,Y) = \E{\log\left(\frac{d\mu_{(X,Y)}}{d(\mu_X\times \mu_Y)}(X,Y)\right)},\]
whether the right hand side is finite or not.

These results are usually called the Gelfand-Yaglom-Perez Theorem.

In our context, when \(d = 2\), this yields the following result:
\begin{proposition}\label{twodimensionalinformation}
If \(d = 2\) and \(I(A,AF) < \infty\) then \(A\nu \ll \nu\) almost surely and \(0 \le \kappa = \E{\log\left(\frac{dA\nu}{d\nu}(AF)\right)} = I(A,AF) < +\infty\).   

Conversely, if \(A\nu \ll \nu\) almost surely then \(\kappa = I(A,AF)\) whether \(I(A,AF)\) is finite or not.
\end{proposition}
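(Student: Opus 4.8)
The plan is to deduce this directly from the Gelfand--Yaglom--Perez theorem recalled just above together with a disintegration argument identifying the Radon--Nikodym derivative of $\mu_{(A,AF)}$ with respect to $\mu_A \times \mu_{AF}$ in terms of the conditional densities $\frac{dA\nu}{d\nu}$. The key observation is that since $d = 2$ there is no missing subspace to keep track of: $\Flags_i(\R^2)$ is trivial for the only relevant $i$, so $\nu_{F_i}$ is simply $\nu$ itself and $\kappa_i = \kappa$. Thus the conditional mutual information $I(A,AF|AF_i)$ collapses to the ordinary mutual information $I(A,AF)$, and the statement is exactly the $d=2$ specialization of Lemma \ref{entropyandmutualinformationlemma}; but since we want a self-contained $d=2$ statement before developing conditional mutual information machinery, I would argue it directly.

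First I would set up the disintegration: write $\mu_{(A,AF)}$ disintegrated over the first coordinate $A$, which by independence of $A$ and $F$ gives conditional law $A\nu$ (the pushforward of $\nu$ under $A$) on the second coordinate; and disintegrate $\mu_A \times \mu_{AF}$ over the first coordinate, which gives conditional law $\mu_{AF} = \nu$ (since $AF \eqd F$ by stationarity, its law is $\nu$) on the second coordinate, independently of $A$. Now suppose $I(A,AF) < +\infty$. By Gelfand--Yaglom--Perez, $\mu_{(A,AF)} \ll \mu_A \times \mu_{AF}$, and the Radon--Nikodym derivative depends only on the second coordinate through the disintegration fibers: for $\mu_A$-a.e.\ $A$ we get $A\nu \ll \nu$ with $\frac{dA\nu}{d\nu} = \frac{d\mu_{(A,AF)}}{d(\mu_A \times \mu_{AF})}(A,\cdot)$. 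Taking logarithms and integrating against $\mu_{(A,AF)}$ yields
\[
I(A,AF) = \E{\log\left(\frac{d\mu_{(A,AF)}}{d(\mu_A \times \mu_{AF})}(A,AF)\right)} = \E{\log\left(\frac{dA\nu}{d\nu}(AF)\right)} = \kappa,
\]
and non-negativity $\kappa \ge 0$ follows from $I(A,AF) \ge 0$ (Jensen). For the converse, if one assumes directly that $A\nu \ll \nu$ a.s., the same fiberwise identification shows $\mu_{(A,AF)} \ll \mu_A \times \mu_{AF}$ with the stated density, and the second half of Gelfand--Yaglom--Perez gives $I(A,AF) = \E{\log(\cdots)} = \kappa$ whether finite or not.

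The main obstacle, and the only point requiring care, is the measurable disintegration step: justifying that the Radon--Nikodym derivative $\frac{d\mu_{(A,AF)}}{d(\mu_A\times\mu_{AF})}$ on the product can be chosen to agree, for $\mu_A$-almost every fixed $A$, with $\frac{dA\nu}{d\nu}$ as a function of the second coordinate, and that $A \mapsto \frac{dA\nu}{d\nu}$ can be selected Borel-measurably so that the expectation $\E{\log(\frac{dA\nu}{d\nu}(AF))}$ is well-defined. This is a standard disintegration-of-measures argument on Polish spaces (using that the first marginals coincide and that absolute continuity on a product with a fixed first marginal is equivalent to fiberwise absolute continuity of the disintegrations a.e.), but it must be invoked explicitly; after that, everything reduces to the already-quoted Gelfand--Yaglom--Perez theorem and Jensen's inequality.
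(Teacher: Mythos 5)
Your proposal is correct and follows essentially the same route as the paper: disintegrate both $\mu_{(A,AF)}$ and $\mu_A \times \mu_{AF}$ over the first coordinate to identify the Radon--Nikodym derivative $\frac{d\mu_{(A,AF)}}{d(\mu_A \times \mu_{AF})}(a,x)$ with $\frac{da\nu}{d\nu}(x)$, and then apply the Gelfand--Yaglom--Perez theorem. The disintegration/measurability point you flag as the ``main obstacle'' is exactly what the paper's proof addresses explicitly, by testing against a countable dense family of continuous functions $h$ on the compact fiber $\flags(\R^2)$ in order to upgrade an a.e.-in-$(a,x)$ equality of densities to an a.e.-in-$a$ equality of measures $a\nu = g(a,\cdot)\nu$.
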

\begin{proof}
 The marginal distributions of \((A,AF)\) are \(\mu\) and \(\nu\) respectively.  However the conditional distribution of \(AF\) given \(A\) is \(A\nu\).  
 
 Therefore letting \(m\) be the joint distribution of \((A,AF)\) one has
 \[\int f(a,x) dm(a,x) = \int \int f(a,x) da\nu(x) d\mu(a),\]
 for all measurable functions \(f\).
 
 If \(A\nu \ll \nu\) almost surely then
 \begin{align*}
\int f(a,x) dm(a,x) &= \int \int f(a,x) \frac{da\nu}{d\nu}(x) d\nu(x) d\mu(a)
\\ & = \int f(a,x) \frac{da\nu}{d\nu}(x) d(\mu\times \nu)(a,x),  
 \end{align*}
 so that \(\frac{dm}{d(\mu \times \nu)}(a,x) = \frac{da\nu}{d\nu}(x)\) at \((\mu\times \nu)\)-almost every point \((a,x)\).
 
 On the other hand if \(m \ll (\mu \times \nu)\) then setting \(g(a,x) = \frac{dm}{d(\mu\times \nu)}(a,x)\) one has
 \[\int f(a,x) dm(a,x) = \int \int f(a,x) da\nu(x) d\mu(a) = \int \int f(a,x) g(a,x) d\nu(x) d\mu(a),\]
 for all measurable funtions \(f\).   
 
 Letting \(f(a,x) = \one{A}(a)h(x)\) where \(\one{A}\) is the indicator of an arbitrary subset \(A\) of \(\GL(\R^d)\), and \(h\) is continuous on the compact space \(\flags(\R^2)\),  one obtains that
 \[\int h(x) da\nu(x) = \int h(x) g(a,x) d\nu(x),\]
 for \(\mu\)-almost every \(a\).   Intersecting the \(\mu\)-full measure sets where this holds over a countable dense set of functions \(h\), one obtains a full measure set for \(\mu\) where 
 \(\frac{da\nu}{d\nu}(x) = g(a,x)\).
 
 Hence, the distribution of \((A,AF)\) is absolutely continuous with respect to \(\mu \times \nu\) if and only if \(A\nu \ll \nu\) almost surely and in this case the Radon-Nikodym derivative between the two at \((A,AF)\) is given by \(\frac{dA\nu}{d\nu}(AF)\).
\end{proof}

\subsubsection{Conditional mutual information}

Let \(\F\) be a \(\sigma\)-algebra of measurable sets in the probability space on which the random elements \(X\) and \(Y\) are defined.

The mutual information between \(X\) and \(Y\) conditioned on \(\F\) is the unique up to modifications on null sets random variable  \(I(X,Y|\F)\) obtained as above but using the conditional distribution of \((X,Y)\) conditioned on \(\F\).  In the case \(\F = \sigma(Z_1,Z_2,\ldots,Z_k)\) we use the notation \(I(X,Y|Z_1,Z_2,\ldots,Z_k) = I(X,Y|\F)\).

One still has \(0 \le I(X,Y|\F) = I(Y,X|\F) \le +\infty\) almost surely.   Almost sure equality to zero occurs if and only if \(X\) and \(Y\) are conditionally independent given \(\F\).

In general there is no relation between \(I(X,Y)\) and \(I(X,Y|\F)\) or even \(\E{I(X,Y|F)}\).

To see this suppose for example that \(X,Y\) are i.i.d. taking the values  \(\pm 1\) with probability \(1/2\) and \(Z = XY\), then one has \(I(X,Y) = 0\) while \(I(X,Y|Z) = \log(2)\) almost surely.

On the other hand for any Markov chain \(X_1,X_2,X_3\) one has \(I(X_1,X_3|X_2) = 0\) almost surely, and one may construct examples with \(I(X_1,X_3) > 0\).  For example, setting \(X_1 = Y_1 , X_2 = Y_1+Y_2\) and \(X_3 = Y_1+Y_2+Y_3\) where the \(Y_i\) are i.i.d. with \(\P(Y_i = \pm 1) = 1/2\) suffices.

The following semi-continuity property holds:
\begin{proposition}[Semi-continuity of conditional mutual information]\label{conditionalsemicontinuityproposition}
If the conditional distribution of \((X_n,Y_n)\) given \(\F\) converges almost surely to the conditional distribution of \((X,Y)\) given \(\F\) then \(I(X,Y|\F) \le \liminf\limits_{n \to +\infty} I(X_n,Y_n|\F)\) almost surely. 
\end{proposition}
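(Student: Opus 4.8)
The plan is to deduce the conditional statement from its unconditional counterpart, Proposition \ref{semicontinuityproposition}, by expressing conditional mutual information as the value, at the regular conditional distribution of $(X,Y)$ given $\F$, of a single functional on probability measures whose lower semi-continuity is precisely Proposition \ref{semicontinuityproposition}.

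First I would record that $I(X,Y)$ depends on $(X,Y)$ only through the joint law. Writing $\mathcal{X}\times\mathcal{Y}$ for the (Polish) product space and, for a probability $\theta$ on it, $\theta^{\mathcal{X}}$ and $\theta^{\mathcal{Y}}$ for its marginals, set
\[
 \mathcal{I}(\theta)\;=\;\sup_{P}\ \sum_{A\in P}\log\!\left(\frac{\theta(A)}{(\theta^{\mathcal{X}}\times\theta^{\mathcal{Y}})(A)}\right)\theta(A),
\]
the supremum being over finite Borel partitions $P$ of $\mathcal{X}\times\mathcal{Y}$. Then $I(X,Y)=\mathcal{I}(\mu_{(X,Y)})$, and since every weakly convergent sequence of probability measures is the sequence of laws of a convergent-in-distribution sequence of random elements, Proposition \ref{semicontinuityproposition} is exactly the statement that $\mathcal{I}$ is lower semi-continuous for the weak topology; in particular $\mathcal{I}$ is Borel. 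Likewise, by the definition of conditional mutual information recalled above, if $\omega\mapsto\kappa_\omega$ is a Borel version of the conditional distribution of $(X,Y)$ given $\F$ — so that its marginals are conditional distributions of $X$ and of $Y$ given $\F$ — then $\omega\mapsto\mathcal{I}(\kappa_\omega)$ is a version of $I(X,Y|\F)$, this map being $\F$-measurable because $\mathcal{I}$ is Borel and $\omega\mapsto\kappa_\omega$ is $\F$-measurable into the space of probabilities on $\mathcal{X}\times\mathcal{Y}$.

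Next I would fix, for each $n$, a Borel version $\omega\mapsto\kappa^n_\omega$ of the conditional distribution of $(X_n,Y_n)$ given $\F$, so that $\omega\mapsto\mathcal{I}(\kappa^n_\omega)$ is a version of $I(X_n,Y_n|\F)$, together with a version of $I(X,Y|\F)$ agreeing with $\mathcal{I}(\kappa_\cdot)$ off a null set. Discarding the countable union of these null sets and the $\F$-null set off which $\kappa^n_\omega\to\kappa_\omega$ weakly (the hypothesis of the proposition), we are left with a full-measure event on which, for each fixed $\omega$, the deterministic measures $\kappa^n_\omega$ converge weakly to $\kappa_\omega$; lower semi-continuity of $\mathcal{I}$ then gives $\mathcal{I}(\kappa_\omega)\le\liminf_n\mathcal{I}(\kappa^n_\omega)$, which is the desired inequality $I(X,Y|\F)(\omega)\le\liminf_n I(X_n,Y_n|\F)(\omega)$.

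The points requiring care are the measurability of $\omega\mapsto\mathcal{I}(\kappa_\omega)$ — which I expect to be the main, though mild, obstacle, and which is handled by lower semi-continuity of $\mathcal{I}$ — and the bookkeeping of the various almost-sure caveats built into the definitions of conditional distributions and conditional mutual information. If one preferred not to invoke Proposition \ref{semicontinuityproposition} as a black box, one could instead repeat its proof verbatim, using Dobrushin's theorem to reduce the supremum to a countable refining family of partitions and passing to the limit on each atom that is a continuity set, but now with the conditional distributions $\kappa^n_\omega,\kappa_\omega$ in place of the unconditional ones; the reduction above is however considerably shorter.
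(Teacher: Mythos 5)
Your proof is correct and takes essentially the same approach as the paper, which disposes of this proposition in one line as a direct consequence of Proposition \ref{semicontinuityproposition}; you have simply made explicit the intermediate step of viewing mutual information as a lower semi-continuous functional on joint laws and evaluating it pointwise at the regular conditional distributions.
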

\begin{proof}
This is a direct consequence of Proposition \ref{semicontinuityproposition}.
\end{proof}

The following monotonicity property follows immediately from the definition of mutual information
\[I(X,Y\vert \F) \le I(X,(Y,Z)\vert \F).\]

A more precise version of monotonicity is the following:
\begin{proposition}[Chain rule for conditional mutual information]\label{cocycleproposition}
If \(X,Y,Z\) are random elements and \(\F\) a \(\sigma\)-algebra of events of the probability space on which they are defined, then 
\[I(X,(Y,Z)\vert \F) = \E{I(X,Y \vert Z,\F)\vert \F} + I(X,Z\vert \F).\] 
\end{proposition}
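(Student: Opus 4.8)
The plan is to reduce the conditional statement to the unconditional one by working "inside" the probability space determined by $\mathcal{F}$, and to prove the unconditional identity $I(X,(Y,Z)) = \E{I(X,Y\vert Z)} + I(X,Z)$ by approximating all three random elements by finite-valued ones, where the identity becomes a purely combinatorial rearrangement of Shannon entropies. Concretely, I would first disintegrate with respect to $\mathcal{F}$: fix a regular conditional distribution of $(X,Y,Z)$ given $\mathcal{F}$, so that for $\P$-a.e.\ $\omega$ we obtain a probability space carrying random elements $X^\omega, Y^\omega, Z^\omega$ whose joint law is the $\omega$-fiber of the disintegration. Under this disintegration $I(X,(Y,Z)\vert\mathcal{F})(\omega)$ equals $I(X^\omega,(Y^\omega,Z^\omega))$, $I(X,Z\vert\mathcal{F})(\omega)$ equals $I(X^\omega,Z^\omega)$, and — this is the one point that needs care — $\E{I(X,Y\vert Z,\mathcal{F})\vert\mathcal{F}}(\omega)$ equals $\E{I(X^\omega, Y^\omega\vert Z^\omega)}$, because conditioning on $\sigma(Z)\vee\mathcal{F}$ and then integrating out $Z$ with $\mathcal{F}$ fixed is exactly conditioning on $Z^\omega$ and integrating in the $\omega$-fiber. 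So it suffices to prove the unconditional chain rule.

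For the unconditional chain rule I would use Dobrushin's theorem (as invoked in Proposition \ref{semicontinuityproposition}): pick sequences of finite measurable partitions $\mathcal{P}_n$ of $\mathcal{X}$, $\mathcal{Q}_n$ of $\mathcal{Y}$, $\mathcal{R}_n$ of $\mathcal{Z}$, each generating the respective Borel $\sigma$-algebra, and let $X_n, Y_n, Z_n$ be the corresponding discretizations. For finite-valued random variables the identity
\[
I(X_n,(Y_n,Z_n)) = \E{I(X_n,Y_n\vert Z_n)} + I(X_n,Z_n)
\]
is the classical chain rule for Shannon mutual information, a finite sum rearrangement using $H(U,V) = H(U) + H(V\vert U)$; I would cite \cite{pinsker} or \cite{gray} for it rather than rederive it. Then I pass to the limit in $n$. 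By Dobrushin's theorem, $I(X_n,Z_n) \to I(X,Z)$ monotonically, and likewise $I(X_n,(Y_n,Z_n)) \to I(X,(Y,Z))$, since the partitions $\mathcal{P}_n \times (\mathcal{Q}_n\vee\mathcal{R}_n)$ generate the Borel $\sigma$-algebra of $\mathcal{X}\times(\mathcal{Y}\times\mathcal{Z})$. The remaining term $\E{I(X_n,Y_n\vert Z_n)}$ must then converge to $\E{I(X,Y\vert Z)}$; since everything is nonnegative and the other two terms converge, monotonicity of each side in $n$ forces this, and monotone convergence identifies the limit.

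The main obstacle I anticipate is the middle term — controlling $\E{I(X_n,Y_n\vert Z_n)}$ and identifying its limit as $\E{I(X,Y\vert Z)}$. The subtlety is twofold: first, $I(X,Y\vert Z)$ is defined via the conditional law of $(X,Y)$ given $Z$, whereas $I(X_n,Y_n\vert Z_n)$ uses the conditional law given the coarser variable $Z_n$, so one needs that discretizing $Z$ does not destroy the limit; second, one needs monotonicity of $n \mapsto \E{I(X_n,Y_n\vert Z_n)}$, which follows from the monotonicity of mutual information under refinement of partitions applied fiberwise, combined with the tower property for conditional expectations. A clean way to handle this is to observe that $\E{I(X_n,Y_n\vert Z_n)} = I(X_n, Y_n \vert \sigma(Z_n)) \text{ integrated}$, and to use the already-known unconditional limits together with the sandwiching: the identity holds with an error $\varepsilon_n = I(X,(Y,Z)) - I(X_n,(Y_n,Z_n)) + I(X_n,Z_n) - I(X,Z) \to 0$, and the middle term is pinned between the two sides, so it converges and the limiting identity holds. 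If one prefers to avoid any delicacy about whether $\E{I(X,Y\vert Z)}$ could be infinite on a subset, the nonnegativity of all terms makes the extended-real-valued arithmetic unambiguous throughout.
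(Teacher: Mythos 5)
Your reduction of the conditional identity to the unconditional one by disintegrating over \(\F\) is exactly what the paper does; the paper then simply cites \cite[Corollary~7.14]{gray} for the unconditional chain rule, whereas you attempt to reprove it by discretization and Dobrushin's theorem. That attempt has a genuine gap at the point you flag yourself, namely identifying the limit of the middle term. The quantity \(\E{I(X_n,Y_n\vert Z_n)}\) is the difference \(I(X_n,(Y_n,Z_n)) - I(X_n,Z_n)\) of two sequences that each increase (by Dobrushin) to a limit, but a difference of increasing sequences need not be monotone, so the appeal to ``monotonicity of each side in \(n\)'' is unfounded. (The example \(Z = X\): refining the partition of \(Z\) alone drives \(I(X_n,Y_n\vert Z_m)\) down toward \(0\) while refining the partitions of \(X,Y\) alone drives it up, so the diagonal sequence has no a priori monotone behavior.)

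More seriously, even granting that the middle term converges --- it does, whenever \(I(X,Z)<+\infty\), because the other two terms converge --- identifying its limit with \(\E{I(X,Y\vert Z)}\) requires a form of continuity of conditional mutual information under refinement of the conditioning variable, and you only have the lower semicontinuity of Proposition~\ref{semicontinuityproposition}. Combined with martingale convergence of the conditional laws and Fatou, that gives
\(\E{I(X,Y\vert Z)} \le \lim_n \E{I(X_n,Y_n\vert Z_n)} = I(X,(Y,Z)) - I(X,Z)\),
i.e.\ only the inequality \(I(X,(Y,Z)) \ge \E{I(X,Y\vert Z)} + I(X,Z)\). The reverse inequality does not follow from semicontinuity, and there is no general upper semicontinuity to invoke: the paper's own examples immediately after this proposition show that coarsening or refining the conditioning variable can push conditional mutual information in either direction. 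To close the argument you would need a separate device --- for instance the Gelfand--Yaglom--Perez representation of all three terms as expectations of log-densities together with a factorization of the joint Radon--Nikodym derivative --- which is in effect what the cited reference supplies and what the paper avoids re-deriving.
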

\begin{proof}
 When \(\F\) is trivial this is \cite[Corollary 7.14]{gray} (notice that what said reference denotes by \(I(X,Y|Z)\) is \(\E{I(X,Y|Z)}\) in our notation).   The general case follows by applying this to the conditional distributions given \(\F\).
\end{proof}

\subsection{Proof of Lemma \ref{entropyandmutualinformationlemma}}

We will calculate the marginal distributions and the joint distribution of \((A,AF)\) conditioned on \(AF_i\) and apply the Gelfand-Yaglom-Perez Theorem as in Proposition \ref{twodimensionalinformation}.

To begin we simply let \(\mu_{AF_i}\) be the conditional distribution of \(A\) given \(AF_i\).

By stationarity of \(\nu\) the conditional distribution of \(AF\) given \(AF_i\) is \(\nu_{AF_i}\).

For the joint distribution notice that the distribution of \(AF\) conditioned on \(\sigma(A,AF_i)\) is the same as conditioned on \(\sigma(A,F_i)\) and therefore it is \(A\nu_{F_i}\).

Hence the joint conditional distribution of \((A,AF)\) given \(AF_i\) satisfies (and is determined by the equation)
\[\E{f(A,AF)|AF_i} = \int \int  f(a,x) da\nu_{F_i}(x) d\mu_{AF_i}(a)\]
for all continuous bounded \(f\).

By the Gelfand-Yaglom-Perez Theorem if \(I(A,AF|AF_i) < +\infty\) almost surely then \(A\nu_{F_i} \ll \nu_{AF_i}\) almost surely and 
\[\E{\log\left(\frac{dA\nu_{F_i}}{d\nu_{AF_i}}(AF)\right)|AF_i} < +\infty\]
almost surely.

And conversely, if \(A\nu_{F_i} \ll \nu_{AF_i}\) almost surely one has
\[I(A,AF|AF_i) = \E{\log\left(\frac{dA\nu_{F_i}}{d\nu_{AF_i}}(AF)\right)|AF_i}.\]

The result now follows by taking expectation.

\section{Proof of Theorem \ref{gaptheorem}}

In this section we will prove Theorem \ref{gaptheorem}.  

The strategy is to approximate \((A,F)\) by pairs with the property that the conditional distributions \(\nu_{F_i}\) are absolutely continuous with respect to the natural geometric measure on their domain of definition.

For the approximating pairs there is a direct relation between the distortion of the conditional measures by a linear mapping \(A\) and its determinants on certain subspaces.  This argument establishes equality between the entropy \(\kappa_i\) and the Lyapunov exponent gap \(\chi_i - \chi_{i+1}\) for the approximating pairs.

The result is then obtained by passing to the limit using the properties of conditional mutual information discussed in the previous section.  At this step equality is lost, and one obtains only an inequality between entropy and the Lyapunov exponent gap.  

An important technical issue is that one must maintain the same conditioning \(\sigma\)-algebra for the approximating pairs and the limit pair \((A,F)\) in order to apply Proposition \ref{conditionalsemicontinuityproposition}.

The idea of approximating a probability \(\mu\) by one whose stationary probability is absolutely continuous with respect to the natural geometric measure is already present in \cite[Theorem 8.6]{furstenberg1963}.

\subsection{Jacobians of linear actions on flags}

We will now briefly, for the duration of this subsection, abandon the context where \(A\) and \(F\) are random satisfying \(AF \eqd F\) in order to discuss a result for a deterministic transformation \(A\) and flag \(F\).

Denote the mapping \(F \mapsto F_i\) which removes from each flag in \(\flags(\R^d)\) its \(i\)-dimensional subspace by \(\pi_i\), and notice that the fibers \(\flags_{F_i}(\R^d) = \pi_i^{-1}(F_i)\) are 1-dimensional.   We consider on each \(\flags_{F_i}(\R^d)\) the the unique probability measure \(\eta_{F_i}\) which is invariant under the action of orthogonal transformations which fix \(F_i\).

Notice that any element \(A \in \GL(\R^d)\) leaves the family of measures \(\eta_{F_i}\) quasi-invariant.   We will need the explicit Jacobian of the action of \(A\) on this family of measures.

\begin{lemma}\label{grassmannjacobian}
 If \(A \in \GL(\R^d)\), \(F = (S_0,S_1,\ldots,S_d) \in \flags(\R^d)\), and \(i \in \lbrace 1,\ldots, d-1\rbrace\), then

 \[\frac{dA\eta_{F_i}}{d\eta_{AF_i}}(AF) = \frac{|\det_{S_i}(A)|^2}{|\det_{S_{i-1}}(A)||\det_{S_{i+1}}(A)|}.\]
\end{lemma}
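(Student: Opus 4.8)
The plan is to reduce the computation to a point, and at that point to choose a well-adapted orthonormal frame. Fix $F = (S_0,\dots,S_d)$ and $A \in \GL(\R^d)$. Since the fiber $\flags_{F_i}(\R^d)$ consists of all $i$-dimensional subspaces $S$ with $S_{i-1} \subset S \subset S_{i+1}$, it is naturally a copy of the projective line $\mathbb{P}(S_{i+1}/S_{i-1})$, and $\eta_{F_i}$ is the rotation-invariant probability on that projective line induced by the quotient inner product. The map $A$ carries $\flags_{F_i}(\R^d)$ isometrically-up-to-scaling onto $\flags_{AF_i}(\R^d) = \mathbb{P}(AS_{i+1}/AS_{i-1})$. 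So everything is really happening on a one-dimensional projective space, and the Radon–Nikodym derivative at the point $S_i/S_{i-1} \in \mathbb{P}(S_{i+1}/S_{i-1})$ is the Jacobian at that point of the induced linear map $\bar A \colon S_{i+1}/S_{i-1} \to AS_{i+1}/AS_{i-1}$ acting on projective lines, where both quotients carry the quotient inner product and the projective lines carry the associated rotation-invariant measures.

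First I would record the standard fact that for a linear isomorphism $T \colon V \to W$ between two-dimensional inner product spaces, the Jacobian of the induced map $\mathbb{P}(T)\colon \mathbb{P}(V) \to \mathbb{P}(W)$ (with respect to the rotation-invariant probabilities) at a line $\ell \in \mathbb{P}(V)$ equals $\frac{\|T|_\ell\|^2}{|\det T|}$, where $\|T|_\ell\|$ is the operator norm of $T$ restricted to $\ell$ and $|\det T|$ is computed with respect to the two inner products. This is a direct computation: parametrize $\mathbb{P}(V)$ by angle, write $T$ in orthonormal bases via its singular value decomposition, and differentiate the angle of $T v$ with respect to the angle of $v$; the singular values cancel into exactly this ratio. (In the $d=2$ case this is the classical formula for the derivative of a Möbius-type action on the circle.) Applying this with $V = S_{i+1}/S_{i-1}$, $W = AS_{i+1}/AS_{i-1}$, $T = \bar A$, and $\ell = S_i/S_{i-1}$ gives
\[
\frac{dA\eta_{F_i}}{d\eta_{AF_i}}(AF) = \frac{\|\bar A|_{S_i/S_{i-1}}\|^2}{|\det \bar A|}.
\]

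It then remains to identify the two factors with the determinant expression in the statement. For the numerator: the norm of $\bar A$ on the line $S_i/S_{i-1}$ is exactly the factor by which $A$ expands $i$-dimensional volume on $S_i$ relative to how it expands $(i-1)$-dimensional volume on $S_{i-1}$, i.e. $\|\bar A|_{S_i/S_{i-1}}\| = |\det_{S_i}(A)| / |\det_{S_{i-1}}(A)|$; this follows by choosing an orthonormal basis of $S_i$ extending one of $S_{i-1}$ and computing the Gram determinant of the image, using that the length of the projection of $Av$ onto the orthogonal complement of $AS_{i-1}$ is precisely the quotient-norm of $\bar A(v + S_{i-1})$. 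For the denominator: $\bar A$ as a map $S_{i+1}/S_{i-1} \to AS_{i+1}/AS_{i-1}$ has determinant $|\det \bar A| = |\det_{S_{i+1}}(A)| / |\det_{S_{i-1}}(A)|$ by the same multiplicativity of Jacobians along the flag $S_{i-1} \subset S_{i+1}$. Substituting these two identities into the displayed ratio and simplifying yields $\frac{|\det_{S_i}(A)|^2}{|\det_{S_{i-1}}(A)|\,|\det_{S_{i+1}}(A)|}$, as claimed.

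The main obstacle, and the only place requiring real care, is the bookkeeping between operator norms / determinants of the restriction $A|_{S_{i+1}}$, the quotient map $\bar A$, and the Jacobians $|\det_S(A)|$ along subspaces of the flag; once one fixes a single orthonormal frame adapted to $S_{i-1} \subset S_i \subset S_{i+1}$ and works in coordinates, each identity above is a short Gram-determinant computation, and the projective-line Jacobian formula is classical. I would also note at the outset that both sides of the claimed identity are continuous in $A$ and multiplicative in the sense of a cocycle over the $\GL(\R^d)$-action on flags, so it would in fact suffice to verify it for $A$ in any generating set (e.g. orthogonal transformations, where both sides are $1$, together with diagonal matrices), but the direct computation sketched above is clean enough that this reduction is not necessary.
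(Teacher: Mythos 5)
Your proposal is correct and takes essentially the same route as the paper: reduce to the two-dimensional quotient $S_{i+1}/S_{i-1}$, compute the Radon--Nikodym derivative of the projective action of a linear map between planes (the paper does this via a parallelogram/area argument; you propose the equivalent SVD/angle-differentiation computation), and then translate $\|\bar A|_{S_i/S_{i-1}}\|$ and $|\det \bar A|$ back to ratios of the $|\det_{S_j}(A)|$ using multiplicativity of Jacobians along the flag, exactly as the paper does.
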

\begin{proof}
 We begin by proving the case \(d = 2\) (this case is included in the statement of \cite[Lemma 8.8]{furstenberg1963} though the proof is omitted there).
 
 In this case \(F = (S_0,S_1,S_2)\) and the only non-trivial subspace is \(S_1\) which has dimension \(1\) in \(\R^2\).   Therefore, we are looking to calculate the Jacobian of the action of \(A\) on the projective space of lines in \(\R^2\) at the line \(S_1\) with respect to the unique rotationally invariant probability \(\eta\).
  
 For this purpose consider a unit length vector \(v \in S_1\) and an orthogonal vector \(w\) of length \(\delta\).  Let \(R\) be the rectangle \(\lbrace sv+tw: s,t \in [0,1]\rbrace\).
 
 Since we are considering the action of \(A\) on projective space, it is equivalent to consider the transformation \(B = A/|\det_{S_1}(A)| = A/|Av|\) so that \(Bv\) has length one.
 
 Notice that \(BR\) is a paralelogram with a side in \(AS_1\) of length \(1\), and area \(\epsilon\) which is the length of the orthogonal projection of \(BR\) onto the subspace orthogonal to \(AS_1\).   Calculating the determinant of \(B\) one obtains explicitly
 \[\epsilon = |\det(B)|\delta = \frac{|\det(A)|}{|\det_{S_1}(A)|^2}\delta.\]
 
 Taking the limit as \(\epsilon \to 0\) we obtain that the derivative of the action of \(A\) on projective space at the point \(S_1\) is \(\frac{|\det(A)|}{|\det_{S_1}(A)|^2}\) from which it follows that
 \[\frac{dA\eta}{d\eta}(AS_1) = \frac{|\det_{S_1}(A)|^2}{|\det(A)|}\]
 as claimed.
 
 We will now show that the general case may be reduced to the two dimensional case.
 
 For this purpose suppose now that \(d > 2\), \(F = (S_0,\ldots,S_d)\), and \(i \in \lbrace 1,\ldots, d-1\rbrace\).   
 
 Notice that the quotient space \(S_{i+1}/S_{i-1}\) is two dimensional and inherits an inner product from \(\R^d\) which makes it isometric to the orthogonal complement of \(S_{i-1}\) within \(S_{i+1}\).  The same is true for \(AS_{i+1}/AS_{i-1}\).
 
 Therefore, letting \(B: S_{i+1}/S_{i-1} \to AS_{i+1}/AS_{i-1}\) be the linear map induced by \(A\) one has
 \[\frac{dA\eta_{F_i}}{d\eta_{AF_i}}(AF) = \frac{|\det_{S_i}(B)|^2}{|\det(B)|},\]
 where on the right hand side the space \(S_i\) is considered as a one-dimensional subspace of \(S_{i+1}/S_{i-1}\).
 
 The result follows from the observation that \(|\det(B)| = |\det_{S_{i+1}}(A)|/|\det_{S_{i-1}}(A)|\) and \(|\det_{S_i}(B)| = |\det_{S_i}(A)|/|\det_{S_{i-1}}(A)|\). 
\end{proof}

\subsection{Proof of Theorem \ref{gaptheorem}}

We return now to the notation and context of the statement of Theorem \ref{gaptheorem}. In particular \(A\) and \(F = (S_0,\ldots,S_d)\) are independent random elements with distribution \(\mu\) and \(\nu\) respectively and such that \(AF \eqd F\).   Recall that \(\nu_i\) is the projection of \(\nu\) onto \(\flags_i(\R^d)\) and \(\nu_{F_i}\) is the conditional distribution of \(F\) given \(F_i\).

\subsubsection{Representation}

Since the statement of the theorem only depends on the joint distribution of \((A,F)\) we are at liberty to change \((A,F)\) to any other pair with the same distribution.

For this purpose fix a Borel mapping \((u,m) \mapsto \rho(u,m)\) where \(u \in [0,1]\), \(m\) is a Borel probability on \(\GL(\R^d)\), and \(\rho(u,m) \in \GL(\R^d)\), such that if \(U\) is a uniformly distributed random variable on \([0,1]\) then \(\rho(U,m)\) has distribution \(m\).

Assume furthermore for any convergent sequence of probabilities \(m_n \to m\) one has \(\rho(U,m_n) \to \rho(U,m)\) almost surely.   Such a representation \(\rho\) exists by the main result of \cite{blackwell-dubins1983}.

In the same way fix a representation \((u,m) \mapsto \rho_{\flags}(u,m)\) into \(\flags(\R^d)\), and representation \((u,m) \mapsto \rho_{\flags_i}(u,m)\) into \(\flags_i(\R^d)\).

Let \(\nu_i\) be the distribution of the incomplete flag \(F_i\), and \(\nu_{F_i}\) the conditional distribution of \(F\) given \(F_i\).

Setting \((A',F_i',F') = (\rho(u_1,\mu),\rho_{\flags_i}(u_2,\nu_i), \rho_{\flags}(u_3,\nu_{F_i'}))\) where \(u_1,u_2,u_3\) are i.i.d. uniform in \([0,1]\), one has that \((A',F')\) has distribution \(\mu\times \nu\) which is the joint distribution of \((A,F)\).

To simplify notation we assume from now on \((A,F) = (A',F')\).

\subsubsection{Perturbation}

Let \(\lbrace R_t, t \ge 0\rbrace\) be defined so that conditioned on \(AF_i\) it is a Brownian motion starting at the identity on the group of orthogonal transformations which fix \(AF_i\).  To clarify dependence on the other random elements we assume \(\lbrace R_t, t \ge 0\rbrace\) is \(\sigma(AF_i,u_4)\)-measurable where \(u_4\) is uniform on \([0,1]\) and independent from \((u_1,u_2,u_3)\).

Now for each \(t \ge 0\) let \(A_t = R_tA\) and notice that \(A_tF_i = AF_i\) almost surely and \(A_t \to A\) when \(t \to 0\) almost surely.

We denote by  \(C(\flags(\R^d),\R)\) the space of real valued continuous functions on \(\flags(\R^d)\) with the topology of uniform convergence, and consider for each \(t \ge 0\) the operator \(P_t:C(\flags(\R^d), \R) \to C(\flags(\R^d),\R)\) defined by
 \[(P_tf)(x) = \E{f(A_tx)}.\]
 
Notice that \(P_t1 = 1\) and if \(f \ge 0\) then \(P_tf \ge 0\).  Therefore there is an associated action of \(P_t\) on the space of probability measures on \(\flags(\R^d)\) defined by
\[\int f(x) d(P_t)^*m(x) = \int (P_tf)(x) dm(x).\]

\begin{lemma}\label{kakutaniargument}
For each \(t > 0\) there is a \(P_t^*\)-invariant probability measure \(\nu_t\) on \(\flags(\R^d)\) whose projection onto \(\flags_i(\R^d)\) is \(\nu_i\).

Furthermore picking for each \(t > 0\) a measure \(\nu_t\) as above one has \(\lim\limits_{t \to 0}\nu_t = \nu\), and letting \(x_i \mapsto \nu_{t,x_i}\) be the disintegration of \(\nu_t\) with respect to the projection to \(\flags_i(\R^d)\) the following properties hold:
 \begin{enumerate}
  \item Almost surely \(\nu_{t,F_i}\) is absolutely continuous with respect to \(\eta_{F_i}\).
  \item There is a compact subinterval \(I_t \subset (0,+\infty)\) such that \(\frac{d\nu_{t,F_i}}{d\eta_{F_i}}\) takes values in \(I_t\) almost surely.
 \end{enumerate}
\end{lemma}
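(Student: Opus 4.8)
The plan is to realise $\nu_t$ as a fixed point of $P_t^*$ on the set $\mathcal{M}_{\nu_i}$ of Borel probability measures $m$ on $\Flags(\R^d)$ with $(\pi_i)_*m = \nu_i$. This set is convex and, since $\Flags(\R^d)$ is compact, weak-$*$ compact. The operator $P_t$ is Feller: for $f \in C(\Flags(\R^d),\R)$ the function $x \mapsto (P_tf)(x) = \E{f(A_tx)}$ is continuous, since $x \mapsto A_t x$ is continuous and one may pass to the limit under the expectation by dominated convergence on the compact space $\Flags(\R^d)$; hence $P_t^*$ is weak-$*$ continuous. Moreover $P_t^*$ maps $\mathcal{M}_{\nu_i}$ into itself: by construction the part of $A_t$ coming from $R_t$ stirs inside the fibres of $\pi_i$, while the $\mu$-part carries the fibre over $x_i \in \Flags_i(\R^d)$ to the fibre over $Ax_i$, so the $\pi_i$-projection of $P_t^*m$ is the law of $A\pi_i(X)$ with $X\sim m$ and $A \sim \mu$ independent; when $(\pi_i)_*m = \nu_i$ this law is $A_*\nu_i = \nu_i$, because $\nu_i = (\pi_i)_*\nu$ is $\mu$-stationary. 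The Markov--Kakutani fixed point theorem (or Krylov--Bogolyubov averaging performed inside $\mathcal{M}_{\nu_i}$) then produces a $P_t^*$-invariant $\nu_t \in \mathcal{M}_{\nu_i}$, which by construction projects to $\nu_i$.

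For the convergence $\nu_t \to \nu$ I would combine compactness with the uniqueness hypothesis. As noted above $A_t \to A$ almost surely as $t \to 0$, so for every $f \in C(\Flags(\R^d),\R)$ one has $P_tf \to P_0f$ uniformly, where $(P_0f)(x) = \E{f(Ax)}$; this follows from uniform continuity of the action on a (random but almost surely fixed) compact subset of $\GL(\R^d)$ together with dominated convergence. Consequently, if $\nu_0$ is the weak-$*$ limit of $\nu_{t_n}$ along some sequence $t_n \to 0$, then $\nu_0$ is $P_0^*$-invariant, i.e.\ $\mu$-stationary, and $\nu_0 \in \mathcal{M}_{\nu_i}$ since this set is weak-$*$ closed. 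By the hypothesis of Theorem~\ref{gaptheorem} that $\nu$ is the only stationary probability on $\Flags(\R^d)$ projecting to $\nu_i$, we get $\nu_0 = \nu$; and since the space of probabilities on the compact metric space $\Flags(\R^d)$ is compact metrizable and every subsequential limit equals $\nu$, this gives $\nu_t \to \nu$ for any choice of the $\nu_t$.

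The smoothing properties (1) and (2) will come from the fibrewise heat kernel. For $x_i \in \Flags_i(\R^d)$ the fibre $\Flags_{x_i}(\R^d) = \pi_i^{-1}(x_i)$ is naturally the projective line of the two dimensional space $S_{i+1}/S_{i-1}$, hence a circle, on which $\eta_{x_i}$ is the rotation invariant probability; the stabiliser of $x_i$ in the orthogonal group acts on this circle through its action on $S_{i+1}/S_{i-1}$, and a Brownian motion $R_t$ on that stabiliser projects to a non-degenerate Brownian motion on the circle. For $t > 0$ its transition density $p_t$ with respect to $\eta_{x_i}$ is continuous and strictly positive on the compact circle, so there are constants $0 < c(t) \le C(t) < +\infty$, independent of $x_i$, with $c(t) \le p_t \le C(t)$; in particular, for any probability $m'$ on the fibre, the averaged pushforward $(R_t)_*m'$ has $\eta_{x_i}$-density in $[c(t),C(t)]$. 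Now I would unwind the invariance $\nu_t = P_t^*\nu_t$: disintegrating $\nu_t = \int \nu_{t,x_i}\,d\nu_i(x_i)$ over $\pi_i$ it becomes
\[\nu_t = \int_{\Flags_i(\R^d)} \int_{\GL(\R^d)} \E{(R_t)_* A_* \nu_{t,x_i}}\, d\mu(A)\, d\nu_i(x_i),\]
where the expectation is over $R_t$ and the inner measure is supported on the fibre over $Ax_i$ with $\eta_{Ax_i}$-density in $[c(t),C(t)]$. Re-disintegrating the right-hand side over $\pi_i$, using that $(x_i,A)\mapsto Ax_i$ pushes $\nu_i\times\mu$ forward to $\nu_i$, exhibits $\nu_{t,y_i}$ as an average of probabilities on $\Flags_{y_i}(\R^d)$ each having $\eta_{y_i}$-density in $[c(t),C(t)]$; hence $\nu_{t,y_i} \ll \eta_{y_i}$ with density taking values in $I_t := [c(t),C(t)] \subset (0,+\infty)$. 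By essential uniqueness of the disintegration this holds for $\nu_i$-almost every $y_i$, which is exactly (1) and (2) for $\nu_{t,F_i}$.

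I expect the main difficulty to lie in this last paragraph: turning $\nu_t = P_t^*\nu_t$ into an honest identity between disintegrations (rather than the schematic one displayed above) and checking that the two-sided density bounds survive the fibrewise averaging, together with making fully precise the sense in which the construction forces $R_t$ to move points only within the fibres of $\pi_i$ (so that $P_t^*$ genuinely preserves $\mathcal{M}_{\nu_i}$). By comparison, the Feller property of $P_t$ and the continuity $P_t \to P_0$ as $t \to 0$ should be routine consequences of compactness and dominated convergence.
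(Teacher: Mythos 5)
Your proposal is correct and follows essentially the same route as the paper. The paper just packages the fibrewise heat-kernel smoothing as an operator \(Q_tf(x)=\int f(rx)\,d\lambda_{t,\pi(x)}(r)\), so that \(P_t=P\circ Q_t\) on functions and \(P_t^*=Q_t^*\circ P^*\) on measures; then \(Q_t^*\) visibly preserves \(\pi\)-marginals (giving the Markov--Kakutani fixed point inside \(\mathcal{M}_{\nu_i}\)), the identity \(\nu_t=Q_t^*(P^*\nu_t)\) gives properties (1)--(2) immediately from the two-sided bounds on the kernel \(\lambda_{t,x_i}\), and \(\|P_tf-Pf\|_\infty\le\|Q_tf-f\|_\infty\to 0\) plus the uniqueness hypothesis gives \(\nu_t\to\nu\). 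Your ``unwinding and re-disintegrating'' paragraph is exactly the content of the \(Q_t^*\) factorization written out by hand, and the worry you flag at the end (that \(R_t\) really only moves points within fibres) is resolved by that same observation: \(Q_t\) fixes functions of the form \(g\circ\pi\), hence \(\pi_*Q_t^*m=\pi_*m\).
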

\begin{proof}
Let \(\pi:\flags(\R^d) \to \flags_i(\R^d)\) be the canonical projection.

Let
\(Q_tf(x) = \int f(rx) d\lambda_{t,x_i}(r),\)
where \(x_i = \pi(x)\), and \(\lambda_{t,x_i}\) is the distribution of the time of \(t\) of Brownian motion starting at the identity on the group of orthogonal transformations fixing \(x_i\).

Notice that 
\[P(Q_tf)(x) = \int (Q_tf)(ax) d\mu(a) = \int \int f(rax) d\lambda_{t,ax_i}(r) d\mu(a) = P_tf(x).\]

Since \(Q_t\) preserves the set of functions of the form \(f(x) = g(\pi(x))\) one obtains that \(\pi_*Q_t^*m = \pi_*m\) for all probabilities \(m\).

In particular \(P_t^* = (Q_t)^*P^*\) preserves the space of probabilities which project onto \(\nu_i\).   By the Markov-Kakutani fixed point theorem, this implies that there is at least one fixed point for \(P_t^*\) in this space.

Because \(\lambda_{t,x_i}\) has a continuous positive density with respect to the invariant measure on group of orthogonal transformations stabilizing \(x_i\) it follows that, for any probability \(m\) on \(\flags_i(\R^d)\) the measure \(Q_t^*m\) satisfies properties 1 and 2 in the statement above.

In particular for any \(P_t^*\)-invariant probability \(\nu_t\) with \(\pi_*\nu_t = \nu_i\) one has \(\nu_t = P_t^*\nu_t = Q_t^*(P^*\nu_t)\), and therefore \(\nu_{t}\) satisfies properties 1 and 2.

Finally, let \(f\) be any continuous function and, supose \(m = \lim\limits_{n \to +\infty}\nu_{t_n}\) where \(\lim\limits_{n \to +\infty}t_n = 0\).   Using the notation \(\lambda(f)\) for the integral of \(f\) with respect to the measure \(\lambda\), we have
\begin{align*}
 |m(f) - m(Pf)| &= \lim\limits_{n \to +\infty}|\nu_{t_n}(f) - \nu_{t_n}(Pf)| 
 \\ &= \lim\limits_{n \to +\infty}|\nu_{t_n}(PQ_tf) - \nu_{t_n}(Pf)| 
 \\ &\le \lim\limits_{n \to +\infty}\|P(Q_{t_n}f - f)\|_{\infty} 
 \\ &\le \lim\limits_{n \to +\infty}\|Q_{t_n} f - f\|_{\infty},
\end{align*}
where we have used that \(|Pf(x)| \le  \int |f(ax)| d\mu(a) \le \|f\|_{\infty}\) so \(P\) decreases the \(L^\infty\) norm.

Notice that \(\lambda_{t,x_i}\) converges to the point mass at the identity when \(t \to 0\).  The convergence is uniform in the sense that given \(r > 0\) and letting \(B_r\) be the ball of radius \(r\) centered at the identity in the full orthogonal group, for each \(\epsilon > 0\) there exists \(T > 0\) such that \(\lambda_{t,x_i}(B_r) > 1-\epsilon\) for all \(t < T\) and all \(x_i\).
It follows that
\[\lim\limits_{t \to 0}Q_tf(x) = \lim\limits_{t \to 0}\int f(rx) d\lambda_{t,\pi(x)}(r) = f(x),\]
for all \(x\) and the convergence is uniform.

Since \(\|Q_{t_n} f -f\|_{\infty}\) goes to zero we conclude that \(m(f) = m(Pf)\).   Since this holds for all \(f\) one has that \(P^*m = m\).   By hypothesis \(\nu\) is the unique measure with this property with projection \(\nu_i\), therefore \(m = \nu\).

We have shown that \(\nu\) is the only limit point of \(\nu_t\) when \(t \to 0\).  The space of probabilities on \(\flags(\R^d)\) is compact and metrizable, and therefore this implies \(\lim\limits_{t \to 0}\nu_t = \nu\) as claimed.
\end{proof}

\subsubsection{Conclusion of the proof}

We will fix from now on a sequence \(t_n\) given by the following claim (c.f. \cite[section 6.1.6]{distancestationary}):
\begin{claim}
 There exists a sequence of positive numbers with \(\lim\limits_{n \to +\infty}t_n = 0\) such that, letting \(\nu_{t_n}\) and \(\nu_{t_n,x_i}\) be given by lemma \ref{kakutaniargument} one has
\[\lim\limits_{n \to +\infty}\frac{1}{n}\sum\limits_{j = 1}^n \nu_{t_j,F_i} = \nu_{F_i}\]
 almost surely.
\end{claim}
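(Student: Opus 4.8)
The plan is to combine the weak convergence $\nu_t \to \nu$ furnished by Lemma~\ref{kakutaniargument} with the Banach--Saks property of an $L^2$ space, and then to pass from an $L^2$ bound on Cesàro averages to almost sure convergence of the whole Cesàro sequence. First I reduce to scalar test functions: viewing each $\nu_{t_j,F_i}$ and $\nu_{F_i}$ as a probability on the compact metrizable space $\flags(\R^d)$ (supported on the fibre over $F_i$), and fixing a countable dense set $\lbrace f_k\rbrace$ in $C(\flags(\R^d),\R)$, it suffices to find a single sequence $t_n \to 0$ with $\frac1n\sum_{j=1}^n \int f_k\, d\nu_{t_j,F_i} \to \int f_k\, d\nu_{F_i}$ almost surely for each $k$, and then to intersect the countably many full-measure events. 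For $f \in C(\flags(\R^d))$ put $g_t^f(x_i) = \int f\, d\nu_{t,x_i}$ and $g^f(x_i) = \int f\, d\nu_{x_i}$, where $x_i \mapsto \nu_{x_i}$ is the fixed Borel disintegration of $\nu$ over $\flags_i(\R^d)$, so that $g^f(F_i) = \int f\, d\nu_{F_i}$ and $\|g_t^f\|_\infty \le \|f\|_\infty$. For $h \in C(\flags_i(\R^d))$ one has $\int h\, g_t^f\, d\nu_i = \int (h\circ\pi_i)\,f\, d\nu_t$, which converges to $\int (h\circ\pi_i)\,f\, d\nu = \int h\, g^f\, d\nu_i$ by Lemma~\ref{kakutaniargument}; since continuous functions are dense in $L^2(\nu_i)$ and the family $\lbrace g_t^f\rbrace_{t>0}$ is uniformly bounded, it follows that $g_t^f \rightharpoonup g^f$ weakly in $L^2(\nu_i)$ as $t \to 0$.

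Next I build the sequence $t_n$ by induction. Having chosen $t_1 > \cdots > t_{n-1}$, I pick $t_n < 1/n$ small enough that $\bigl|\langle g_{t_j}^{f_k}-g^{f_k},\, g_{t_n}^{f_k}-g^{f_k}\rangle_{L^2(\nu_i)}\bigr| \le \|f_k\|_\infty^2/n^2$ for all $j < n$ and all $k \le n$; this is possible since these are finitely many conditions and each inner product tends to $0$ as $t_n \to 0$ by the weak convergence just established. Expanding $\bigl\|\tfrac1n\sum_{j=1}^n(g_{t_j}^{f_k}-g^{f_k})\bigr\|^2$ as a double sum, with the $n$ diagonal terms bounded by $(2\|f_k\|_\infty)^2$, the $O(k^2)$ off-diagonal terms with both indices below $k$ bounded crudely, and all remaining off-diagonal terms bounded via the quantity just arranged, the usual Banach--Saks estimate gives, for each fixed $k$,
\[\Bigl\|\tfrac1n\sum_{j=1}^n g_{t_j}^{f_k} - g^{f_k}\Bigr\|_{L^2(\nu_i)}^2 \le \frac{(2\|f_k\|_\infty)^2}{n} + O_k\!\Bigl(\frac{\log n}{n^2}\Bigr) = O_k\!\Bigl(\frac1n\Bigr).\]
Since $F_i$ has law $\nu_i$, this says exactly that $\E{\bigl(\tfrac1n\sum_{j=1}^n \int f_k\, d\nu_{t_j,F_i} - \int f_k\, d\nu_{F_i}\bigr)^2} = O_k(1/n)$.

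It remains to upgrade this to almost sure convergence of the full Cesàro sequence. Write $\mu_n = \frac1n\sum_{j=1}^n \nu_{t_j,F_i}$; from $\mu_{m+1} = \frac{m}{m+1}\mu_m + \frac1{m+1}\nu_{t_{m+1},F_i}$ one gets $\bigl|\int f_k\, d\mu_{m+1} - \int f_k\, d\mu_m\bigr| \le \frac{2\|f_k\|_\infty}{m+1}$, hence for $l^2 \le m < (l+1)^2$ the oscillation $\bigl|\int f_k\, d\mu_m - \int f_k\, d\mu_{l^2}\bigr|$ is at most $\delta_l := 2\|f_k\|_\infty\log\tfrac{(l+1)^2}{l^2} = O(1/l)$. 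Combining this deterministic bound with the estimate of the previous paragraph along the subsequence $n = l^2$ gives
\[\E{\sum_l \sup_{l^2 \le m < (l+1)^2}\Bigl(\int f_k\, d\mu_m - \int f_k\, d\nu_{F_i}\Bigr)^2} \le \sum_l\bigl(2\delta_l^2 + 2\,O_k(1/l^2)\bigr) < +\infty,\]
so the summand tends to $0$ almost surely; since every sufficiently large $m$ lies in some block $[l^2,(l+1)^2)$ with $l \to \infty$, this forces $\int f_k\, d\mu_m \to \int f_k\, d\nu_{F_i}$ almost surely, and intersecting over $k$ gives $\mu_m \to \nu_{F_i}$ weakly, almost surely.

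I expect the last step to be the main obstacle. The convergence $g_t^f \rightharpoonup g^f$ is genuinely only weak --- the $g_t^f$ need not converge in $L^2$-norm --- so there is no summable $L^2$-rate available for the Cesàro averages themselves (the $1/n$ rate above is optimal in general), and one can neither invoke Borel--Cantelli directly nor diagonalize over $k$ at the level of subsequences of the Cesàro sequence without destroying Cesàro convergence. The two devices that circumvent this are (i) arranging the Banach--Saks construction so that one and the same sequence $t_n$ serves all test functions $f_k$ at once, and (ii) exploiting that consecutive Cesàro averages differ by $O(1/m)$, together with the fact that the subsequence $n = l^2$ has consecutive ratios tending to $1$, in order to propagate the almost sure convergence from $n = l^2$ to all $n$.
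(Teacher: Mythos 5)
Your proof is correct, but it takes a genuinely different route from the paper. The paper invokes Koml\'os' theorem as a black box: for each test function $f_k$ the scalars $x_i \mapsto \nu_{s_m,x_i}(f_k)$ form a bounded sequence in $L^1(\nu_i)$, so Koml\'os gives a subsequence along which (and along every further subsequence) the Ces\`aro averages converge $\nu_i$-almost everywhere; a diagonalization over $k$ then produces one sequence $t_n$ serving all $f_k$, and the limit is identified as $\nu_{F_i}$ by dominated convergence using $\nu_{t_n}\to\nu$. You instead avoid Koml\'os entirely and in effect reprove the special case you need: you first establish weak $L^2(\nu_i)$-convergence $g_t^{f}\rightharpoonup g^{f}$, then run a hands-on Banach--Saks extraction --- with the inductive conditions set up so that a \emph{single} sequence $t_n$ simultaneously handles all $f_k$ --- to get an $L^2$ rate $O_k(1/n)$ for the Ces\`aro averages, and finally upgrade to almost sure convergence of the full Ces\`aro sequence via the block argument along $n=l^2$, using that consecutive Ces\`aro averages of a uniformly bounded sequence differ by $O(1/m)$. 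This last upgrade is where your route depends essentially on the uniform boundedness $|\nu_{t,x_i}(f_k)|\le\|f_k\|_\infty$, a feature Koml\'os' theorem does not need; in exchange you get a self-contained, quantitative argument that replaces a nontrivial cited result with elementary Hilbert-space estimates and a Borel--Cantelli-type computation. Both approaches are valid here; the paper's is shorter because it delegates the hard step, while yours is longer but more transparent about the mechanism.
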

\begin{proof}
To begin fix any sequence of positive numbers with \(\lim\limits_{m \to +\infty}s_m = 0\).

Let \(\lbrace f_k: k = 1,2,\ldots\rbrace\) be a dense sequence of continuous functions on \(\flags(\R^d)\). 

Notice that \(x_i \mapsto \nu_{s_m,x_i}(f_j), m = 1,2,\ldots\) is a bounded sequence in \(L^1(\flags_i(\R^d),\nu_i)\).

By Komlos' theorem (see \cite{komlos}) there exists a subsequence \(\lbrace m_{1,j} : j = 1,2,\ldots\rbrace\) such that
\[\lim\limits_{n \to +\infty}\frac{1}{n}\sum\limits_{j = 1}^{n}\nu_{s_{m_{1,j}},x_i}(f_1)\]
exists for \(\nu_i\)-almost every \(x_i\), and any further subsequence has the same property.

For each \(k = 1,2,\ldots\), using Komlos' theorem as above, we may define \(m_{k+1,1},m_{k+1,2},\ldots\) a subsequence of \(m_{k,1},m_{k,2},\ldots\) such that
\[\lim\limits_{n \to +\infty}\frac{1}{n}\sum\limits_{j = 1}^{n}\nu_{s_{m_{k+1,j}},x_i}(f_{k+1})\]
exists for \(\nu_i\)-almost every \(x_i\), and any further subsequence has the same property.

Letting \(t_n = s_{m_{n,n}}\) we have that
\[\lim\limits_{n \to +\infty}\frac{1}{n}\sum\limits_{j = 1}^{n}\nu_{t_j,x_i}(f_{k})\]
exists for \(\nu_i\)-almost every \(x_i\) and all \(k\).

For each \(x_i\) the the restriction of \(\lbrace f_k\rbrace\) to \(\pi^{-1}(x_i)\) is dense.  Since the space of probabilities on \(\pi^{-1}(x_i)\) is compact, this implies that there exist probabilities \(m_{x_i}\) such that
\[\lim\limits_{n \to +\infty}\frac{1}{n}\sum\limits_{j = 1}^{n}\nu_{t_j,x_i} = m_{x_i}\]
for \(\nu_i\)-almost every \(x_i\).

By lemma \ref{kakutaniargument} one has \(\lim\limits_{n \to +\infty}\nu_{t_n} = \nu\).  Therefore, for any continuous \(f\) by dominated convergence one has
\[\int m_{x_i}(f)d\nu_i(x_i) = \int \lim\limits_{n \to +\infty}\frac{1}{n}\sum\limits_{j = 1}^{n}\nu_{t_j,x_i}(f) d\nu_{x_i} = \lim\limits_{n \to +\infty} \frac{1}{n}\sum\limits_{j = 1}^{n}\nu_{t_j}(f)  = \nu(f),\]
from where \(\int m_{x_i} d\nu_i(x_i) = \nu\) and \(m_{x_i} = \nu_{x_i}\) for \(\nu_i\)-almost every \(x_i\).
\end{proof}

For each \(n\) let \(T_n\) be uniform \(\lbrace 1,\ldots,n\rbrace\) and independent from \((u_1,u_2,u_3,u_4)\), and let \(X_n = \rho_{\flags}(u_3,\nu_{T_n,F_i})\) and \(A_n = R_{T_n}A\).

\begin{claim}
 One has that \(X_n \eqd A_nX_n\).
\end{claim}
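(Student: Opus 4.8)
The plan is to prove the slightly stronger statement that, conditionally on \(\lbrace T_n = j\rbrace\), both \(X_n\) and \(A_nX_n\) have law \(\nu_{t_j}\); since \(T_n\) is uniform on \(\lbrace 1,\ldots,n\rbrace\) and independent of \((u_1,u_2,u_3,u_4)\), averaging over \(j\) then yields \(X_n \eqd A_nX_n\). The only substantive input is that \(\nu_{t_j}\) is \(P_{t_j}^*\)-invariant; everything else is reading off conditional independences from the representation in terms of the independent uniform variables \(u_1,\ldots,u_4\).

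First I would identify the conditional law of \(X_n\) given \(\lbrace T_n = j\rbrace\). Given \(F_i = x_i\) the flag \(X_n = \rho_{\flags}(u_3,\nu_{t_j,x_i})\) has law \(\nu_{t_j,x_i}\), because \(u_3\) is independent of \((u_2,T_n)\); integrating over \(x_i \sim \nu_i\) and using that \(x_i \mapsto \nu_{t_j,x_i}\) is the disintegration of \(\nu_{t_j}\) over \(\pi\) with \(\pi_*\nu_{t_j} = \nu_i\) (Lemma \ref{kakutaniargument}), we get \(X_n \sim \nu_{t_j}\) given \(\lbrace T_n = j\rbrace\). I would also record that, conditionally on \(\lbrace T_n = j\rbrace\), the matrix \(A = \rho(u_1,\mu)\) is independent of \(X_n\) (it is a function of \(u_1\), while \(X_n\) is a function of \((u_2,u_3)\)) and has law \(\mu\), so that \((A,X_n)\) has conditional law \(\mu\otimes\nu_{t_j}\); and that \(\pi(X_n) = F_i\), hence \(\pi(AX_n) = AF_i\) and \(AF_i\) is \(\sigma(A,X_n)\)-measurable.

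Next, for \(A_nX_n = R_{t_j}AX_n\) I would condition additionally on \((A,X_n)\). Since \(\lbrace R_t\rbrace\) is \(\sigma(AF_i,u_4)\)-measurable with \(u_4\) independent of \((u_1,u_2,u_3,T_n)\), the conditional law of \(R_{t_j}\) given \(\sigma(A,X_n,T_n)\) is \(\lambda_{t_j,AF_i}\), the time-\(t_j\) Brownian law on orthogonal maps fixing \(AF_i\); as \(AF_i = \pi(AX_n)\) this gives, for bounded continuous \(g\),
\[\E{g(A_nX_n)\mid A,X_n,T_n = j} = \int g(rAX_n)\,d\lambda_{t_j,\pi(AX_n)}(r) = (Q_{t_j}g)(AX_n),\]
with \(Q_{t_j}\) the averaging operator from the proof of Lemma \ref{kakutaniargument}. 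Taking expectations and using \((A,X_n)\sim\mu\otimes\nu_{t_j}\),
\[\E{g(A_nX_n)\mid T_n = j} = \int (P Q_{t_j}g)\,d\nu_{t_j} = \int (P_{t_j}g)\,d\nu_{t_j} = \int g\,d(P_{t_j}^*\nu_{t_j}) = \int g\,d\nu_{t_j},\]
where \(PQ_{t_j} = P_{t_j}\) and the \(P_{t_j}^*\)-invariance of \(\nu_{t_j}\) were both established in the proof of Lemma \ref{kakutaniargument}. Thus \(A_nX_n\) also has conditional law \(\nu_{t_j}\) given \(\lbrace T_n = j\rbrace\), and averaging over \(j\) proves the claim.

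The main obstacle I anticipate is purely the bookkeeping of these conditionings: one must check that \(R_{t_j}\) really is conditionally independent of \((A,X_n)\) given \(AF_i\) — so that it behaves like a fresh Brownian increment, which is what makes the invariance of \(\nu_{t_j}\) applicable — and that \(\pi(X_n) = F_i\) exactly, so that \(R_{t_j}\) fixes \(\pi(AX_n)\). Both are precisely the properties built into the definitions of the perturbation \(\lbrace R_t\rbrace\) and of \(X_n\) via the representations \(\rho,\rho_{\flags},\rho_{\flags_i}\), so no further analytic input is needed.
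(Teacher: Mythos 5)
Your proposal is correct and is essentially the paper's proof: the paper carries out the same chain of equalities in one display, keeping the average \(\frac{1}{n}\sum_k\) explicit, while you condition on \(\{T_n=j\}\) first and average at the end, but both arguments hinge on exactly the same facts — the representation by independent uniforms, the identity \(PQ_{t_j}=P_{t_j}\), and the \(P_{t_j}^*\)-invariance of \(\nu_{t_j}\) from Lemma \ref{kakutaniargument}. Your explicit verification that \(R_{t_j}\) is conditionally independent of \((A,X_n)\) given \(AF_i\) (via the \(\sigma(AF_i,u_4)\)-measurability and independence of \(u_4\)) is the same bookkeeping the paper uses implicitly when it rewrites the iterated integrals as a conditional expectation given \((A,F_i)\).
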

\begin{proof}
For any continuous function \(f:\flags(\R^d) \to \R\) one has
\begin{align*}
 \E{f(X_n)} &= \E{\E{f(X_n)\vert F_i}}
\\ &= \E{\frac{1}{n}\sum\limits_{k = 1}^n \nu_{t_k,F_i}(f)}
\\ &= \frac{1}{n}\sum\limits_{k = 1}^n  \int  \int f(x) d\nu_{t_k,x_i}(x) d\nu_i(x_i) 
\\ &= \frac{1}{n}\sum\limits_{k = 1}^n  \int  \int P_{t_k}f(x) d\nu_{t_k,x_i}(x) d\nu_i(x_i) 
\\ &= \frac{1}{n}\sum\limits_{k = 1}^n  \int \int  \int \int f(rax) d\lambda_{t_k,a x_i}(r) d\nu_{t_k,x_i}(x)d\nu_i(x_i) d\mu(a) 
\\ &= \E{\frac{1}{n}\sum\limits_{k = 1}^n  \int \int f(rAx) d\lambda_{t_k,AF_i}(r) d\nu_{t_k,F_i}(x)} 
\\ &= \E{\frac{1}{n}\sum\limits_{k = 1}^n  \int f(R_{t_k}Ax)  d\nu_{t_k,F_i}(x)} 
\\ &= \E{f(A_nX_n)\vert A,F_i} 
\\ &= \E{f(A_nX_n)}.
\end{align*} 
\end{proof}

Since \(T_n\) converges in distribution to \(0\) there exists a subsequence such that \(\lim\limits_{k \to +\infty}T_{n_k} = 0\) almost surely.  We fix such a subsequence from now on.

\begin{claim}
 The conditional distribution of \((A,A_{n_k}X_{n_k})\) given \(AF_i\) converges almost surely to the conditional distribution of \((A,F)\) given \(AF_i\).
\end{claim}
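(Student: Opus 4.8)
The plan is to test the two conditional laws against a fixed countable family $\{f_m\}$ of bounded continuous functions on $\GL(\R^d)\times\flags(\R^d)$ which determines weak convergence of probabilities, so that it suffices to prove, for each $m$,
\[
\E{f_m(A,A_{n_k}X_{n_k})\mid AF_i}\xrightarrow[\ k\to\infty\ ]{}\E{f_m(A,AF)\mid AF_i}\qquad\text{almost surely},
\]
and then to intersect the resulting full-measure events; here the right-hand side means the integral of $f_m$ against the conditional law of $(A,AF)$ given $AF_i$, which was identified in the proof of Lemma \ref{entropyandmutualinformationlemma}. Since $AF_i$ is $\sigma(A,F_i)$-measurable and $F_i=A^{-1}(AF_i)$ is $\sigma(A,AF_i)$-measurable, one has $\sigma(AF_i)\subseteq\sigma(A,F_i)$, and I would treat each left-hand side as $\E{\,\E{f_m(A,A_{n_k}X_{n_k})\mid A,F_i}\,\mid AF_i\,}$, pass to the limit inside, and only then condition down.

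The first step is to identify the inner conditional expectation. Arguing exactly as in the proof of the claim $X_n\eqd A_nX_n$ above — now carrying along the extra coordinate, which equals the constant $A$ once $A$ is given, and using that the single random index $T_{n_k}$ is independent of $(u_1,u_2,u_3,u_4)$ and simultaneously governs the time of the rotation $R_{T_{n_k}}$ and the choice of the measure $\nu_{T_{n_k},F_i}$ — one obtains
\[
\E{f(A,A_{n_k}X_{n_k})\mid A,F_i}=\frac{1}{n_k}\sum_{j=1}^{n_k}\int\!\!\int f(A,rAx)\,d\lambda_{t_j,AF_i}(r)\,d\nu_{t_j,F_i}(x).
\]
The point to emphasize is that, because $T_{n_k}$ is shared by the rotation and the measure, this is a single Cesàro average over $j$ and not a double sum; that is what lets the rotation be removed term by term below.

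The second step removes the rotation. Working on the eventual full-measure event and fixing $\omega$, hence $A=A(\omega)$ and $F_i=F_i(\omega)$: by the uniform estimate established inside the proof of Lemma \ref{kakutaniargument} — that $\lambda_{t,x_i}$ converges to the unit mass at the identity as $t\to0$, uniformly in $x_i$ — together with the uniform continuity of $(r,y)\mapsto f(A,ry)$ on the compact product of the orthogonal group with $\flags(\R^d)$, the quantities
\[
\delta_j:=\sup_{y\in\flags(\R^d)}\Bigl|\,\int f(A,ry)\,d\lambda_{t_j,AF_i}(r)-f(A,y)\,\Bigr|
\]
tend to $0$ as $j\to\infty$. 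Applying this with $y=Ax$ — note $\pi_i(Ax)=AF_i$ for $x$ in the fibre over $F_i$ — and averaging gives
\[
\Bigl|\,\E{f(A,A_{n_k}X_{n_k})\mid A,F_i}-\int f(A,Ax)\,d\Bigl(\tfrac{1}{n_k}\textstyle\sum_{j=1}^{n_k}\nu_{t_j,F_i}\Bigr)(x)\,\Bigr|\ \le\ \frac{1}{n_k}\sum_{j=1}^{n_k}\delta_j\ \longrightarrow\ 0 .
\]
By the Claim established above via Komlos' theorem, $\tfrac1n\sum_{j=1}^n\nu_{t_j,F_i}\to\nu_{F_i}$ weakly for $\nu_i$-a.e.\ $F_i$, hence so does the subsequence indexed by $n_k$; since $x\mapsto f(A,Ax)$ is continuous on the compact space $\flags(\R^d)$, this yields $\E{f(A,A_{n_k}X_{n_k})\mid A,F_i}\to\int f(A,Ax)\,d\nu_{F_i}(x)$. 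Finally, given $(A,F_i)$ the flag $F=\rho_{\flags}(u_3,\nu_{F_i})$ has law $\nu_{F_i}$, so $AF$ has law $A\nu_{F_i}$ and the limit is $\E{f(A,AF)\mid A,F_i}$.

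To conclude, these approximating conditional expectations are bounded by $\|f\|_\infty$, so the conditional dominated convergence theorem applied to $\E{\cdot\mid AF_i}=\E{\,\E{\cdot\mid A,F_i}\,\mid AF_i\,}$ gives $\E{f(A,A_{n_k}X_{n_k})\mid AF_i}\to\E{f(A,AF)\mid AF_i}$ almost surely for each $f$ in the countable family, hence the asserted almost sure weak convergence of the conditional laws. The step I expect to require the most care is the second one: pinning down the single Cesàro formula and then removing the rotation uniformly over the entire fibre over $F_i$ — this is precisely where the uniform-in-$x_i$ behaviour of the Brownian kernels from Lemma \ref{kakutaniargument} and the compactness of $\flags(\R^d)$ enter, and where one must arrange for the null sets coming from the Komlos Claim and from the construction of the measures $\nu_{t_j}$ to all lie in one common full-measure event.
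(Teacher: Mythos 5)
Your proof is correct and follows the same overall strategy as the paper's: pass between the $\sigma$-algebras $\sigma(AF_i)\subset\sigma(A,F_i)$ via the tower property, remove the rotation $R_{T_{n_k}}$, and pass to the limit using the preceding Komlós claim together with conditional dominated convergence. The only notable difference is the rotation-removal step: the paper uses that $T_{n_k}\to 0$ almost surely along the chosen subsequence so that $A_{n_k}X_{n_k}$ and $AX_{n_k}$ become pathwise close, whereas you Cesàro-average the uniform-in-$x_i$ concentration estimate for $\lambda_{t,x_i}$ from Lemma \ref{kakutaniargument} inside the explicit formula for $\E{f(A,A_{n_k}X_{n_k})\mid A,F_i}$; this variant is marginally cleaner in that it never invokes the almost-sure convergence of the index $T_{n_k}$, but both arguments rest on the same ingredients.
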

\begin{proof}
It suffices to show that for all bounded and uniformly continuous \(f\) one has
\[\lim\limits_{k \to +\infty}\E{f(A,A_{n_k}X_{n_k})\vert AF_i} = \E{f(A,AF)\vert AF_i},\]
almost surely.

The distance between \(A_{n_k}X_{n_k} = R_{T_{n_k}}AX_{n_k}\) and \(AX_{n_k}\) goes to \(0\) almost surely.  Therefore, since \(f\) is uniformly continuous, one has
\[\lim\limits_{k \to +\infty}|f(A,A_{n_k}X_{n_k}) - f(A,AX_{n_k})| = 0,\]
almost surely.

Because \(f\) is bounded, by dominated convergence, the limit above also holds in the \(L^1\) sense, and therefore
\[\lim\limits_{k \to +\infty}\E{f(A,A_{n_k}X_{n_k}) - f(A,AX_{n_k}) \vert AF_i} = 0,\]
almost surely.

Noticing that \(\sigma(AF_i) \subset \sigma(A,F_i) = \sigma(A,AF_i)\), we now calculate 
\begin{align*}
\lim\limits_{k \to +\infty}\E{f(A,A_{n_k}X_{n_k})\vert AF_i} &= \lim\limits_{k \to +\infty}\E{f(A,AX_{n_k})\vert AF_i} 
\\ &= \lim\limits_{k \to +\infty}\E{\E{ f(A,AX_{n_k})\vert A, F_i} \vert AF_i}
 \\ &= \lim\limits_{k \to +\infty}\E{\frac{1}{n_k}\sum\limits_{j=1}^{n_k} \int f(A,Ax)d\nu_{t_j,F_i}(x) \vert AF_i}
 \\ &= \E{ \int f(A,Ax)d\nu_{F_i}(x) \vert AF_i}
 \\ &= \E{ \E{f(A,AF) \vert A,F_i} \vert AF_i}
 \\ &= \E{f(A,AF) \vert AF_i} 
\end{align*}
where we have used the almost sure convergence of \(\frac{1}{n_k}\sum\limits_{j = 1}^{n_k}\nu_{t_j,F_i}\) to \(\nu_{F_i}\) and boundedness of \(f\) to move the limit inside the expected value in the third to last step.
\end{proof}

In view of the above claim by, lemma \ref{entropyandmutualinformationlemma}, proposition \ref{semicontinuityproposition}, and Fatou's lemma we have
\begin{align*}
 \kappa_i &= \E{I(A,AF|AF_i)} 
 \\ &\le \E{\liminf\limits_{k \to +\infty}I(A,A_{n_k}X_{n_k}|AF_i)}
 \\ &\le \liminf\limits_{k \to +\infty}\E{I(A,A_{n_k}X_{n_k}|AF_i)}.
\end{align*}

By monotonocity and the chain rule (Proposition \ref{cocycleproposition}) we continue
\begin{align*}
\liminf\limits_{k \to +\infty}\E{I(A,A_{n_k}X_{n_k}|AF_i)}  &\le \liminf\limits_{k \to +\infty}\E{I(A,(T_{n_k},A_{n_k}X_{n_k})\vert AF_i}
\\ &= \liminf\limits_{k \to +\infty}\E{I(A,T_{n_k}\vert AF_i) + I(A,A_{n_k}X_{n_k}\vert T_{n_k},AF_i)},
\end{align*}

Because \(T_{n_k}\) is independent from \(A\) and \(AF_i\) we have
\[\liminf\limits_{k \to +\infty}\E{I(A,T_{n_k}\vert AF_i) + I(A,A_{n_k}X_{n_k}\vert T_{n_k},AF_i)} = \liminf\limits_{k \to +\infty}\E{I(A,A_{n_k}X_{n_k}\vert T_{n_k},AF_i)}.\]

And, finally, by monotonicity of mutual information
\[\liminf\limits_{k \to +\infty}\E{I(A,A_{n_k}X_{n_k}\vert T_{n_k},AF_i)} \le \liminf\limits_{k \to +\infty}\E{I((R_{n_k},A),A_{n_k}X_{n_k}\vert T_{n_k},AF_i)}.\]

We conclude the proof by establishing the following:
\begin{claim}
In the above context one has:
 \[\liminf\limits_{k \to +\infty}\E{I((R_{n_k},A),A_{n_k}X_{n_k}\vert T_{n_k},AF_i)} = \chi_i - \chi_{i+1}.\]
\end{claim}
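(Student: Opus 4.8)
The plan is to compute every conditional distribution given the $\sigma$-algebra $\sigma(T_{n_k},AF_i)$ and apply the Gelfand--Yaglom--Perez theorem as in the proof of Lemma \ref{entropyandmutualinformationlemma}, and then to recognize the resulting integral as a combination of Lyapunov exponents. Write $W=(R_{T_{n_k}},A)$ and $Z=A_{n_k}X_{n_k}=R_{T_{n_k}}AX_{n_k}$. First I would show that the conditional law of $Z$ given $(W,T_{n_k},AF_i)$ is $(R_{T_{n_k}}A)_*\nu_{T_{n_k},F_i}$, a probability on the fibre $\pi_i^{-1}(AF_i)$ which by Lemma \ref{kakutaniargument} is mutually absolutely continuous with $\eta_{AF_i}$ with density bounded above and below away from $0$ (only finitely many values of $T_{n_k}$ occur), and that the conditional law of $Z$ given $(T_{n_k},AF_i)$ is the disintegration $\nu_{T_{n_k},AF_i}$ of the approximating stationary measure --- this last point following from $P_{T_{n_k}}^*\nu_{T_{n_k}}=\nu_{T_{n_k}}$ by disintegrating over $\pi_i$, using that $R_{T_{n_k}}$ fixes $AF_i$ so that $\pi_i(Z)=AF_i$, together with $AF_i\eqd F_i$. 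By Lemma \ref{kakutaniargument}(2) the density $p:=\frac{d\nu_{T_{n_k},AF_i}}{d\eta_{AF_i}}$ is then also bounded above and below, so the conditional joint law of $(W,Z)$ is absolutely continuous with respect to the product of the conditional marginals and Gelfand--Yaglom--Perez gives $I(W,Z\mid T_{n_k},AF_i)=\E{\log\frac{J(W,Z)}{p(Z)}\mid T_{n_k},AF_i}$, where $J$ is the density of the conditional joint law with respect to the product of the conditional law of $W$ and $\eta_{AF_i}$.

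Next I would take expectations. Since $(R_{T_{n_k}}A)^{-1}Z=X_{n_k}$ one has $\log J=\log\frac{d\nu_{T_{n_k},F_i}}{d\eta_{F_i}}(X_{n_k})+\log\frac{d(R_{T_{n_k}}A)_*\eta_{F_i}}{d\eta_{AF_i}}(Z)$, and Lemma \ref{grassmannjacobian} together with the identity $|\det_S(R_{T_{n_k}}A)|=|\det_S(A)|$ (valid for orthogonal $R_{T_{n_k}}$ and every subspace $S$) turns the second summand into $\log\frac{|\det_{S_i(X_{n_k})}(A)|^2}{|\det_{S_{i-1}}(A)|\,|\det_{S_{i+1}}(A)|}$, where $S_j(X_{n_k})$ denotes the $j$-dimensional subspace of the flag $X_{n_k}$ and $S_j(X_{n_k})=S_j$ for $j\ne i$ because $\pi_i(X_{n_k})=F_i$. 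On the other hand, from the identification of the conditional law of $Z$ in the previous paragraph and the distributional identity $(Z,T_{n_k},AF_i)\eqd(X_{n_k},T_{n_k},F_i)$ (which holds because $T_{n_k}$ is independent of $AF_i$ and of $F_i$, because $AF_i\eqd F_i$, and because the matching conditional laws are given by one fixed measurable family), one gets $\E{\log p(Z)}=\E{\log\frac{d\nu_{T_{n_k},F_i}}{d\eta_{F_i}}(X_{n_k})}$. All these quantities are integrable by the bounds from Lemma \ref{kakutaniargument} and the hypothesis $\E{|\log\sigma_j(A)|}<\infty$, so the two density terms cancel and
\[\E{I(W,Z\mid T_{n_k},AF_i)}=2\,\E{\log|\det_{S_i(X_{n_k})}(A)|}-\E{\log|\det_{S_{i-1}}(A)|}-\E{\log|\det_{S_{i+1}}(A)|}.\]
Since $X_{n_k}$ shares the $(i-1)$- and $(i+1)$-dimensional subspaces of $F$, the definition of the Lyapunov exponents identifies the last two expectations with $\chi_1+\cdots+\chi_{i-1}$ and $\chi_1+\cdots+\chi_{i+1}$.

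To finish, I would pass to the limit in the remaining term. The random elements $A$ and $X_{n_k}$ are independent, being built from disjoint blocks of the coordinates used in the representation, so $\E{\log|\det_{S_i(X_{n_k})}(A)|}=\int g\,dm_{n_k}$ with $g(x)=\int\log|\det_{S_i(x)}(a)|\,d\mu(a)$ and $m_{n_k}=\frac1{n_k}\sum_{j=1}^{n_k}\nu_{t_j}$ the distribution of $X_{n_k}$. The function $g$ is bounded (by $2\sum_j\E{|\log\sigma_j(A)|}$) and continuous, and $m_{n_k}\to\nu$ weakly by Lemma \ref{kakutaniargument}, so $\E{\log|\det_{S_i(X_{n_k})}(A)|}\to\int g\,d\nu=\E{\log|\det_{S_i(F)}(A)|}=\chi_1+\cdots+\chi_i$. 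Therefore $\E{I(W,Z\mid T_{n_k},AF_i)}\to2(\chi_1+\cdots+\chi_i)-(\chi_1+\cdots+\chi_{i-1})-(\chi_1+\cdots+\chi_{i+1})=\chi_i-\chi_{i+1}$, and in particular the $\liminf$ in the claim equals $\chi_i-\chi_{i+1}$.

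The step I expect to be the main obstacle is identifying the conditional law of $A_{n_k}X_{n_k}$ given $(T_{n_k},AF_i)$ with the disintegration $\nu_{T_{n_k},AF_i}$: this requires unwinding the $P_{T_{n_k}}^*$-invariance of $\nu_{T_{n_k}}$ through the disintegration over $\pi_i$ while keeping careful track of the conditioning $\sigma$-algebras coming from the representation and of the fact that the Brownian rotations $R_{T_{n_k}}$ fix $AF_i$, so that they move only the $i$-dimensional subspace of the flag. Once this is settled the remainder is bookkeeping with Radon--Nikodym derivatives and one weak-convergence argument.
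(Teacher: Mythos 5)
Your proposal is correct and follows essentially the same route as the paper: identify the conditional laws of $A_{n_k}X_{n_k}$ given $\sigma(T_{n_k},AF_i)$ and given $\sigma(W,T_{n_k},AF_i)$, apply Gelfand--Yaglom--Perez, factor the Radon--Nikodym derivative through $\eta_{AF_i}$ via Lemma~\ref{grassmannjacobian}, cancel the $\varphi$ terms using the distributional identity $(T_{n_k},X_{n_k},F_i)\eqd(T_{n_k},R_{T_{n_k}}AX_{n_k},AF_i)$, and pass to the limit using weak convergence of the law of $X_{n_k}$ to $\nu$. The only cosmetic difference is that you replace the paper's Skorohod-representation/dominated-convergence step by observing that $g(x)=\int\log|\det_{S_i(x)}(a)|\,d\mu(a)$ is bounded and continuous, which is an equivalent argument.
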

\begin{proof}
We first claim that the the conditional distribution of \(A_{n_k}X_{n_k}\) given \(\F_1 = \sigma(T_{n_k}, A_{n_k}F_i) = \sigma(T_{n_k},AF_i)\) is \(\nu_{T_{n_k},AF_i}\).

Since \(AF_i\) has distribution \(\nu_i\) and is independent from \(T_{n_k}\) it suffices to prove that the conditional distribution of \((A_{n_k}X_{n_k},AF_i)\) given \(T_{n_k}\) coincides with that of \((X_{n_k},F_i)\) given \(T_{n_k}\).

Since \(F_i = \pi(X_{n_k})\) and \(AF_i = \pi(A_{n_k}X_{n_k})\), we only need to verify that the conditional distribution of \(X_{n_k}\) given \(T_{n_k}\) (which is \(\nu_{T_{n_k}}\) coincides with that of \(A_{n_k}X_{n_k}\) given \(T_{n_k}\).  This follows immediately since \(\nu_{T_{n_k}}\) is \(P_{T_{n_k}}^*\)-invariant.

Now notice that the conditional distribution of \(X_{n_k}\) given \(\F_2 = \sigma((R_{T_{n_k}},A),\F_1) = \sigma(R_{T_{n_k}},A,T_{n_k},AF_i) = \sigma(R_{T_{n_k}},A,T_{n_k},F_i)\) is also \(\nu_{T_{n_k},F_i}\).   
This implies that the conditional distribution of \(A_{n_k}X_{n_k}\) given \(\F_2\) is \(R_{n_k}A\nu_{T_{n_k},F_i}\).

Let \(m\) denote the conditional distribution of \((R_{n_k},A)\) given \(\F_1\).
 
 We have shown that the joint distribution of \((R_{n_k},A),A_{n_k}X_{n_k}\) given \(\F_1\) has projections \(m\) and \(\nu_{T_{n_k},AF_i}\), while its disintegration onto the factor \(m\) has conditional measures \(R_{n_k}A\nu_{T_{n_k},F_i}\).
 
 Applying the Gelfand-Yaglom-Perez theorem this yields
 \[\E{I((R_{n_k},A),A_{n_k}X_{n_k}\vert T_{n_k},AF_i)} = \E{\frac{dR_{n_k}A\nu_{T_{n_k},AF_i}}{d\nu_{T_{n_k},AF_i}}(R_{n_k}AX_{n_k})}.\]
 
 Let \(S_{n,i}\) be the \(i\)-dimensional subspace of \(X_{n}\) and \(\varphi_{t_n,F_i} = \frac{d\nu_{t_n,F_i}}{d\eta_{F_i}}\).
 Using lemma \ref{grassmannjacobian} we obtain
\begin{align*}
 \E{\frac{dR_{n_k}A\nu_{T_{n_k},AF_i}}{d\nu_{T_{n_k},AF_i}}(R_{n_k}AX_{n_k})} &= \E{\log\left(\frac{|\det_{S_{n_k,i}}(R_{n_k}A)|^2}{|\det_{S_{i-1}}(R_{n_k}A)||\det_{S_{i+1}}(R_{n_k}A)|}\right)} 
 \\ &+ \E{\log\left(\frac{\varphi_{T_{n_k},F_i}(X_{n_k})}{\varphi_{T_{n_k},AF_i}(A_{n_k}X_{n_k})}\right)}
 \\ &= \E{\log\left(\frac{|\det_{S_{n_k,i}}(R_{n_k}A)|^2}{|\det_{S_{i-1}}(R_{n_k}A)||\det_{S_{i+1}}(R_{n_k}A)|}\right)}
 \end{align*}
where for the last equality we have used that \((T_{n_k},X_{n_k},\pi(X_{n_k})) = (T_{n_k},X_{n_k},F_i)\) has the same distribution as \((T_{n_k},R_{n_k}AX_{n_k}, \pi(R_{n_k}A)) = (T_{n_k},R_{n_k}AX_{n_k},AF_i)\).

Since \(R_{n_k}\) is an orthogonal transformation the determinants of \(R_{n_k}A\) and \(A\) coincide on all subspaces.  Therefore the right hand side above is equal to
\[\E{\log\left(\frac{|\det_{S_{n_k,i}}(A)|^2}{|\det_{S_{i-1}}(A)||\det_{S_{i+1}}(A)|}\right)}.\]

Since \(X_{n_k}\) converges in distribution to \(F\) we have that \(S_{n_k,i}\) converges in distribution to \(S_i\) the \(i\)-dimensional subspace of \(F\).
Because the logarithm of the determinant of \(A\) on any subspace is bounded between constant multiples of \(\log(\sigma_1(A))\) and \(\log(\sigma_d(A))\) both of which are integrable, we can pass to the limit (e.g. using dominated convergence after replacing \(S_{n_k,i}\) by a sequence with the same individual distributions but which converges almost surely, see \cite[Theorem 6.7]{billingsley})  obtaining
\[\lim\limits_{k \to +\infty}\E{\log\left(\frac{|\det_{S_{n_k,i}}(A)|^2}{|\det_{S_{i-1}}(A)||\det_{S_{i+1}}(A)|}\right)} = \E{\log\left(\frac{|\det_{S_{i}}(A)|^2}{|\det_{S_{i-1}}(A)||\det_{S_{i+1}}(A)|}\right)}.\]

Finally since \(\E{|\det_{S_j}(A)|} = \chi_1 + \cdots + \chi_j\) for \(j = 1,\ldots, d\), one obtains
\[\E{\log\left(\frac{|\det_{S_{i}}(A)|^2}{|\det_{S_{i-1}}(A)||\det_{S_{i+1}}(A)|}\right)} = \chi_i - \chi_{i+1},\]
which concludes the proof.
\end{proof}

\part{Exact dimensionality and dimension of conditional probabilities}

In this part of the article we will prove Theorem \ref{dimensiontheorem}.  We now specify notation and context that will be used throughout.

Recall that \(\mu\) is a probability on \(\GL(\R^d)\) with respect to which the logarithm of all singular values are integrable and \(\nu\) is a \(\mu\)-stationary probability on \(\Flags(\R^d)\).

A dimension \(i \in \lbrace 1,\ldots, d-1\rbrace\) is fixed throughout, \(\nu_i\) is the projection of \(\nu\) on the space \(\flags_i(\R^d)\) of incomplete flags missing their \(i\)-dimensional subspace.    It is assumed that \(\nu\) is the unique stationary probability with projection \(\nu_i\).

A disintegration \(F_i \mapsto \nu_{F_i}\) of \(\nu\) with respect to \(\nu_i\) is fixed (so \(\nu = \int \nu_{F_i} d\nu_i(F_i)\)).

We consider an i.i.d. sequence \((A(n))_{n \in \Z}\) with common distribution \(\mu\) and a stationary sequence of random random flags \((F(n))_{n \in \Z}\) with common distribution \(\nu\) such that
\[A(n+k)\cdots A(n)F(n) = F(n+k)\]
for all \(n \in \Z\) and \(k \ge 0\).   We will use \(S_j(n)\) for the \(j\)-dimensional subspace of the flag \(F(n)\) and \(F_i(n)\) as before for the incomplete flag obtained by removing the subspace \(S_i(n)\).

By hypothesis \(\nu\) is ergodic (i.e. extremal among stationary probabilities) this implies that the stationary sequence \(((F(n),A(n)))_{n \in \Z}\) is ergodic.

As before, Lyapunov exponents \(\chi_1,\ldots,\chi_d\) are defined by the equations
\[\chi_1 + \cdots + \chi_j = \E{\log\left(\left|\det_{S_i(n)}(A(n))\right|\right)}.\]

By Theorem \ref{gaptheorem} one has \(A\nu_{F_i(n)} \ll \nu_{F_i(n+1)}\) almost surely and 
\[0 \le \kappa_i = \E{\log\left(\frac{dA\nu_{F_i(n)}}{d\nu_{F_i(n+1)}}(F(n+1))\right)} \le \chi_i - \chi_{i+1}.\]

We assume from now on that \(\kappa_i > 0\).

\section{Non-atomicity of conditional measures}

Our first step in the proof of Theorem \ref{dimensiontheorem} is that \(\nu_{F_i(n)}\) is almost surely non-atomic (i.e. all points have measure zero).
 \begin{lemma}\label{nonatomicitylemma}
 Almost surely \(\nu_{F_i(n)}\) is non-atomic for all \(n\).
 \end{lemma}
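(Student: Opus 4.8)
The plan is to show that the maximal atom size of the conditional measures $\nu_{F_i(n)}$ is an invariant quantity that must be zero once $\kappa_i>0$. Concretely, I would define
\[
a_n = \max_{x}\nu_{F_i(n)}(\{x\}),
\]
which is well-defined (the supremum is attained because a probability measure on the circle fiber has at most countably many atoms and their masses are summable, so there is a largest one) and is a $\sigma(F_i(n))$-measurable random variable taking values in $[0,1]$. The key structural fact is the quasi-equivariance relation $A(n)\nu_{F_i(n)} \ll \nu_{F_i(n+1)}$ from Theorem \ref{gaptheorem}, together with the fact that $A(n)$ acts as a diffeomorphism on the fiber $\flags_{F_i(n)}(\R^d)\to\flags_{F_i(n+1)}(\R^d)$, hence carries atoms to atoms bijectively and preserves their masses. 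Therefore $A(n)\nu_{F_i(n)}$ has the same multiset of atom masses as $\nu_{F_i(n)}$, and since it is absolutely continuous with respect to $\nu_{F_i(n+1)}$, every atom of $A(n)\nu_{F_i(n)}$ is an atom of $\nu_{F_i(n+1)}$ of mass at least as large. This gives the pointwise inequality $a_{n+1}\ge a_n$ almost surely.

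By stationarity of the sequence $((F(n),A(n)))_{n\in\Z}$ the random variables $a_n$ all have the same distribution, so $a_{n+1}\ge a_n$ a.s.\ with $a_{n+1}\eqd a_n$ forces $a_{n+1}=a_n$ almost surely; call this common value $a$. I would then argue that $a$ is in fact a constant: by ergodicity of the stationary sequence it suffices to note that $a$ is shift-invariant, which it is since $a_{n+1}=a_n$ for all $n$. So $a$ equals a deterministic constant, and I must rule out $a>0$. If $a>0$, then for each $n$ there is a $\sigma(F_i(n))$-measurable selection of a point $x(n)$ in the fiber with $\nu_{F_i(n)}(\{x(n)\})=a$; in fact the \emph{set} $M(n)$ of maximal atoms is finite (at most $\lfloor 1/a\rfloor$ points) and $\sigma(F_i(n))$-measurable, and $A(n)$ maps $M(n)$ \emph{into} $M(n+1)$ (an atom of mass $a$ goes to an atom of $\nu_{F_i(n+1)}$ of mass $\ge a$, hence exactly $a$). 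Since $|M(n)|$ is stationary and $A(n)$ restricts to an injection $M(n)\hookrightarrow M(n+1)$, equality of cardinalities forces $A(n)M(n)=M(n+1)$ for all $n$, i.e.\ the finite set $M(n)$ is equivariant.

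Now I would derive a contradiction with $\kappa_i>0$. Normalizing the restriction $\frac{1}{|M(n)|a}\,\nu_{F_i(n)}|_{M(n)}$ gives a family of purely atomic probability measures $\mu_{F_i(n)}$ on the fibers which is genuinely equivariant: $A(n)\mu_{F_i(n)}=\mu_{F_i(n+1)}$ almost surely (the map permutes the equal-mass atoms). One checks that $F_i\mapsto\mu_{F_i}$ together with $\int\mu_{F_i}\,d\nu_i$ would be a stationary probability on $\flags(\R^d)$ projecting to $\nu_i$; but since $A(n)\mu_{F_i(n)}=\mu_{F_i(n+1)}$ exactly, the associated entropy (in the sense of Theorem \ref{gaptheorem}, computing $\E{\log\frac{dA\mu_{F_i}}{d\mu_{AF_i}}}$) vanishes --- indeed the Radon--Nikodym derivative is identically $1$. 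By the uniqueness hypothesis this stationary measure equals $\nu$, so $\nu_{F_i}=\mu_{F_i}$ is purely atomic; but then $\kappa_i$, computed via $\nu_{F_i}$, would have to both equal $0$ (since $A\nu_{F_i}=\nu_{AF_i}$ by the same argument) and be positive, a contradiction. Hence $a=0$, i.e.\ $\nu_{F_i(n)}$ is non-atomic almost surely, for all $n$ by stationarity.

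The main obstacle I anticipate is the measurable-selection and bookkeeping of the maximal-atom sets: verifying that $M(n)$ is $\sigma(F_i(n))$-measurable and finite with uniformly bounded cardinality, that $A(n)$ really does carry $M(n)$ into $M(n+1)$ preserving the mass $a$, and that the cardinalities match up so that the inclusion is onto. One must be careful that ``the largest atom'' is measurable (use that the atom masses, listed in decreasing order, are measurable functions of the measure, which is a standard fact for Polish-space-valued random measures) and that the whole argument goes through with the conditioning $\sigma$-algebra handled consistently, exactly as in the proof of Theorem \ref{gaptheorem}. The rest --- monotonicity of $a_n$, the stationarity/ergodicity upgrade to a constant, and the clash with uniqueness and $\kappa_i>0$ --- is then routine.
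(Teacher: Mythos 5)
The proposal contains a genuine gap at the crucial monotonicity step. You claim that since $A(n)\nu_{F_i(n)}\ll\nu_{F_i(n+1)}$, ``every atom of $A(n)\nu_{F_i(n)}$ is an atom of $\nu_{F_i(n+1)}$ of mass at least as large,'' and deduce $a_{n+1}\ge a_n$. Absolute continuity does give that every atom of $A(n)\nu_{F_i(n)}$ is an atom of $\nu_{F_i(n+1)}$, but it gives \emph{no} inequality between the masses: if $A(n)\nu_{F_i(n)}(\{y\})=m$ and $\nu_{F_i(n+1)}(\{y\})=m'>0$, the Radon--Nikodym derivative at $y$ is simply $m/m'$, which can be greater than $1$. (Compare two probabilities on $\{1,2\}$ with weights $(1/2,1/2)$ and $(3/4,1/4)$: the first is absolutely continuous with respect to the second, yet its atom at $2$ is larger.) The correct quantitative relation here is the opposite of what you want: since $\nu_{F_i(n+1)}=\E{A(n)\nu_{F_i(n)}\,\vert\,F_i(n+1)}$ and $A(n)$ preserves atom masses, one gets $a_{n+1}\le\E{a_n\,\vert\,F_i(n+1)}$, a supermartingale inequality with equal expectations by stationarity, from which $a_{n+1}=\E{a_n\vert F_i(n+1)}$ --- but that does not give $a_{n+1}=a_n$ pointwise, and the subsequent construction of the equivariant finite sets $M(n)$ (which again uses the false mass inequality to get $A(n)M(n)\subset M(n+1)$) does not go through.

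The paper's proof takes a different and more direct route that avoids tracking atom sizes across the fibration. By Birkhoff's theorem applied to the integrand defining $\kappa_i$, almost surely
\[
\kappa_i=\lim_{n\to\infty}\frac1n\log\Bigl(\frac{dA(n-1)\cdots A(0)\nu_{F_i(0)}}{d\nu_{F_i(n)}}(F(n))\Bigr).
\]
On the event $\nu_{F_i(0)}(\{F(0)\})>0$, the push-forward $A(n-1)\cdots A(0)\nu_{F_i(0)}$ has an atom of mass $\nu_{F_i(0)}(\{F(0)\})$ at $F(n)$, so the Radon--Nikodym derivative there equals the ratio $\nu_{F_i(0)}(\{F(0)\})/\nu_{F_i(n)}(\{F(n)\})$. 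Poincar\'e recurrence of the stationary sequence $n\mapsto\nu_{F_i(n)}(\{F(n)\})$ then gives a subsequence along which this ratio stays bounded, forcing $\kappa_i=0$, a contradiction. Your intuition that atoms would produce a stationary, purely atomic, zero-entropy family contradicting $\kappa_i>0$ is in the right spirit, but the mechanism you use to propagate a distinguished atom along the orbit is unsound; the paper instead propagates a \emph{single} sample atom $F(n)$ and uses recurrence, which sidesteps the need for any ordering of atom masses.
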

\begin{proof}
By ergodicity and one has
\[\kappa_i = \lim\limits_{n \to +\infty}\frac{1}{n}\log\left(\frac{dA(n-1)\cdots A(0)\nu_{F_i(0)}}{d\nu_{F_i(n)}}(F(n))\right),\]
almost surely.

Suppose for the sake of contradiction that \(\Pof{\nu_{F_i(0)}(F(0)) > 0} > 0\).  Conditioning on this event the equation above becomes
\[\kappa_i = \lim\limits_{n \to +\infty}\frac{1}{n}\log\left(\frac{\nu_{F_i(0)}(F(0))}{\nu_{F_i(n)}(F(n))}\right).\]

However, by Poincaré recurrence \(\nu_{F_i(n)}(F(n))\) is recurrent almost surely (i.e. almost surely there exists a subsequence such that \(\lim\limits_{k}\nu_{F_i(n_k)}(F(n_k)) = \nu_{F_i(0)}(F(0))\)).   This implies that \(\kappa_i = 0\) which contradicts the hypothesis that \(\kappa_i > 0\).  Hence, \(\nu_{F_i(0)}(F(0)) = 0\) almost surely, as claimed.
\end{proof}

\section{The multiplicative ergodic theorem}

From Theorem \ref{gaptheorem} and the hypothesis that \(\kappa_i > 0\) one obtains that \(\chi_i > \chi_{i+1}\).  We will now apply the multiplicative ergodic theorem of \cite{oseledets} to the mappings induced by the sequence \(A(n)\) between the quotient spaces \(S_{i+1}(n)/S_{i-1}(n)\) to obtain the following result:

\begin{lemma}\label{oseledetslemma}
 Almost surely for each \(n\) one has
 \[\lim\limits_{k \to +\infty}\frac{1}{k}\log\left(|\det_{S_i(n)}(A(n+k-1)\cdots A(n))|\right) = \chi_1 + \cdots + \chi_{i-1} + \chi_{i},\]
 and there exists a unique \(i\)-dimensional subspace \(S_i'(n)\) containing \(S_{i-1}(n)\) and contained in \(S_{i+1}(n)\) such that
 \[\lim\limits_{k \to +\infty}\frac{1}{k}\log\left(|\det_{S_i'(n)}(A(n+k-1)\cdots A(n))|\right) = \chi_1 + \cdots + \chi_{i-1} + \chi_{i+1}.\]
 
 Furthermore, \(S_i(n)\) and \(S_i'(n)\) are conditionally independent given \(F_i(n)\), and \(S_i(n) \neq S_i'(n)\) almost surely.
 
 Finally, the logarithm of the angle between the projections of \(S_i(n)\) and \(S_i'(n)\) to \(S_{i+1}(n)/S_{i-1}(n)\) is \(o(|n|)\) when \(n \to \pm \infty\). 
\end{lemma}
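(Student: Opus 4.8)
The plan is to derive everything from the multiplicative ergodic theorem of \cite{oseledets} applied to the two dimensional cocycle induced by the $A(n)$ on the bundle of quotient spaces $V(n) := S_{i+1}(n)/S_{i-1}(n)$, each fiber equipped with the inner product coming from its isometric identification with the orthogonal complement of $S_{i-1}(n)$ inside $S_{i+1}(n)$, exactly as in the proof of Lemma \ref{grassmannjacobian}. Writing $B(n)\colon V(n) \to V(n+1)$ for the isomorphism induced by $A(n)$ and $B^{(k)}(n) = B(n+k-1)\cdots B(n)$, one has $\|B(n)\| \le \sigma_1(A(n))$ and $\|B(n)^{-1}\| \le \sigma_d(A(n))^{-1}$ (since $B(n)$ is $A(n)$ followed by an orthogonal projection), so the integrability hypothesis on $\mu$ makes the multiplicative ergodic theorem applicable over the ergodic shift on $((F(m),A(m)))_{m\in\Z}$; if desired one measurably trivializes the bundle to reduce to a genuine matrix cocycle. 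For a line $W \subseteq V(n)$ with preimage $\widehat W$ in $S_{i+1}(n)$, the identities from the end of the proof of Lemma \ref{grassmannjacobian} give $|\det_W(B^{(k)}(n))| = |\det_{\widehat W}(A(n+k-1)\cdots A(n))|\,/\,|\det_{S_{i-1}(n)}(A(n+k-1)\cdots A(n))|$ and likewise for $|\det(B^{(k)}(n))|$ with $\widehat W$ replaced by $S_{i+1}(n)$. Then Birkhoff's ergodic theorem, applied in both time directions to the additive cocycles $\log|\det_{S_j(\cdot)}(A(\cdot))|$ for $j\in\{i-1,i,i+1\}$ (whose integrals are $\chi_1+\cdots+\chi_j$), yields at once the first displayed limit of the lemma, the fact that $|\det(B^{(k)}(n))|$ grows at exponential rate $\chi_i+\chi_{i+1}$, and the fact that the covariant line $\overline{S_i}(n):=S_i(n)/S_{i-1}(n)$ grows at rate $\chi_i$ forward and at rate $-\chi_i$ backward.

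Next I would match these with the Oseledets data. Let $\lambda_1 \ge \lambda_2$ be the exponents of $B$, so $\lambda_1+\lambda_2 = \chi_i+\chi_{i+1}$. In a two dimensional Oseledets decomposition every line grows at rate $\lambda_1$ unless it equals the slow line (rate $\lambda_2$); since $\overline{S_i}(n)$ is a measurable covariant line of forward rate $\chi_i$, this forces $\chi_i\in\{\lambda_1,\lambda_2\}$, and combining with $\lambda_1\ge\lambda_2$ and the strict gap $\chi_i>\chi_{i+1}$ (valid because $\kappa_i>0$, by Theorem \ref{gaptheorem}) gives $\lambda_1=\chi_i>\lambda_2=\chi_{i+1}$, so the Oseledets splitting $V(n)=E^+(n)\oplus E^-(n)$ is genuine. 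Defining $S_i'(n)$ as the preimage of the slow line $E^-(n)$ in $S_{i+1}(n)$, it is by uniqueness of the slow line the unique $i$-dimensional subspace between $S_{i-1}(n)$ and $S_{i+1}(n)$ of submaximal growth, and the displayed determinant identity converts the rate $\chi_{i+1}$ of $E^-(n)$ into the second displayed limit. Because $\overline{S_i}(n)$ grows at rate $\chi_i\neq\chi_{i+1}$ it is not the slow line, so $S_i(n)\neq S_i'(n)$ almost surely. For conditional independence given $F_i(n)$: the fibers $V(m)$ and maps $B(m)$ for $m\ge n$ are determined by $S_{i-1}(n),S_{i+1}(n)$ and $A(n),A(n+1),\dots$, so $E^-(n)$ and hence $S_i'(n)$ is a measurable function of $F_i(n)$ and $(A(m))_{m\ge n}$; on the other hand $S_i(n)$ has conditional distribution $\nu_{F_i(n)}$ given $F_i(n)$ and is conditionally independent of $(A(m))_{m\ge n}$, since uniqueness of the stationary measure $\nu$ forces the invariant measure on the associated skew product to be past-generated and hence $F(n)$ to be independent of the future innovations $A(n),A(n+1),\dots$; combining the two gives the claim.

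It remains to prove the last assertion. I would argue that $\overline{S_i}(n)$ is in fact the fast Oseledets subspace $E^+(n)$: we have just seen that its backward growth rate is $-\chi_i=-\lambda_1$, and in two dimensions $E^+(n)$ is precisely the set of vectors of backward growth rate $\le -\lambda_1$, so $\overline{S_i}(n)=E^+(n)$ almost surely. This identification — rather than mere transversality of $\overline{S_i}(n)$ to $E^-(n)$ — is essential, since a generic line transverse to $E^-(n)$ collapses onto $E^-(n)$ at an exponential rate under backward iteration, which would make the claim fail as $n\to-\infty$. The angle in question is then the angle between the Oseledets subspaces $E^+(n)$ and $E^-(n)$ along the orbit, and its logarithm is $o(|n|)$ by the tempering (subexponential decay of the angle between Oseledets subspaces) that is part of the multiplicative ergodic theorem. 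The main obstacle, in my view, is the twin identification of the exponents of $B$ as $\chi_i,\chi_{i+1}$ and of $\overline{S_i}(n)$ as the fast subspace $E^+(n)$ — both relying on the strict gap $\chi_i>\chi_{i+1}$ coming from $\kappa_i>0$ — whereas the tempering estimate and the conditional independence are then routine.
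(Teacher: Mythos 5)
Your proof is correct and follows essentially the same approach as the paper: apply Oseledets to the two-dimensional quotient cocycle over $V(n) = S_{i+1}(n)/S_{i-1}(n)$, use Birkhoff to pin down the exponents and to identify the projection of $S_i(n)$ with the unstable Oseledets line via its backward growth rate (precisely the point you correctly flag as essential for the tempering estimate), and define $S_i'(n)$ through the stable line. The only small divergence is the conditional-independence step: the paper argues that $F_i(n)$ is $\sigma(A(m)\colon m<n)$-measurable so that past and future innovations are conditionally independent given $F_i(n)$, whereas you argue that $S_i(n)$ is conditionally independent of $(A(m))_{m\ge n}$ given $F_i(n)$ and that $S_i'(n)$ is a function of $F_i(n)$ together with the future; your variant is in fact slightly cleaner, since it rests only on $F(n)\perp(A(m))_{m\ge n}$, which is built into the natural-extension construction of the stationary process, rather than on past-measurability of $F_i(n)$ — note that your appeal to uniqueness and ``past-generation'' to justify this independence is therefore unnecessary and somewhat misattributed, though the conclusion you draw from it is right.
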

\begin{proof}
 For each \(n\) consider the quotient space \(V(n) = S_{i+1}(n)/S_{i-1}(n)\) with the induced inner product coming from \(\R^d\),  let \(E^u(n)\) be the one-dimensional subspace in \(V(n)\) which is the projection of \(S_i(n)\), and let \(T(n): V(n) \to V(n+1)\) be mapping induced by \(A(n)\). 
 
 Notice that almost surely each \(V(n)\) is isometric to \(\R^2\) with the usual inner product.  Furthermore the random sequence
\[\cdots \stackrel{T(n-1)}{\mapsto} (V(n),E^u(n)) \stackrel{T(n)}{\mapsto} (V(n+1),E^u(n+1)) \stackrel{T(n+1)}{\mapsto} \cdots\]
is stationary and ergodic.

 One has
 \[\E{\log\left(\left|\det_{E^u(n)}(T_n)\right|\right)} = \chi_i\]
 which implies by Birkhoff's theorem that almost surely
 \[\lim\limits_{k \to +\infty}\frac{1}{k}\log\left(\|T(n-k)^{-1}\cdots T(n-1)^{-1}v\|\right) = -\chi_i\]
 and
 \[\lim\limits_{k \to +\infty}\frac{1}{k}\log\left(\|T(n+k-1)\cdots T(n)v\|\right) = \chi_i\]
 for all \(v \in E^u(n) \setminus \lbrace 0\rbrace\).

 On the other hand
 \[\E{\log\left(\left|\det(T_n)\right|\right)} = \chi_i + \chi_{i+1}.\]
 which implies that almost surely
 \[\lim\limits_{k \to +\infty}\frac{1}{k}\log\left(\left|\det(T(n+k-1)\cdots T(n))\right|\right) = \chi_i + \chi_{i+1}.\]
 
 By hypothesis \(\kappa_i > 0\) which implies by Theorem \ref{gaptheorem} that \(\chi_i > \chi_{i+1}\).  Hence, one obtains from the multiplicative ergodic theorem of \cite{oseledets} that almost surely
 \[E^u(n) = \lbrace 0 \rbrace \cup \lbrace v \in V(n): \lim\limits_{k \to +\infty}\frac{1}{k}\log\left(\|T(n-k)^{-1}\cdots T(n-1)^{-1}v\|\right) = -\chi_i\rbrace\]
 and 
 \[E^s(n) = \lbrace 0 \rbrace \cup \lbrace v \in V(n): \lim\limits_{k \to +\infty}\frac{1}{k}\log\left(\|T(n+k-1)\cdots T(n)v\|\right) = \chi_{i+1}\rbrace,\]
 are complementary one-dimensional subspaces, and the angle between them is \(e^{o(n)}\).

 From the equations above it follows that \(E^u(n)\) is \(\sigma(F_i(n),A(n-1),A(n-2),\ldots)\)-measurable, while \(E^s(n)\) is \(\sigma(F_i(n),A(n),A(n+1),\ldots)\)-measurable.  Since \(F_i(n)\) is \(\sigma(A(n-1),A(n-2),\ldots)\)-measurable one has that \((A(n-1),A(n-2),\ldots)\) and \((A(n),A(n+1),\ldots)\) are conditionally independent given \(F_i(n)\).   
 In particular, conditioned on \(F_i(n)\) one has that \(E^u(n)\) and \(E^s(n)\) are independent.

 Setting \(S_i'(n)\) to be the subspace in \(S_{i+1}(n)\) which projects to \(E^s(n)\) in \(S_{i+1}(n)/S_{i-1}(n)\) one obtains the desired result. 
\end{proof}

\section{Proof of Theorem \ref{dimensiontheorem}}

\subsection{Random circle diffeomorphisms}

We fix from now on a Borel measurable projection from \(\flags_i(\R^d)\) to \(\R^2\) which consists of mapping \(S_{i+1}/S_{i-1}\) to \(\R^2\) isometrically (where \(S_j\) denotes the \(j\)-dimensional subspace of the flag).  Furthermore we fix an isometry between the unit circle \(S^1\) with the usual arc-length distance scaled by one half \(\dist\), and the space of one-dimensional subspaces of \(\R^2\) with the distance given by the angle.  The composition of these mappings will be used to identify each fiber of the projection from \(\Flags(\R^d)\) to \(\Flags_i(\R^d)\) with the unit circle.  Equivalently, given an incomplete flag \(F_i = (S_0,\ldots,S_d)\) we have chosen an isometry from the projective space of \(S_{i+1}/S_{i-1}\) to the unit circle, and therefore each \(i\)-dimensional subspace between \(S_{i-1}\) and \(S_{i+1}\) corresponds to a point on the unit circle.

With these identifications let \(\F_n = \sigma(F_i(n))\), \(\nu_n\) be  be the projection of \(\nu_{F_i(n)}\) to \(S^1\), \(x_n\) be the projection of \(S_i(n)\) to \(S^1\), \(y_n\) be the projection of \(S_i'(n)\) (given by Lemma \ref{oseledetslemma}) to \(S^1\), \(T_n\) the diffeomorphism of \(S^1\) obtained by projecting the action of \(A(n)\) between \(S_{i+1}(n)/S_{i-1}(n)\) and \(S_{i+1}(n+1)/S_{i-1}(n+1)\), and for convenience let \(\kappa = \kappa_i\) and \(\chi = \chi_i - \chi_{i+1}\).   Finally, we let \(\eta\) be the rotationally invariant probability on the unit circle.

The proof of Theorem \ref{dimensiontheorem} will proceed as follows:  We will construct a sequence of random intervals \(I_n\) containing \(x_n\) and such that \(T_{-1}\circ \cdots \circ T_{-n}(I_{-n})\) is roughly of size \(e^{-\chi n}\).  We will then show that \(\nu_0(T_{-1}\circ \cdots \circ T_{-n}(I_{-n}))\) is roughly \(e^{-\kappa n}\).  These two facts will yield that the local dimension of \(\nu_0\) at \(x_0\) is almost surely \(\kappa/\chi\) so that in particular that \(\nu_0\) is exact dimensional.  

A few technical issues arise which we have concealed with the word `roughly' in the previous paragraph.  For example, the estimates for the measure of the intervals will hold only for some values of  \(n\), but these values are sufficiently dense to imply the needed dimension estimates.

We begin with a simple consequence of lemma \ref{oseledetslemma}.
\begin{proposition}\label{simplesizeprop}
 Let \(k \in \Z\) and \(\epsilon \in (0,1)\) be fixed and let \(I = S^{1} \setminus B_{\epsilon \dist(x_k,y_k)}(x_k)\).
 
 Then the length of \(T_{k-n}^{-1}\circ \cdots \circ T_{k-1}^{-1}(I)\) converges to \(0\) exponentially quickly when \(n \to +\infty\).
\end{proposition}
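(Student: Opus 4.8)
The plan is to reduce the statement to a contraction estimate for the inverse cocycle $T_{k-1}^{-1}\circ\cdots\circ T_{k-n}^{-1}$ acting on the circle, away from the expanding direction $x_k$. The key geometric fact is that each $T_n$ is the projectivization of a $2\times 2$ matrix (the map induced by $A(n)$ on $V(n)=S_{i+1}(n)/S_{i-1}(n)$), and Lemma \ref{oseledetslemma} identifies $E^u(n)$ (projecting to $x_n$) and $E^s(n)$ (projecting to $y_n$) as the Oseledets directions with exponents $\chi_i$ and $\chi_{i+1}$, with $\chi_i>\chi_{i+1}$ since $\kappa_i>0$. Under the backward cocycle $T(n-k)^{-1}\circ\cdots\circ T(n-1)^{-1}$ the direction $E^u$ is the contracting one: vectors in $E^u(k)$ shrink at rate $-\chi_i$ and vectors in $E^s(k)$ at rate $-\chi_{i+1}$, so relative to $E^u$ the backward dynamics contracts the whole circle toward $x_{k-n}$ at exponential rate roughly $-(\chi_i-\chi_{i+1})=-\chi$.

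Concretely, I would argue as follows. First, since the statement is about a fixed $k$ and the cocycle over the stationary ergodic system $((F(n),A(n)))_{n}$, it suffices to treat $k=0$ (translate). Write $B_n = T_{-1}^{-1}\circ\cdots\circ T_{-n}^{-1}$, a projective map of $S^1\cong\mathbb P(V(0))$, realized by a matrix $M_n\in\GL(V(0))$ with $M_n = (T(-n)\cdots T(-1))^{-1}$ up to the isometric identifications. The length of an arc under a projective map is controlled by the derivative of the Möbius-type action; for a Möbius map of the circle the derivative at a point $x$ away from the image of the repelling line is bounded above by (a constant times) the square of the ratio $\sigma_2(M_n)/\sigma_1(M_n)$ — more precisely, $|B_n'(x)|$ is comparable to $\sigma_1(M_n)\sigma_2(M_n)\cdot\|M_n^{-1}u_x\|^{-2}$ where $u_x$ is a unit vector in direction $x$, and on the complement of a neighborhood of the most-contracted direction this is $\lesssim (\sigma_2(M_n)/\sigma_1(M_n))^2 \cdot(\text{angle factor})$. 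The set $I=S^1\setminus B_{\epsilon\dist(x_0,y_0)}(x_0)$ is, by construction, at angular distance at least $c\,\epsilon\dist(x_0,y_0)$ from $x_0=\mathbb P(E^u(0))$, which is exactly the direction along which $M_n$ contracts most; so on the image side, no point of $I$ is carried by $M_n$ near the worst direction. Hence on $I$ one gets $|B_n'| \lesssim C_n\,(\sigma_2(M_n)/\sigma_1(M_n))^2$ where $C_n$ absorbs the angle between the Oseledets directions and the $\epsilon$-dependence. By Lemma \ref{oseledetslemma}, $\frac1n\log(\sigma_1(M_n)) \to \chi_i-(\chi_{i-1}+\cdots)$-type bookkeeping — cleaner to say: $\frac1n\log\|M_n^{-1}|_{E^u(0)}\|\to\chi_i$ and $\frac1n\log|\det M_n^{-1}|\to -(\chi_i+\chi_{i+1})$, hence $\frac1n\log(\sigma_2(M_n)/\sigma_1(M_n))\to -(\chi_i-\chi_{i+1})=-\chi<0$. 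The angle factor $C_n$ grows only sub-exponentially: the last assertion of Lemma \ref{oseledetslemma} says $\log(\text{angle between }x_n,y_n) = o(|n|)$, and a similar $o(n)$ control holds for the angles appearing in the derivative formula. Therefore $\sup_{x\in I}|B_n'(x)|$ decays exponentially, and since $B_n(I)$ is an arc (or two arcs) obtained by integrating $B_n'$ over $I$, its length is at most $2\pi\sup_I|B_n'|$, which goes to $0$ exponentially.

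I expect the main obstacle to be bookkeeping the sub-exponential error terms rigorously: the derivative of the projective map at a point of $I$ depends not only on the singular value ratio of $M_n$ but on the angle between the direction being evaluated (a point of $B_n(I)$, i.e. of $(T(-n)\cdots T(-1))(I)$ on the far side) and the most-contracted singular direction of $M_n$, and one must show this angle is bounded below by $e^{-o(n)}$. This follows from Lemma \ref{oseledetslemma} (the $o(|n|)$ angle control between $E^u$ and $E^s$, together with the fact that the singular directions of $M_n$ converge to the Oseledets directions, again with $e^{o(n)}$ errors by the standard proof of the multiplicative ergodic theorem), but making it clean is the delicate point; everything else is routine estimation of Möbius derivatives on $\mathbb P(\mathbb R^2)\cong S^1$. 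One could alternatively avoid derivative computations entirely by noting that $T_{k-1}^{-1}\circ\cdots\circ T_{k-n}^{-1}$ maps the complement of any fixed neighborhood of $x_k$ into a shrinking neighborhood of $x_{k-n}$, and directly estimating the diameter of that image via the action on two well-separated points — but the derivative bound gives the exponential rate most transparently.
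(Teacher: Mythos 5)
Your approach is genuinely different from the paper's, and worth comparing. You work with the singular value decomposition of the backward cocycle $M_n$ and estimate the derivative of the induced Möbius map on $I$; the paper instead picks auxiliary inner products on the domain and range of $L_n = T_{k-n}^{-1}\circ\cdots\circ T_{k-1}^{-1}$ in which the Oseledets splitting $E^u\oplus E^s$ is orthogonal. In those coordinates $L_n$ is diagonal with entries $e^{-\chi_i n + o(n)}$, $e^{-\chi_{i+1}n + o(n)}$, so the standard cone contraction estimate for diagonal matrices immediately gives that angles inside $C$ shrink by $e^{-(\chi_i-\chi_{i+1})n + o(n)}$, and the passage back to the standard inner product only costs an $e^{o(n)}$ factor by the angle control in Lemma~\ref{oseledetslemma}. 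The payoff of the paper's trick is that it never needs to locate the singular directions of $M_n$.

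This is precisely where your version has a real gap, which you do acknowledge but should not wave away. Your derivative bound needs $I$ to be bounded away from the \emph{most-contracted singular direction} of $M_n$, whereas $I$ is by construction only bounded away from the Oseledets direction $E^u(k)=x_k$. Lemma~\ref{oseledetslemma} as stated controls the angle between $E^u(n)$ and $E^s(n)$, but says nothing about the singular directions of the partial products; that the latter converge to the Oseledets flag with $e^{o(n)}$ (in fact exponential) error is true, but it is an extra ingredient that the paper's inner-product normalization renders unnecessary. If you want to pursue the singular-value route you must state and use that convergence explicitly.

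A few technical slips that should be fixed even though they do not change the qualitative conclusion: the derivative of the projective action of $M_n$ at the line $\R u_x$ is $|\det M_n|/\|M_n u_x\|^2$ (not $\|M_n^{-1}u_x\|^{-2}$), and for $u_x$ at angle $\alpha$ from the most-contracted direction this is of order $(\sigma_2/\sigma_1)\sin^{-2}\alpha$, \emph{not} $(\sigma_2(M_n)/\sigma_1(M_n))^2$ — the claimed square would give decay rate $2\chi$, which is twice the correct rate $\chi=\chi_i-\chi_{i+1}$ established in Lemma~\ref{lengthlemma}. Since $\det M_n^{-1}$ corresponds to the \emph{forward} cocycle, $\frac1n\log|\det M_n^{-1}|\to +(\chi_i+\chi_{i+1})$, not $-(\chi_i+\chi_{i+1})$. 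Finally, in your proposed derivative-free alternative, the complement of a neighborhood of $x_k$ is carried by the backward cocycle into a shrinking neighborhood of $y_{k-n}$, not $x_{k-n}$: the backward dynamics contracts $E^u$ fastest, so it attracts everything else toward the image of $E^s$.
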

\begin{proof}
 The interval \(I\) corresponds to a cone \(C\) of one dimensional subspaces in \(S_{i+1}(k)/S_{i-1}(k)\) whose angle (with respect to the standard inner product inherited from \(\R^d\)) with the projection \(E^u\) of \(S_i(k)\) is larger than \(\epsilon\) times the angle between \(E^u\) and the projection \(E^s\) of \(S_i'(k)\).

 By lemma \ref{oseledetslemma}, under the action the linear mapping \(L_n\) corresponding to \(T_{k-n}^{-1}\circ \cdots \circ T_{k-1}^{-1}\) the norm of vectors in \(E^u\) are multiplied by a factor of \(e^{-\chi_i n + o(n)}\) while those in \(E^s\) are multiplied by a factor of \(e^{-\chi_{i+1} n + o(n)}\).

 We fix on the domain of \(L_n\) the inner product for which the norm on \(E^u,E^s\) coincides with the standard one, but for which these subspaces are orthogonal.
 
 Similarly on the range of \(L_n\) we pick the inner product where \(L_nE^u,L_nE^s\) are orthogonal and the restriction of the norm on both subspaces coincides with the usual one.
 
 With respect to these inner products the angle between any two subspaces in \(C\) decreases by a factor of \(e^{-(\chi_i - \chi_{i+1})n + o(n)}\) under \(L_n\).
 
 However, once again by lemma \ref{oseledetslemma}, the angle between \(L_nE^u,L_nE^s\) is \(e^{o(n)}\) for the standard inner product.  This implies that, measured with the standard inner product the angle between any two subspaces of \(C\) decreases by the same factor up to a multiplicative \(e^{o(n)}\).
\end{proof}

\subsection{Stationary intervals}

We now construct the sequence of intervals that will be used in our argument.  The key points for what follows are that: the construction is stationary, the intervals contain \(x_n\) but not \(y_n\), their size is controlled by \(\dist(x_n,y_n)\), and frequently \(\nu_n(I_n)\) is not close to zero.
\begin{lemma}[Stationary intervals]\label{stationaryintervalslemma}
Setting 
\[I_n = S^1 \setminus B_{\frac{1}{2}\dist(x_n,y_n)}(y_n),\]
one has \(\Pof{\nu_n(I_n) \ge 1/2} \ge 1/2\) for all \(n\). 
\end{lemma}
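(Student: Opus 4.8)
The plan is to condition on the $\sigma$-algebra $\mathcal{G}_n := \sigma(\F_n, y_n)$ and exploit the conditional independence of $x_n$ and $y_n$ given $\F_n = \sigma(F_i(n))$ supplied by Lemma \ref{oseledetslemma}. Recall that the conditional distribution of $x_n$ given $\F_n$ is $\nu_n$. Since $S_i(n)$ and $S_i'(n)$, and hence $x_n$ and $y_n$, are conditionally independent given $F_i(n)$, the conditional distribution of $x_n$ given $\mathcal{G}_n$ is still $\nu_n$. Thus, conditionally on $\mathcal{G}_n$, the measure $\nu_n$ and the point $y_n$ are fixed while $x_n$ is distributed according to $\nu_n$, and $I_n$ depends on $x_n$ only through $\dist(x_n,y_n)$. (By stationarity of $((F(n),A(n)))_n$ it would suffice to treat one value of $n$, but the argument below is valid verbatim for each $n$.)

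Next I would study the radial distribution function $h(s) := \nu_n(\overline{B}_s(y_n))$ for $s \ge 0$. By Lemma \ref{nonatomicitylemma} the measure $\nu_n$ is almost surely non-atomic; since a metric sphere in $S^1$ consists of at most two points, this makes $h$ continuous and non-decreasing with $h(0) = 0$, $h(s) = 1$ for $s$ large, and $\nu_n(B_s(y_n)) = h(s)$ for every $s$ (open and closed balls agreeing). Consequently there is a $\mathcal{G}_n$-measurable radius $\rho = \rho(\nu_n,y_n) \in (0,\infty)$ with $h(\rho) = \frac{1}{2}$ and with $h(s) \le \frac{1}{2}$ if and only if $s \le \rho$. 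Non-atomicity is used precisely to obtain $\rho > 0$ and the clean description of the level set $\{h \le \frac{1}{2}\}$; it is genuinely needed, as the statement would fail if $\nu_n$ carried an atom of mass larger than $\frac{1}{2}$ at $y_n$.

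Finally I would unwind the definition $I_n = S^1 \setminus B_{\frac{1}{2}\dist(x_n,y_n)}(y_n)$. One has $\nu_n(I_n) = 1 - \nu_n\big(B_{\frac{1}{2}\dist(x_n,y_n)}(y_n)\big)$, so $\nu_n(I_n) \ge \frac{1}{2}$ is equivalent to $h\big(\tfrac{1}{2}\dist(x_n,y_n)\big) \le \frac{1}{2}$, which by the previous paragraph is equivalent to $\tfrac{1}{2}\dist(x_n,y_n) \le \rho$, i.e. to $x_n \in \overline{B}_{2\rho}(y_n)$. Since $\overline{B}_{2\rho}(y_n)$ is $\mathcal{G}_n$-measurable and the conditional law of $x_n$ given $\mathcal{G}_n$ is $\nu_n$,
\[
\Pof{\nu_n(I_n) \ge 1/2 \mid \mathcal{G}_n} = \nu_n\big(\overline{B}_{2\rho}(y_n)\big) \ge \nu_n\big(\overline{B}_{\rho}(y_n)\big) = h(\rho) = \frac{1}{2}
\]
almost surely, using $\overline{B}_{\rho}(y_n) \subseteq \overline{B}_{2\rho}(y_n)$. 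Taking expectations yields $\Pof{\nu_n(I_n) \ge 1/2} \ge 1/2$. The only delicate points are the strict positivity of $\rho$ and the open/closed ball bookkeeping around the level $\frac{1}{2}$; both follow at once from the continuity of $h$, that is, from non-atomicity, and everything else is a direct application of the conditional independence together with the identification of the conditional law of $x_n$.
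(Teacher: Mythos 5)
Your proof is correct and takes essentially the same approach as the paper: non-atomicity (Lemma~\ref{nonatomicitylemma}) provides a positive median-type radius around $y_n$, and the conditional independence of $x_n$ and $y_n$ given $\F_n$ from Lemma~\ref{oseledetslemma}, combined with the identification of the conditional law of $x_n$ as $\nu_n$, gives the probability bound. The paper works directly with the smallest radius $r_n$ satisfying $\nu_n(B_{r_n}(y_n)) = 1/2$ rather than via the distribution function $h$ and the largest such radius, but this is only cosmetic bookkeeping.
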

\begin{proof}
Since almost surely \(\nu_{n}\) is non-atomic there is a smallest positive radius \(r_n\) such that \(\nu_n(B_{r_n}(y_n)) = \nu_n(S^1 \setminus B_{r_n}(y_n)) = 1/2\).

By lemma \ref{oseledetslemma}, conditioned on \(\F_n\) one has that \(x_n\) has distribution \(\nu_n\) and is independent from \(r_n\) and \(y_n\).  Therefore \(\P(x_n \in B_{r_n}(y_n)|\F_n) = \nu_n(B_{r_n}(y_n)) = 1/2\) and taking expected value \(\P(x_n \in B_{r_n}(y_n)) = 1/2\).

In the event that \(x_n \in B_{r_n}(y_n)\) one has that \(S^1 \setminus B_{r_n}(y_n)   \subset I_n\) and therefore that \(\nu_n(I_n) \ge 1/2\).  This proves the claim.
\end{proof}

What remains is to estimate the size and \(\nu_0\) probability of the sequence \(T_{-1}\circ \cdots \circ T_{-n}(I_{-n})\).

\begin{figure}[H]
\centering
\includegraphics[scale=0.5]{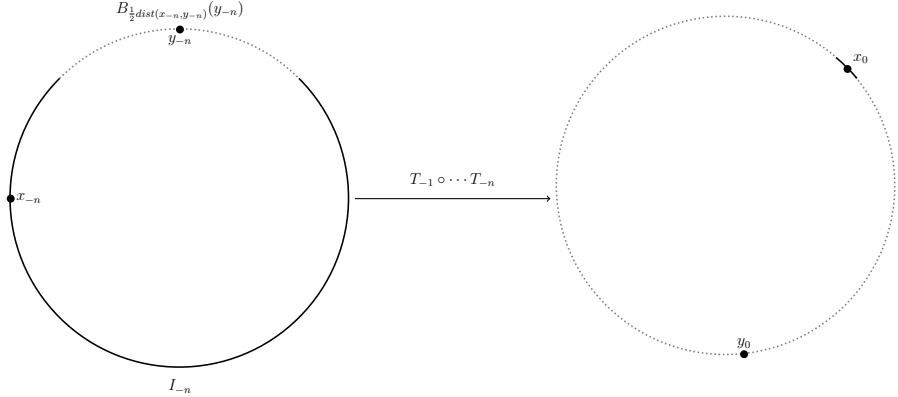}
\caption{\label{fig1}For large \(n\) the transformation \(T_{-1}\circ \cdots \circ T_{-n}\) contracts the large interval \(I_{-n}\) to an interval of size roughly \(e^{-\chi n}\) (see Lemma \ref{lengthlemma}).  
With frequency at least \(1/2\) the \(\nu_0\)-measure of the image interval is roughly \(e^{-\kappa n}\) (see lemmas \ref{stationaryintervalslemma} and \ref{probabilitylemma}).}
\end{figure}

\subsection{Length of distinguished intervals}

The point of what follows is that the intervals \(T_{-1}\circ \cdots \circ T_{-n}(I_{-n})\) contain \(x_0\) and are roughly of size \(e^{-\chi n}\).

We will use the following result which is essentially Maker's theorem \cite[Theorem 1]{maker1940} or \cite[Theorem 1]{breiman1957}.
\begin{theorem*}[Maker's theorem]
 Let \((X_{n,k})_{k,n \in \Z}\) be a family of random variables which is stationary in the sense that its distribution equals that of \((Y_{k,n})_{k,n \in \Z}\) where \(Y_{k,n} = X_{k+1,n}\).
 
 Suppose that the limit \(X_k = \lim\limits_{n \to +\infty}X_{k,n}\) exists almost surely and that \(\E{\sup\limits_{n}|X_{k,n}|} < +\infty\) for all (or equivalently due to stationary, for some) \(k\).
 
 Then \(\lim\limits_{n \to +\infty}\frac{1}{n}\sum\limits_{k = 0}^{n-1}X_{-k,n-k} = \lim\limits_{n \to +\infty}\frac{1}{n}\sum\limits_{k = 0}^{n-1}X_{-k}\) almost surely.
\end{theorem*}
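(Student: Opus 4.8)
The plan is to deduce the statement from Birkhoff's ergodic theorem together with a truncation argument that controls the diagonal discrepancy \(X_{-k,n-k}-X_{-k}\).

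First I would note that, by the domination hypothesis, \(|X_k| = |\lim_n X_{k,n}| \le \sup_n |X_{k,n}|\) is integrable; hence the stationary sequence \((X_{-k})_{k \ge 0}\) satisfies the hypotheses of Birkhoff's theorem, so \(\frac1n\sum_{k=0}^{n-1}X_{-k}\) converges almost surely — say to \(\E{X_0 \mid \mathcal I}\), where \(\mathcal I\) is the invariant \(\sigma\)-algebra of the shift \(k \mapsto k+1\). It therefore suffices to prove that
\[
\lim_{n\to\infty}\frac1n\sum_{k=0}^{n-1}\bigl|X_{-k,n-k}-X_{-k}\bigr| = 0 \qquad \text{almost surely.}
\]

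For this I would introduce the tail suprema \(Z_{k,M} = \sup_{j\ge M}\bigl|X_{-k,j}-X_{-k}\bigr|\) and \(W_k = Z_{k,0}\). By stationarity of the family \((X_{m,n})\) under \(m \mapsto m+1\), both \((Z_{k,M})_{k}\) (for fixed \(M\)) and \((W_k)_k\) are stationary; by the convergence hypothesis \(Z_{0,M}\downarrow 0\) almost surely as \(M\to\infty\); and \(W_0 \le 2\sup_n |X_{0,n}|\) is integrable, so each \(Z_{0,M}\) is integrable as well. Fix \(M\) and split the sum according to whether \(n-k\ge M\) or not, bounding the summand by \(Z_{k,M}\) in the first case and by \(W_k\) in the second:
\[
\frac1n\sum_{k=0}^{n-1}\bigl|X_{-k,n-k}-X_{-k}\bigr| \ \le\ \frac1n\sum_{k=0}^{n-1}Z_{k,M} \ +\ \frac1n\sum_{k=n-M+1}^{n-1}W_k.
\]
By Birkhoff the first term on the right converges almost surely to \(\E{Z_{0,M}\mid\mathcal I}\). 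The second term I would rewrite as \(\frac1n\sum_{k=0}^{n-1}W_k - \frac1n\sum_{k=0}^{n-M}W_k\), both of which converge almost surely to \(\E{W_0\mid\mathcal I}\) by Birkhoff (using integrability of \(W_0\)), so this term tends to \(0\). Hence \(\limsup_n \frac1n\sum_{k=0}^{n-1}\bigl|X_{-k,n-k}-X_{-k}\bigr| \le \E{Z_{0,M}\mid\mathcal I}\) almost surely for every \(M\); letting \(M\to\infty\) and applying the conditional form of dominated convergence (\(0 \le Z_{0,M}\downarrow 0\), dominated by the integrable \(W_0\)) makes the right-hand side vanish, which completes the argument.

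The step I expect to be the main obstacle is the handling of the boundary block \(\frac1n\sum_{k=n-M+1}^{n-1}W_k\): a pointwise bound is useless since those particular terms \(W_k\) need not be small, so one is forced to view this block as a difference of two Cesàro averages of the \emph{same} stationary sequence and let Birkhoff cancel it. The only other delicate point is that, ergodicity not being assumed, all Birkhoff limits are conditional expectations over \(\mathcal I\), so the final passage \(M\to\infty\) must be carried out in the conditional (rather than ordinary) version of the dominated convergence theorem.
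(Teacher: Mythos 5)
Your proof is correct and follows essentially the same truncation-plus-Birkhoff strategy as the paper (which follows Breiman): split off the Cesàro average of the limits, bound the diagonal discrepancy by the tail supremum \(\sup_{m\ge M}|X_{-k,m}-X_{-k}|\), apply Birkhoff to that stationary integrable sequence, and let \(M\to\infty\). You are in fact slightly more careful than the paper's displayed inequality, which tacitly ignores the boundary block \(k\in\{n-M+1,\dots,n-1\}\) where \(n-k<M\); your Cesàro-difference cancellation of \(\frac1n\sum_{k=n-M+1}^{n-1}W_k\) makes that step explicit. The paper finishes slightly differently by taking plain expectations: since \(0\le\limsup_n|Y_n|\le Z_M\) and \(\E{Z_M}\to0\), one gets \(\limsup_n|Y_n|=0\) a.s.\ without invoking the conditional dominated-convergence theorem, but this is a cosmetic variant of your final passage.
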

\begin{proof}
By Birkhoff's ergodic theorem
\[X = \lim\limits_{n \to +\infty}\frac{1}{n}\sum\limits_{k = 0}^{n-1}X_{-k},\]
exists almost surely and \(\E{X} = \E{X_0}\) is finite.

Following \cite[Theorem 1]{breiman1957} we write
\[\frac{1}{n}\sum\limits_{k = 0}^{n-1}X_{-k,n-k} = \frac{1}{n}\sum\limits_{k = 0}^{n-1}X_{-k} + \frac{1}{n}\sum\limits_{k = 0}^{n-1}(X_{-k,n-k}-X_{-k}).\]

The first term converges to \(X\) almost surely.  Letting \(Y_n\) be the second term notice that for any fixed \(N\) we have
\[\limsup\limits_{n \to +\infty}|Y_n| = \limsup\limits_{n \to +\infty}\left|\frac{1}{n}\sum\limits_{k = 0}^{n-1}(X_{-k,n-k}-X_{-k})\right| \le \lim\limits_{n \to +\infty}\frac{1}{n}\sum\limits_{k = 0}^{n-1}\sup\limits_{n \ge N}|X_{-k,n}-X_{-k}| = Z_N,\]
where the limit defining \(Z_N\) exists almost surely and satisfies \(\E{Z_N} = \E{\sup\limits_{n \ge N}|X_{0,n}-X_0|} < +\infty\) by Birkhoff's ergodic theorem.

Since \(\sup\limits_{n \ge N}|X_{0,n}-X_0|\) decreases monotonely to \(0\) we obtain
\[\E{\limsup\limits_{n \to +\infty}|Y_n|} \le \lim\limits_{N \to +\infty}\E{Z_N} = 0,\]
so that \(\limsup\limits_{n \to +\infty}|Y_n| = 0\) almost surely.
\end{proof}

\begin{lemma}[Length of distinguished intervals]\label{lengthlemma}
 For all \(\epsilon > 0\) almost surely one has
 \[B_{r_n}(x_0) \subset  T_{-1}\cdots T_{-n}I_{-n} \subset B_{R_n}(x_0)\]
 for all \(n\) large enough, where \(r_n = \exp(-(\chi+\epsilon)n)\) and \(R_n = \exp(-(\chi-\epsilon)n)\).
\end{lemma}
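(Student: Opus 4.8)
The plan is to apply Maker's theorem to control the logarithmic size of the image interval $T_{-1}\circ\cdots\circ T_{-n}(I_{-n})$. The key observation is that $I_{-n} = S^1\setminus B_{\frac12\dist(x_{-n},y_{-n})}(y_{-n})$ is, up to a definite constant factor, a complement of a ball around $y_{-n}$ of radius proportional to $\dist(x_{-n},y_{-n})$; hence Proposition \ref{simplesizeprop} applies (with $k=0$ and a suitable $\epsilon$ depending on which endpoint we track) to tell us that the image interval has length going to $0$ exponentially. What we need, however, is the \emph{precise} exponential rate $\chi = \chi_i-\chi_{i+1}$, and for this the crude exponential bound from Proposition \ref{simplesizeprop} is not enough — we must compute the Birkhoff/Maker average of the derivative cocycle.

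First I would set up the cocycle. Working in the quotient space $V(-n) = S_{i+1}(-n)/S_{i-1}(-n)$ with the inner products adapted so that $E^u$ and $E^s$ are orthonormal bases (as in the proof of Proposition \ref{simplesizeprop}), the map $L_n$ inducing $T_{-1}\circ\cdots\circ T_{-n}$ contracts the $E^u$-direction by $e^{-\chi_i n + o(n)}$ and the $E^s$-direction by $e^{-\chi_{i+1} n + o(n)}$ by Lemma \ref{oseledetslemma}, and the angle between the images $L_nE^u, L_nE^s$ in the standard inner product is $e^{o(n)}$. The interval $I_{-n}$ has one endpoint at angular distance $\frac12\dist(x_{-n},y_{-n})$ from $y_{-n}$ on each side and contains $x_{-n}$; its image under $T_{-1}\circ\cdots\circ T_{-n}$ contains $x_0$ (since $x_{-n}\in I_{-n}$ and $T_{-1}\cdots T_{-n}x_{-n}=x_0$) and its endpoints are the images of the two endpoints of $I_{-n}$. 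I would estimate the angular size of the image using the elementary fact that the derivative of a projective map on $S^1$ at a point $z$ is, up to bounded factors, governed by how the underlying linear map distorts the direction $z$ relative to the others; concretely the length of the image interval is comparable to $\|L_n v\|^2/|\det(L_n)|$ evaluated near the direction of $x_{-n}$, times an $e^{o(n)}$ correction coming from the angle between $L_nE^u$ and $L_nE^s$ and from the fact that $x_{-n}$ stays a definite angular distance away from $y_{-n}$ (which requires $\dist(x_{-n},y_{-n})$ not to degenerate — a point where I would invoke recurrence of $\dist(x_n,y_n)$, which is positive a.s. by Lemma \ref{oseledetslemma}, so $\frac1n\log\dist(x_{-n},y_{-n})\to 0$ a.s. by Poincaré recurrence). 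Here $v$ spans $E^u$, so $\|L_nv\| = e^{-\chi_i n + o(n)}$, and $|\det L_n| = e^{-(\chi_i+\chi_{i+1})n + o(n)}$ by Lemma \ref{oseledetslemma}, giving image length $e^{-(\chi_i - \chi_{i+1})n + o(n)} = e^{-\chi n + o(n)}$.

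To make the $o(n)$ rigorous and uniform I would phrase the logarithm of the image length as $\frac1n\sum_{k=0}^{n-1} X_{-k,n-k}$ for an appropriate stationary array $X_{k,n}$ — namely $X_{k,n}$ is the $\log$ of the one-step derivative contribution of $T_k$ along the appropriate direction, corrected so that the telescoped sum equals $\log|T_{-1}\cdots T_{-n}I_{-n}|$ — with $X_{k,n}\to X_k$ a.s. as $n\to\infty$ (the limiting direction being $E^u$, which exists by Lemma \ref{oseledetslemma}) and $\E{\sup_n|X_{k,n}|}<\infty$ (using that the log-derivatives of the $T_k$ are controlled by $\log(\sigma_1(A)/\sigma_d(A))$, which is integrable). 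Maker's theorem then gives $\frac1n\log|T_{-1}\cdots T_{-n}I_{-n}| \to \E{X_0} = -\chi$ almost surely, which is exactly the two-sided containment $B_{r_n}(x_0)\subset T_{-1}\cdots T_{-n}I_{-n}\subset B_{R_n}(x_0)$ for all large $n$ once we also note that $x_0$ is interior to the image and stays a definite proportion of the way from its endpoints (again by the recurrence of $\dist(x_n,y_n)$ and the bounded-distortion structure). The main obstacle I expect is precisely the bookkeeping that converts the geometric picture — image interval length comparable to $\|L_nv\|^2/|\det L_n|$ up to $e^{o(n)}$ — into a clean stationary cocycle to which Maker's theorem applies, and in particular controlling the $e^{o(n)}$ factors uniformly in $n$ rather than just along subsequences; the angle estimates from Lemma \ref{oseledetslemma} and the recurrence of $\dist(x_n,y_n)$ are the tools that handle this, but assembling them into a single integrable dominating function for the Maker array is the delicate step.
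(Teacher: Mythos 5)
Your overall strategy — apply Maker's theorem to a stationary telescoping array whose cumulative sum recovers $\log$ of the image-interval length, using Proposition \ref{simplesizeprop} to get almost-sure convergence of the array and the singular-value bounds of Lemma \ref{grassmannjacobian} to dominate the supremum — is the same as the paper's, and your identification of the quantity $\|L_nv\|^2/|\det L_n|$ as the relevant cocycle is correct. Two of the steps, however, need to be repaired. First, you claim $\frac{1}{n}\log\dist(x_{-n},y_{-n})\to 0$ "by Poincaré recurrence". Recurrence alone only gives a bounded subsequence, hence $\liminf \frac{1}{n}|\log\dist(x_{-n},y_{-n})| = 0$; it does not give the full limit (a stationary real sequence need not be $o(n)$, e.g.\ heavy-tailed i.i.d.\ sequences). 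The statement you actually need, $\dist(x_n,y_n) = e^{o(|n|)}$, is the last conclusion of Lemma \ref{oseledetslemma}, and comes from the multiplicative ergodic theorem, not from recurrence.

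Second, and more substantively, your appeal to a "bounded-distortion structure" to argue that $x_0$ remains a definite proportion inside $T_{-1}\cdots T_{-n}(I_{-n})$ is not available: the compositions $T_{-1}\cdots T_{-n}$ have no uniform distortion bound, and even the single-step densities $dT_{k-1}\eta/d\eta$ can be very far from constant. A single Maker computation on the full interval gives only the total length $\eta\left(T_{-1}\cdots T_{-n}(I_{-n})\right) = e^{-\chi n + o(n)}$, hence only the outer inclusion $T_{-1}\cdots T_{-n}(I_{-n}) \subset B_{R_n}(x_0)$. To get the inner inclusion $B_{r_n}(x_0) \subset T_{-1}\cdots T_{-n}(I_{-n})$, you must know that \emph{each} of the two arcs of $I_{-n}\setminus\{x_{-n}\}$ maps to an arc of length at least $r_n$. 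The paper's fix is exactly this: split $I_{-n}$ at $x_{-n}$ into the two one-sided arcs $J_{-n}$ and $I_{-n}\setminus J_{-n}$, and run the Maker argument separately on each, with array
\[
X_{k,n} = \log\left(\frac{\eta\left(T_{k-2}\circ\cdots\circ T_{k-n}(J_{k-n})\right)}{\eta\left(T_{k-1}\circ\cdots\circ T_{k-n}(J_{k-n})\right)}\right), \qquad X_k = \log\left(\frac{dT_{k-1}\eta}{d\eta}(x_k)\right).
\]
Since $J_{k-n}$ has $x_{k-n}$ as one endpoint, every image arc has $x_k$ as an endpoint and shrinks to $x_k$, so $X_{k,n}\to X_k$ cleanly; and $\eta(J_{-n}) = e^{o(n)}$ by Lemma \ref{oseledetslemma}. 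Both halves then have length $e^{-\chi n + o(n)}$, yielding the two-sided containment. Once you replace "bounded distortion" by this one-sided telescoping device and the recurrence argument by the direct Oseledets estimate, your proof matches the paper's.
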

\begin{proof} 
 Recall that \(\eta\) denotes the rotationally invariant probability on the unit circle \(S^1\).

 By Lemma \ref{grassmannjacobian} one has
 \[\chi = \E{\log\left(\frac{dT_{k-1}\eta}{d\eta}(x_k)\right)},\]
 for all \(k\).
 
 For each \(n\) let \(J_n\) be the connected component of \(I_n \setminus \lbrace x_n\rbrace\) which is counter-clockwise from \(x_n\) and define
 \[X_{k,n} = \log\left(\frac{\eta(T_{k-2}\circ \cdots \circ T_{k-n}(J_{k-n}))}{\eta(T_{k-1}\circ \cdots \circ T_{k-n}(J_{k-n}))}\right),\]
 and
 \[X_k = \log\left(\frac{dT_{k-1}\eta}{d\eta}(x_k)\right).\]
    
By lemma \ref{oseledetslemma} one has that \(S^1 \setminus J_{n-k}\) contains a ball of radius \(e^{o(n)}\) centered at \(y_{n-k}\).   In view of proposition \ref{simplesizeprop} this implies that for all \(\epsilon \in (0,1)\) almost surely eventually \(T_{k-1}\circ \cdots \circ T_{k-n}(J_{k-n}) \subset B_{\epsilon \dist(x_k,y_k)}(x_k)\).
This shows that almost surely the length of \(T_{k-1}\circ \cdots \circ T_{k-n}(J_{k-n})\) goes to zero when \(n \to +\infty\) and therefore one one has \(\lim\limits_{n \to +\infty}X_{k,n} = X_k\) almost surely for all \(k\).
 
 Notice that for each \(x \in S^1\) one has, again by lemma \ref{grassmannjacobian}, that
 \[\frac{dT_{k-1}\eta}{d\eta}(x) = \frac{|\det_{S}(A(k-1))|^2}{|\det_{S_{i-1}(k-1)}(A(k-1))||\det_{S_{i+1}(k-1)}(A(k-1))|}\]
 for some \(i\)-dimensional subspace \(S\) between \(S_{i-1}(k-1)\) and \(S_{i+1}(k-1)\).
 
 In particular this implies that
 \(\min_x \log\left(\frac{dT_{k-1}\eta}{d\eta}(x)\right)\)
 and 
 \(\max_x \log\left(\frac{dT_{k-1}\eta}{d\eta}(x)\right)\)
 have finite expectation since they are controlled by the logarithms of singular values of \(A(k-1)\).
 
 This yields that \(\sup\limits_{n \to +\infty}|X_{k,n}|\) has finite expectation for all \(k\).
 
Applying Maker's theorem we obtain
 \begin{align*}
\chi &= \lim\limits_{n \to +\infty}\frac{1}{n}\sum\limits_{k = 0}^{n-1} \log\left(\frac{dT_{-k-1}\eta}{d\eta}(x_{-k})\right)
\\ &=  \lim\limits_{n \to +\infty}\frac{1}{n}\sum\limits_{k = 0}^{n-1} \log\left(\frac{\eta(T_{-k-2}\circ \cdots \circ T_{-n}(J_{-n}))}{\eta(T_{-k-1}\circ \cdots \circ T_{-n}(J_{-n}))}\right)
\\ &=  \lim\limits_{n \to +\infty}\frac{1}{n} \log\left(\frac{\eta(J_{-n})}{\eta(T_{-1}\circ \cdots \circ T_{-n}(J_{-n}))}\right).
\end{align*}
  
 Finally, since \(\eta(J_{-n}) = e^{o(n)}\) when \(n \to +\infty\) by Lemma \ref{oseledetslemma} one obtains:
\[\lim\limits_{n \to +\infty}\frac{1}{n} \log\left(\eta(T_{-1}\circ \cdots \circ T_{-n}(J_{-n})\right) = -\chi.\]

The same argument shows that \(\eta(T_{-1} \circ \cdots T_{-n}(I_{-n} \setminus J_{-n})) = e^{-\chi n}\) which establishes the claims.

 \end{proof}

\subsection{Probability of distinguished intervals}

We will now essentially repeat the argument of the previous subsection replacing the rotationally invariant probability measure (which is equivalent to length up to a factor) with the random probabilities \(\nu_n\).

In this case one wishes to replace (in the ergodic averages) the terms of the form \(\frac{dT_{k-1}\nu_{k-1}}{d\nu_{k}}(x_k)\) with approximating terms calculated using the intervales \(I_n\).  Almost sure convergence of the approximating terms boils down to the theorem on differentiation of measures.  However, the integrability of the supremum of the approximating terms is more subtle.

The issue is that the singular values of \(A(k-1)\) do not directly control the maximum and minimum of \(\frac{dT_{k-1}\nu_{k-1}}{d\nu_{k}}(x)\) on the circle.   In fact, this density may be unbounded with positive probability.    Instead, control of the approximation comes from the \(x\log(x)\)-integrability of the density with respect to \(\nu_k\) which follows from the fact that \(\kappa < +\infty\) (that is Theorem \ref{gaptheorem}).

\subsubsection{Orlicz regularity and a maximal inequality\label{sectionorlicz}}

For each \(k\) let \(f_k(x) = \frac{dT_{k-1}\nu_{k-1}}{d\nu_k}(x)\) and notice that it is \(\sigma(\F_{k-1},\F_k,T_{k-1})\)-measurable.

Notice that \(x_{k-1}\) and \(A(k-1)\) are independent conditioned on \(\F_{k-1}\).   Since the conditional distribution of \(x_{k-1}\) given \(\F_{k-1}\) is \(\nu_{k-1}\) one obtains that the distribution of \(x_{k} = T_{k-1}(x_{k-1})\) 
conditioned on \(\sigma(\F_{k-1},A(k-1))\) has density \(f_k\) with respect to \(\nu_k\).   Since this conditional distribution is \(\sigma(\F_{k-1},\F_k,T_{k-1})\)-measurable and \(\sigma(\F_{k-1},\F_k,T_{k-1}) \subset \sigma(\F_{k-1},A(k-1))\) one obtains
that the conditional distribution of \(x_k\) given \(\sigma(\F_{k-1},\F_k,T_{k-1})\) has density \(f_k\) with respect to \(\nu_k\).  Therefore, 
\begin{align*}
\kappa &= \E{\log\left(f_k(x_k)\right)} = \E{\E{\log\left(f_k(x_k)\right)|\F_{k-1},\F_k,T_{k-1}}}
\\ &= \E{\int f_k(x)\log(f_k(x))d\nu_k(x)}.
\end{align*}

In particular \(f_k\log(f_k)\) is almost surely integrable with respect to \(\nu_k\).  In other words, \(f_k\) almost surely belongs to an Orlicz space which is slightly smaller than \(L^1(\nu_k)\) and the expected value of the corresponding Orlicz norm is finite.  This fact, which follows from the finiteness of \(\kappa\) given by Theorem \ref{gaptheorem}, will allow us to control the maximal function of \(f_k\).

We define the maximal function of a function \(f:S^1 \to \R\) with respect to a probability \(\lambda\) as 
\[M_\lambda f(x) = \sup\limits_{x \in I} \frac{1}{\nu(I)}\int\limits_{I}|f(y)|d\lambda(y)\]
where the supremum is over all intervals containing \(x\).

We will need the following maximal inequality the proof of which is adapted from the proof of \cite[Theorem 1]{stein1970}.
\begin{lemma}[Maximal inequality]\label{maximalinequality}
There exists a constant \(C > 0\) such that for any probability \(\lambda\) on \(S^1\) and any \(\lambda\)-integrable function \(f\) one has
\[t \lambda\left(\lbrace x: M_\lambda f(x) > t\rbrace\right) \le C\int |f|\one{\lbrace |f| > t/2\rbrace}d\lambda\]
for all \(t > 0\).
\end{lemma}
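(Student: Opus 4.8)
The plan is to run the standard weak-type $(1,1)$ argument for the Hardy–Littlewood maximal function, but on the circle with an arbitrary probability measure $\lambda$ in place of Lebesgue measure, and then to sharpen the usual bound $C\|f\|_{L^1(\lambda)}/t$ into the truncated bound $C\int |f|\one{\lbrace |f|>t/2\rbrace}d\lambda$ by peeling off the part of $f$ that is bounded by $t/2$. First I would fix $t>0$ and split $f = g + h$, where $g = f\one{\lbrace |f| \le t/2\rbrace}$ is the ``small'' part and $h = f\one{\lbrace |f| > t/2\rbrace}$ is the ``large'' part. Since $|g| \le t/2$ pointwise, the sublinearity $M_\lambda f \le M_\lambda g + M_\lambda h \le t/2 + M_\lambda h$ shows that $\lbrace M_\lambda f > t\rbrace \subset \lbrace M_\lambda h > t/2\rbrace$. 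Thus it suffices to prove the (ordinary, non-truncated) weak-type estimate $t\,\lambda(\lbrace M_\lambda h > t/2\rbrace) \le C'\int |h|\, d\lambda = C'\int |f|\one{\lbrace |f|>t/2\rbrace}\, d\lambda$, which after relabeling $t/2$ as $t$ is just the weak-$(1,1)$ bound for $M_\lambda$ with a dimension-free constant.

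The core step is therefore the Vitali-type covering argument on $S^1$. Given the open set $E_s = \lbrace x : M_\lambda h(x) > s\rbrace$, for each $x \in E_s$ choose an interval $I_x \ni x$ with $\frac{1}{\lambda(I_x)}\int_{I_x}|h|\,d\lambda > s$, i.e. $\lambda(I_x) < \frac1s\int_{I_x}|h|\,d\lambda$. From the collection $\lbrace I_x\rbrace$ I would extract, by the standard Vitali covering lemma on the circle (a metric space of finite diameter, so one may take a maximal disjoint subfamily among intervals whose lengths are bounded — truncating at length, say, the circumference — and the usual $3\times$-dilation covers everything), a countable disjoint subfamily $\lbrace I_j\rbrace$ such that the dilated intervals $3 I_j$ cover $E_s$. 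Here the one mild subtlety specific to an arbitrary $\lambda$ (rather than Lebesgue measure) is that $\lambda(3I_j)$ need not be comparable to $\lambda(I_j)$; this is handled by noting that $\lambda$ is a probability, so $\sum_j \lambda(3I_j)$ must still be controlled — the cleanest route is to dilate on the ``$h$-mass'' side rather than the ``measure'' side, using $\lambda(E_s) \le \sum_j \lambda(3I_j)$ and then bounding each $\lambda(3I_j)$ via a fixed bounded-overlap property of the $3I_j$ together with $\lambda(3I_j) \le \frac1s\int_{3I_j'}|h|\,d\lambda$ for a suitable companion interval; alternatively, and more simply, one observes that on $S^1$ one can just use the covering by the $I_j$ themselves after a Besicovitch-type selection that already covers $E_s$ with bounded overlap, so that $\lambda(E_s) \le \sum_j \lambda(I_j) \le \frac1s\sum_j \int_{I_j}|h|\,d\lambda \le \frac{N}{s}\int |h|\,d\lambda$, with $N$ the overlap constant. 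Either way one lands on $s\,\lambda(E_s) \le C'\int |h|\,d\lambda$ with an absolute constant $C'$.

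Combining the two pieces: $\lambda(\lbrace M_\lambda f > t\rbrace) \le \lambda(\lbrace M_\lambda h > t/2\rbrace) \le \frac{2C'}{t}\int |h|\,d\lambda = \frac{2C'}{t}\int |f|\one{\lbrace |f|>t/2\rbrace}\,d\lambda$, so the lemma holds with $C = 2C'$. I expect the only real obstacle to be getting a dimension-free (here: $\lambda$-independent) covering constant on the circle; this is exactly the point where following Stein's argument pays off, since on $S^1$ one can use a Besicovitch covering lemma whose overlap constant is an absolute geometric quantity, independent of $\lambda$, rather than the Lebesgue-specific ``$\lambda(3I)\le 3\lambda(I)$'' inequality that fails for general $\lambda$. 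Everything else — the truncation split, sublinearity of $M_\lambda$, and the final bookkeeping — is routine.
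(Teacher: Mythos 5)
Your proposal matches the paper's proof in essence: both reduce to the ordinary weak-$(1,1)$ bound for $M_\lambda$ proved via the Besicovitch covering lemma (correctly identifying that a Vitali $3\times$-dilation argument fails for general $\lambda$ since $\lambda(3I)\le 3\lambda(I)$ need not hold), and both upgrade to the truncated estimate via the same sublinearity observation $M_\lambda f \le t/2 + M_\lambda\bigl(f\one{\lbrace |f|>t/2\rbrace}\bigr)$. The paper organizes this slightly differently — first proving $t\lambda(\lbrace M_\lambda f>t\rbrace)\le 2c\int|f|\,d\lambda$ by inner-approximating $\lbrace M_\lambda f>t\rbrace$ by a compact set, then applying that bound to the truncation — but the two key ideas and the resulting constant are the same.
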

\begin{proof}
 Given \(\lambda\),\(f\), and \(t\) consider a compact set \(K \subset \lbrace M_\lambda f >  t\rbrace\) such that
 \[\lambda(\lbrace M_\lambda f > t\rbrace) \le 2\lambda(K).\]
 
 By definition, each point in \(K\) belongs to an interval \(I\) such that
 \[t\lambda(I) < \int |f|\one{I}d\lambda.\]
 
 Since \(K\) is compact one may cover it with finitely many such intervals.
 
 Applying the Besicovitch covering lemma (e.g. see \cite[Theorem 1.1]{deguzman1975}) there exists a constant \(c\) (which does not depend on \(\lambda\) nor \(f\)) such that a subcover may be found so that no more than \(c\) intervals intersect simultaneously.
 
 Summing over such a subcover one has
 \[t\lambda(\lbrace M_\lambda f > t\rbrace) \le 2t\lambda(K) \le 2c\int |f| d\lambda.\]
 
 This inequality has been established for all \(\lambda\)-integrable \(f\) and all \(t > 0\).  Applying it to \(g = f\one{\lbrace |f| > t/2\rbrace}\) one obtains (observing that \(M_\lambda f \le t/2 + M_\lambda g\)) that
 \[t\lambda (\lbrace M_\lambda f > t\rbrace) \le t\lambda(\lbrace M_\lambda g > t/2\rbrace) \le 4c \int |f|\one{\lbrace |f| > t/2\rbrace}d\lambda\]
 which establishes the claim.
\end{proof}

We now use Lemma \ref{maximalinequality} to control the typical maximal function of \(f_k\).  The argument is adapted from \cite[Proposition IV-2-10]{neveu1975}, see the appendix of said work for discussion of this type of results in the context of general Orlicz spaces.
\begin{lemma}[Average maximal function]\label{averagemaximalfunction}
In the context above one has
\[\E{\log\left(M_{\nu_k}f_k(x_k)\right)} < +\infty.\]
\end{lemma}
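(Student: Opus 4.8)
The plan is to combine the maximal inequality of Lemma~\ref{maximalinequality} with the $x\log x$-integrability of $f_k$ established just above via the distributional (layer-cake) formula for $\E{\log^+(M_{\nu_k}f_k(x_k))}$. First I would recall from the paragraph preceding Section~\ref{sectionorlicz} that, conditioned on $\sigma(\F_{k-1},\F_k,T_{k-1})$, the random point $x_k$ has density $f_k$ with respect to $\nu_k$, so that for any nonnegative measurable $\Phi$ one has $\E{\Phi(M_{\nu_k}f_k(x_k)) \mid \F_{k-1},\F_k,T_{k-1}} = \int \Phi(M_{\nu_k}f_k(x)) f_k(x)\,d\nu_k(x)$. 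Since $\log(M_{\nu_k}f_k(x_k)) \le \log^+(M_{\nu_k}f_k(x_k))$ pointwise (the ambient probability is on $S^1$, and one may as well assume $M_{\nu_k}f_k \ge 1$ after noting $M_{\nu_k}f_k \ge \int f_k\,d\nu_k = 1$), it suffices to bound $\E{\int \log^+(M_{\nu_k}f_k(x)) f_k(x)\,d\nu_k(x)}$.

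Next I would write, using the layer-cake representation and Fubini,
\begin{align*}
\int \log^+(M_{\nu_k}f_k(x)) f_k(x)\,d\nu_k(x) &= \int_1^{+\infty} \frac{1}{t}\Big(\int_{\{M_{\nu_k}f_k > t\}} f_k\,d\nu_k\Big)\,dt.
\end{align*}
The inner integral is at most $t\,\nu_k(\{M_{\nu_k}f_k > t\}) + \int f_k\one{\{f_k > t\}}\,d\nu_k$ (splitting $f_k$ at level $t$ on the set $\{M_{\nu_k}f_k>t\}$), and the first summand is controlled by Lemma~\ref{maximalinequality}, giving $t\,\nu_k(\{M_{\nu_k}f_k>t\}) \le C\int f_k\one{\{f_k>t/2\}}\,d\nu_k$. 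Substituting and integrating in $t$, a change of variables $t \mapsto 2t$ and Tonelli yield a bound of the form $C'\int f_k \log^+(f_k)\,d\nu_k + C''$ for absolute constants $C',C''$. Taking expectations and invoking $\E{\int f_k\log(f_k)\,d\nu_k} = \kappa < +\infty$ (Theorem~\ref{gaptheorem}, as recorded in Section~\ref{sectionorlicz}) closes the estimate; I would also note $\int f_k\log^+(f_k)\,d\nu_k \le \int f_k\log(f_k)\,d\nu_k + e^{-1}$ since $x\log^- x \ge -e^{-1}$, so the $\log^+$ and $\log$ versions of the integral differ by a constant.

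The main obstacle is the bookkeeping around the $t$-integral near $t=1$ and the need to keep every quantity conditionally integrable so that the expectation passes through cleanly: I would handle this by working throughout with $\log^+$ rather than $\log$, verifying $M_{\nu_k}f_k \ge 1$ a.s.\ so that $\log^+(M_{\nu_k}f_k(x_k)) = \log(M_{\nu_k}f_k(x_k))$ up to the trivial difference, and applying Tonelli only to manifestly nonnegative integrands. A secondary point is that Lemma~\ref{maximalinequality} is stated for a fixed probability $\lambda$ and integrable $f$; here $\nu_k$ and $f_k$ are random, but the constant $C$ is absolute, so applying the inequality pathwise (on the full-measure event where $f_k \in L^1(\nu_k)$, which holds since $\int f_k\,d\nu_k = 1$) and then taking expectations is legitimate.
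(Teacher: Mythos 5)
Your argument is correct and reaches the same conclusion, but via a different decomposition from the paper's. Both proofs share the same skeleton: reduce to bounding $\E{\int f_k \log(M_{\nu_k}f_k)\,d\nu_k}$ using the conditional density of $x_k$, and then leverage the weak-type maximal inequality of Lemma~\ref{maximalinequality} together with the finiteness of $\kappa = \E{\int f_k\log f_k\,d\nu_k}$. Where you diverge is in how the integrand $f_k\log(M_{\nu_k}f_k)$ is tamed. The paper applies the pointwise Young-type inequality $a\log b \le a\log a + b/e$ with $a = f_k$, $b = M_{\nu_k}f_k$, decoupling the weight from the maximal function and reducing the problem to bounding the unweighted quantity $\E{\int M_{\nu_k}f_k\,d\nu_k}$ by a single layer-cake application of Lemma~\ref{maximalinequality} (this is the Neveu-style argument from \cite{neveu1975}). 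You instead keep the weight $f_k$, layer-cake $\int f_k\log^+(M_{\nu_k}f_k)\,d\nu_k$ directly, and then perform the classical truncation of $f_k$ at level $t$ inside the superlevel set $\{M_{\nu_k}f_k > t\}$; this is the Calder\'{o}n--Zygmund $L\log L$ maximal argument. Your route is a little more hands-on (an extra splitting and a change of variables), while the paper's is slightly slicker; both are standard and correct. Your observations that $M_{\nu_k}f_k \ge \int f_k\,d\nu_k = 1$ almost surely (so $\log = \log^+$ on the relevant random variable), that $\int f_k\log^+ f_k\,d\nu_k \le \int f_k\log f_k\,d\nu_k + e^{-1}$ since $x\log^- x \le e^{-1}$, and that Lemma~\ref{maximalinequality} applies pathwise with an absolute constant are all exactly the right housekeeping points. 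No gaps.
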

\begin{proof}
As observed at the beginning of section \ref{sectionorlicz} the conditional distribution of \(x_k\) given \(\sigma(\F_{k-1},T_{k-1},\F_k)\) has density \(f_k\) with respect to \(\nu_k\).  Therefore,
\begin{align*}
\E{\log\left(M_{\nu_k}f_k(x_k)\right)} &= \E{\E{\log\left(M_{\nu_k}f_k(x_k)\right)\vert \F_{k-1},T_{k-1},\F_k}}
\\ &= \E{\int f_k(x)\log\left(M_{\nu_k}f_k(x)\right)d\nu_k(x)}.
\end{align*}

 The lower bound \(f_k\log(f_k) \le f_k\log(M_{\nu_k}f_k)\) which holds \(\nu_k\)-almost everywhere reduces the problem to showing that the expected value on the right is not \(+\infty\).
 
 Applying the inequality \(a\log(b) \le a\log(a) + b/e\) (valid for \(a,b \ge 0\)) one obtains
 \[\E{\int f_k(x)\log\left(M_{\nu_k}f_k(x)\right)d\nu_k(x)} \le \kappa + \frac{1}{e}\E{\int M_{\nu_k}f_k(x)d\nu_k(x)}.\] 
 
 We now conclude by using Lemma \ref{maximalinequality} as follows
 \begin{align*}
  \E{\int M_{\nu_k}f_k(x)d\nu_k(x)} &\le 1 + \E{\int_1^{+\infty}\nu_k\left(\lbrace M_{\nu_k}f_k \ge t\rbrace\right)dt}
  \\ &\le 1 + C\E{\int_1^{+\infty}\int f_k(x)\frac{1}{t}\one{\lbrace f_k \ge t/2\rbrace}d\nu_k(x) dt}
  \\ &\le 1 + C\E{\int f_k(x)\log(2f_k(x))d\nu_k(x)}
  \\ &= 1 + C\log(2) + C\kappa.
 \end{align*}
\end{proof}

\subsubsection{Domination of approximating terms}

We will now establish the main estimate needed to apply Maker's theorem as in Lemma \ref{lengthlemma}.   For the needed upper bound Lemma \ref{averagemaximalfunction} suffices.  For the lower bound we mimic the argument of \cite{chung1961}.

\begin{lemma}\label{dominationlemma}
For each \(n = 1,2,\ldots\) let \(J_{n} = T_{-1}\circ \cdots \circ T_{-n}(I_{-n})\) and
\[X_{n} = \log\left(\frac{T_{-1}\nu_{-1}(J_{n})}{\nu_0(J_n)}\right).\]

Then \(\E{\sup\limits_{n} |X_n|} < +\infty\).
\end{lemma}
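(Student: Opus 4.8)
The plan is to decompose $X_n$ into a ``cocycle part'' and an ``approximation error'', following the template of Lemma \ref{lengthlemma}, and to bound the two halves separately. Write $J_n = T_{-1}\circ\cdots\circ T_{-n}(I_{-n})$, so that $J_n$ is a random interval containing $x_0$. Telescoping along the orbit, one has
\[
X_n = \log\!\left(\frac{T_{-1}\nu_{-1}(J_n)}{\nu_0(J_n)}\right)
= \log\!\left(\frac{\nu_{-1}(T_{-1}^{-1}J_n)}{\nu_0(J_n)}\right),
\]
and the ratio inside is exactly the ``one-step'' version of the averaged density appearing in the probability estimate we are building toward. For the \emph{upper} bound on $\sup_n X_n$, the point is that $J_n$ always contains the point $x_0$, so
\[
\frac{T_{-1}\nu_{-1}(J_n)}{\nu_0(J_n)} \le \frac{1}{\nu_0(J_n)}\int_{J_n} f_0\, d\nu_0 \le M_{\nu_0} f_0(x_0),
\]
where $f_0 = \tfrac{dT_{-1}\nu_{-1}}{d\nu_0}$ and $M_{\nu_0}$ is the maximal function from Lemma \ref{maximalinequality}. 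Hence $\sup_n X_n^{+} \le \log^{+}\!\big(M_{\nu_0}f_0(x_0)\big)$, which has finite expectation by Lemma \ref{averagemaximalfunction}. So the positive part is immediately dominated.

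The real work is the \emph{lower} bound, i.e.\ dominating $\sup_n X_n^{-} = \sup_n \log^{+}\!\big(\nu_0(J_n)/T_{-1}\nu_{-1}(J_n)\big)$; this is where I expect the main obstacle to lie, and it is why the paper says it ``mimics the argument of \cite{chung1961}''. The difficulty is that $\nu_0(J_n)$ in the denominator can be genuinely small as the intervals $J_n$ shrink toward $x_0$, and there is no uniform lower bound from singular values. The Chung-type argument handles this by a stopping-time / layer-cake decomposition: one estimates, for each level $t>0$,
\[
\Pof{\sup_n \tfrac{\nu_0(J_n)}{T_{-1}\nu_{-1}(J_n)} > e^{t}}
\]
by isolating the \emph{first} $n$ at which the ratio exceeds the threshold, and on that event one gets a reverse-maximal control: at the stopping index the interval $J_n$ has $T_{-1}\nu_{-1}$-mass comparable to a small multiple of its $\nu_0$-mass, and summing the disjointified (Besicovitch-overlapping) intervals produces an $L^1$-type bound. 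The key input making the tail summable is again the $x\log x$-integrability of $f_0$ with respect to $\nu_0$, equivalently $\kappa < +\infty$ from Theorem \ref{gaptheorem}, together with the maximal inequality of Lemma \ref{maximalinequality}. One should get a bound of the form $\Pof{\sup_n X_n^{-} > t} \le C e^{-t}\,\E{\text{(Orlicz norm of }f_0)} $ or, more crudely, an estimate showing $\E{\sup_n X_n^{-}} \le C(1 + \kappa)$.

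Putting the two halves together gives $\E{\sup_n |X_n|} = \E{\sup_n X_n^{+}} + \E{\sup_n X_n^{-}} < +\infty$, which is the claim. In carrying this out, two technical points need care: first, the maximal-function bound must be applied with the \emph{random} probability $\nu_0$ (not arc length), which is exactly why Lemma \ref{maximalinequality} was stated with a general $\lambda$; second, one must verify the measurability and stationarity needed so that the eventual application of Maker's theorem (in the next lemma) is legitimate, but that is downstream of the present statement. The hard part, to repeat, is the lower bound: controlling how small $\nu_0(J_n)$ can be relative to its pushforward mass, uniformly over the whole sequence of nested intervals, using only the finiteness of the Furstenberg-type entropy $\kappa$.
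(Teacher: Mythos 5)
Your proposal takes essentially the same route as the paper: upper bound via $X_n \le \log M_{\nu_0}f_0(x_0)$ together with Lemma \ref{averagemaximalfunction}, and lower bound via a reverse Besicovitch covering argument in the spirit of Chung. The one place where you misattribute the difficulty is the lower bound. You write that the key input there is the $x\log x$-integrability of $f_0$ (equivalently $\kappa<\infty$) and that the tail bound should carry a factor like an Orlicz norm of $f_0$. In fact the Chung-type argument gives a cleaner, universal estimate: defining the bad set $B_t$ of points lying in some interval $I$ with $\int_I f_0\,d\nu_0 \le e^{-t}\nu_0(I)$, and covering a compact subset by a bounded-overlap Besicovitch family of such intervals, one gets directly
\[
\int_{B_t} f_0\, d\nu_0 \le 2c\,e^{-t},
\]
where $c$ is the Besicovitch constant and does not depend on $f_0$, $\nu_0$, or $\kappa$. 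Since the conditional law of $x_0$ given $(f_0,\nu_0)$ is $f_0\nu_0$, this immediately yields $\Pof{\inf_n X_n \le -t} \le 2c\,e^{-t}$, so the negative part has exponential tails with no Orlicz input whatsoever. The finiteness of $\kappa$ and the maximal-function machinery of Lemmas \ref{maximalinequality}--\ref{averagemaximalfunction} are needed only for the upper bound, where the possible unboundedness of $f_0$ is the obstruction; the lower bound is, somewhat counterintuitively, the easier half once the bad-set/Besicovitch idea is in hand. This is a small misdiagnosis rather than a gap: since $\kappa<\infty$ is available anyway, your route would not fail, but it would leave you hunting for a dependence on $f_0$ in the lower-bound tail that simply is not there.
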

\begin{proof}
Notice first that
\[X_n = \log\left(\frac{1}{\nu_0(J_n)}\int\limits_{J_n} f_0(x)d\nu_0(x)\right) \le \log\left(M_{\nu_0}f_0(x_0)\right).\]

In view of Lemma \ref{averagemaximalfunction} this bounds \(\sup\limits_{n}X_n\) from above by an integrable random varaible.

For the lower bound consider the event that \(X_n \le -t\) and notice that this implies
\[\int\limits_{J_n}f_0(x)d\nu_0(x) \le e^{-t}\nu_0(J_n).\]

Given \(f_0\) and \(\nu_0\) define the bad set \(B_t\) as the set of points in the circle belonging to an interval \(I\) such that
\begin{equation}\label{badsetequation}
\int\limits_{I} f_0 d\nu_0 \le e^{-t}\nu_0(I). 
\end{equation}

Following the proof of Lemma \ref{maximalinequality} consider a compact set \(K \subset B_t\) with
\[\int\limits_{B_t} f_0d\nu_0 \le 2\int\limits_{K}f_0d\nu_0.\]

By considering a finite covering of \(K\) by intervals satisfying equation \ref{badsetequation} and summing over a Besicovitch subcover where no more than \(c\) intervals overlap (here the constant \(c\) does not depend on \(f_0\) nor \(\nu_0\)) we obtain:
\[\int\limits_{B_t}f_0\nu_0 \le 2\int\limits_{K}f_0d\nu_0 \le 2ce^{-t}.\]

Using that the conditional distribution of \(x_0\) given \(f_0\) and \(\nu_0\) is \(f_0\nu_0\) we obtain
\begin{align*}
\Pof{\inf\limits_n X_n \le -t} &\le \Pof{x_0 \in B_t} = \E{\Pof{x_0 \in B_t|f_0,\nu_0}} 
\\ &= \E{\int\limits_{B_t}f_0\nu_0} \le 2ce^{-t} 
\end{align*}
which shows that \(\inf\limits_{n}X_n\) is integrable as claimed.
\end{proof}

\subsubsection{Probability estimates}

Having solved the main technical issues we now repeat the argument of Lemma \ref{lengthlemma} replacing the uniform measure \(\eta\) with the random measure \(\nu_0\) to obtains the desired estimate on the \(\nu_0\)-measure of a sequence of intervals shrinking to \(x_0\).

\begin{lemma}[Probability of distinguished intervals]\label{probabilitylemma}
 Almost surely one has
 \[\lim\limits_{n \to +\infty}\frac{1}{n}\log\left(\frac{\nu_{-n}(I_{-n})}{\nu_0(T_{-1}\circ\cdots \circ T_{-n}(I_{-n}))}\right) = \kappa.\]
\end{lemma}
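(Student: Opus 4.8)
The plan is to run the argument of Lemma \ref{lengthlemma} essentially verbatim, with the rotationally invariant measure $\eta$ replaced by the random measures $\nu_k$ and with the Besicovitch differentiation theorem taking the place of the elementary derivative computation; the integrability input that makes Maker's theorem applicable is precisely Lemma \ref{dominationlemma}. Recall from Section \ref{sectionorlicz} that $f_k = \frac{dT_{k-1}\nu_{k-1}}{d\nu_k}$, that $\kappa = \E{\log f_k(x_k)}$, and that, conditionally on $\sigma(\F_{k-1},T_{k-1},\F_k)$, the point $x_k$ has distribution $f_k\nu_k$. For $k\in\Z$ and $n\ge 1$ I would set
\[X_{k,n} = \log\left(\frac{\nu_{k-1}\big(T_{k-2}\circ\cdots\circ T_{k-n}(I_{k-n})\big)}{\nu_{k}\big(T_{k-1}\circ\cdots\circ T_{k-n}(I_{k-n})\big)}\right),\qquad X_k = \log\big(f_k(x_k)\big),\]
a family which, by stationarity of $((F(n),A(n)))_{n\in\Z}$, is stationary in the sense of Maker's theorem.

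The first step is to show that $X_{k,n}\to X_k$ almost surely as $n\to+\infty$, for each fixed $k$. Write $B_n = T_{k-2}\circ\cdots\circ T_{k-n}(I_{k-n})$. Since $I_{k-n}$ contains $x_{k-n}$ and omits the ball of radius $\tfrac12\dist(x_{k-n},y_{k-n})$ around $y_{k-n}$, arguing as in the proof of Lemma \ref{lengthlemma} (via Proposition \ref{simplesizeprop} and Lemma \ref{oseledetslemma}) gives that, almost surely, $B_n$ contains $x_{k-1}$ and the lengths of $B_n$ and of $T_{k-1}(B_n)$ — the latter interval containing $x_k=T_{k-1}(x_{k-1})$ — tend to $0$. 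Now $T_{k-1}\nu_{k-1}=f_k\nu_k$ and $T_{k-1}$ is a bijection of $S^1$, so $\nu_{k-1}(B_n)=T_{k-1}\nu_{k-1}\big(T_{k-1}(B_n)\big)=\int_{T_{k-1}(B_n)}f_k\,d\nu_k$, whence
\[X_{k,n} = \log\left(\frac{1}{\nu_k\big(T_{k-1}(B_n)\big)}\int_{T_{k-1}(B_n)}f_k\,d\nu_k\right).\]
By the Besicovitch differentiation theorem applied to $\nu_k$ this average tends to $f_k(x)$ whenever intervals containing $x$ shrink to $x$, for $\nu_k$-almost every $x$; the exceptional set is $\sigma(\F_{k-1},T_{k-1},\F_k)$-measurable and $\nu_k$-null, hence $f_k\nu_k$-null, so almost surely $x_k$ avoids it and $X_{k,n}\to\log f_k(x_k)=X_k$.

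The second step is the domination $\E{\sup_n|X_{k,n}|}<+\infty$, which for $k=0$ is exactly Lemma \ref{dominationlemma} and for every $k$ follows by stationarity; in particular $|X_k|\le\sup_n|X_{k,n}|$ is integrable. Maker's theorem then yields
\[\lim_{n\to+\infty}\frac1n\sum_{k=0}^{n-1}X_{-k,\,n-k}=\lim_{n\to+\infty}\frac1n\sum_{k=0}^{n-1}X_{-k}\qquad\text{almost surely},\]
and by Birkhoff's ergodic theorem together with ergodicity of $((F(n),A(n)))_{n\in\Z}$ the right-hand side equals $\E{X_0}=\E{\log f_0(x_0)}=\kappa$. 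For the left-hand side, put $C_j = T_{j-1}\circ\cdots\circ T_{-n}(I_{-n})$ (an empty composition being the identity, so $C_{-n}=I_{-n}$ and $C_0=T_{-1}\circ\cdots\circ T_{-n}(I_{-n})$); then $X_{-k,\,n-k}=\log\nu_{-k-1}(C_{-k-1})-\log\nu_{-k}(C_{-k})$, so the sum telescopes to $\log\nu_{-n}(I_{-n})-\log\nu_0\big(T_{-1}\circ\cdots\circ T_{-n}(I_{-n})\big)$. Dividing by $n$ and comparing the two sides gives the lemma.

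The step I expect to demand the most care is the almost-sure convergence $X_{k,n}\to X_k$, since it requires handling two things at once: the geometry, namely that the intervals $T_{k-1}\circ\cdots\circ T_{k-n}(I_{k-n})$ genuinely collapse onto $x_k$ (this is where Lemma \ref{oseledetslemma} and Proposition \ref{simplesizeprop} enter), and the measure theory, namely that $x_k$ is almost surely a differentiation point of $f_k$ with respect to the possibly highly irregular measure $\nu_k$ (this is where the Besicovitch theorem and the conditional law $f_k\nu_k$ of $x_k$ are used). Everything else is the bookkeeping of the telescoping sum and routine appeals to Maker's and Birkhoff's theorems, exactly as in Lemma \ref{lengthlemma}.
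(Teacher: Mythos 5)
Your proof is correct and follows the paper's argument exactly: you define the same approximating family $X_{k,n}$ (the paper writes it as $\log\big(T_{k-1}\nu_{k-1}(J_{k,n})/\nu_k(J_{k,n})\big)$ with $J_{k,n}=T_{k-1}\circ\cdots\circ T_{k-n}(I_{k-n})$, which is identical to yours after unwinding the pushforward), verify almost sure convergence $X_{k,n}\to X_k$, invoke Lemma~\ref{dominationlemma} for the domination hypothesis, apply Maker's theorem, and telescope. The paper states the convergence $X_{k,n}\to X_k$ and the telescoping identity without elaboration; your write-up fills both in, including the correct observation that Besicovitch differentiation with respect to $\nu_k$ plus the conditional law $f_k\nu_k$ of $x_k$ is what makes the pointwise limit work.
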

\begin{proof}
 For each \(k \in \Z\) and \(n = 1,2,\ldots\) let \(J_{k,n} = T_{k-1}\circ\cdots \circ T_{k-n}(I_{k-n})\),
\[X_{k,n} = \log\left(\frac{T_{k-1}\nu_{k-1}(J_{k,n})}{\nu_k(J_{k,n})}\right),\]
and
\[X_{k} = \log\left(\frac{dT_{k-1}\nu_{k-1}}{d\nu_k}(x_k)\right).\]

Notice that for each \(n\) the sequence \(X_{k,n}\) is stationary and almost surely \(\lim\limits_{n \to +\infty}X_{k,n} = X_k\).

Furthermore \(\sup\limits_{n} |X_{k,n}|\) is integrable by Lemma \ref{dominationlemma}.

Applying Maker's theorem as in lemma \ref{lengthlemma}, almost surely one has
\begin{align*}
\kappa &= \lim\limits_{n \to +\infty}\frac{1}{n}\sum\limits_{k = 0}^{n-1}\log\left(\frac{dT_{-k-1}\nu_{-k-1}}{d\nu_{-k}}(x_{-k})\right) 
\\ &= \lim\limits_{n \to +\infty}\frac{1}{n}\sum\limits_{k = 0}^{n-1}X_{-k}
\\ &= \lim\limits_{n \to +\infty}\frac{1}{n}\sum\limits_{k = 0}^{n-1}X_{-k,n-k}
\\ &= \lim\limits_{n \to +\infty}\frac{1}{n}\log\left(\frac{\nu_{-n}(I_{-n})}{\nu_0(T_{-1}\circ\cdots \circ T_{-n}(I_{-n}))}\right),
\end{align*}
as claimed.
\end{proof}

A technical issue in what follows is that the asymptotic lower bound for \(\nu_0(T_{-1}\circ \cdots \circ T_{-n}(I_{-n}))\) just obtained, is bad when \(\nu_{-n}(I_{-n})\) is small.  However, in view of Lemma \ref{stationaryintervalslemma}, \(\nu_{-n}(I_{-n}) \ge 1/2\) `half of the time', and this suffices for our needs.

\subsection{Proof of Theorem \ref{dimensiontheorem}}

Let \(n_1 < n_2 < \cdots\) be the (random) sequence of values of  \(n\) for which \(\nu_{-n}(I_{-n}) \ge 1/2\).   By Lemma \ref{stationaryintervalslemma} this occurs with probability at least \(1/2\) for each fixed \(n\).  Hence, by the ergodic theorem, taking a subsequence we may assume that \(n_k = 2k + o(k)\) almost surely.

For each \(k\) let \(J_k = (T_{-1}\circ \cdots \circ T_{-n_k})(I_{-n_k})\).

Fix \(\epsilon > 0\) and let \(r_n = \exp(-(\chi+\epsilon)n)\) and \(R_n = \exp(-(\chi-\epsilon)n)\).

Choose two integer valued functions \(\ell(r) \le k(r)\) such that 
\[\ell(r) = \frac{-(1-\epsilon)\log(r)}{2(\chi+\epsilon)} + o(\log(r))\]
and
\[k(r) = \frac{-(1+\epsilon)\log(r)}{2(\chi-\epsilon)} + o(\log(r))\]
as \(r \to 0\).

Notice that eventually one has \(R_{n_{k(r)}} \le r \le r_{n_{\ell(r)}}\) and therefore by Lemma \ref{lengthlemma} almost surely
\[J_{k(r)} \subset B_{R_{n_{k(r)}}}(x_0) \subset B_r \subset B_{r_{n_{\ell(r)}}} \subset J_{\ell(r)},\]
for all \(r\) small enough.

Combining these facts one obtains the bounds 
\[\frac{-\log(\nu_0(J_{\ell(r)}))}{-\log(R_{n_{k(r)}})} \le \frac{-\log(\nu_0(B_r(x_0)))}{-\log(r)} \le \frac{-\log(\nu_0(J_{k(r)}))}{-\log(r_{n_{\ell(r)}})}\]

By Lemma \ref{probabilitylemma} almost surely
\[-\log(\nu_0(J_{k})) = \kappa n_k + o(k),\]
when \(k \to +\infty\).

This implies that almost surely 
\[\frac{(1-\epsilon)\kappa}{\chi + \epsilon} \le  \lowerdim_{x_0}(\nu) \le \upperdim_{x_0}(\nu) \le \frac{(1+\epsilon)\kappa}{\chi -\epsilon}.\]

By intersecting over the corresponding full measure sets for a countable sequence \(\epsilon_n \to 0\) one obtains that almost surely \(\nu_0\) is exact dimensional with dimension \(\kappa/\chi\) as claimed.


\begin{thebibliography}{GdM89}

\bibitem[BD83]{blackwell-dubins1983}
David Blackwell and Lester~E. Dubins.
\newblock An extension of {S}korohod's almost sure representation theorem.
\newblock {\em Proc. Amer. Math. Soc.}, 89(4):691--692, 1983.

\bibitem[Bil99]{billingsley}
Patrick Billingsley.
\newblock {\em Convergence of probability measures}.
\newblock Wiley Series in Probability and Statistics: Probability and
  Statistics. John Wiley \& Sons Inc., New York, second edition, 1999.
\newblock A Wiley-Interscience Publication.

\bibitem[Bre57]{breiman1957}
Leo Breiman.
\newblock The individual ergodic theorem of information theory.
\newblock {\em Ann. Math. Statist.}, 28:809--811, 1957.

\bibitem[Chu61]{chung1961}
K.~L. Chung.
\newblock A note on the ergodic theorem of information theory.
\newblock {\em Ann. Math. Statist.}, 32:612--614, 1961.

\bibitem[CLP19]{distancestationary}
Matias {Carrasco}, Pablo {Lessa}, and Elliot {Paquette}.
\newblock {On the speed of distance stationary sequences}.
\newblock {\em arXiv e-prints}, page arXiv:1912.12523, December 2019.

\bibitem[dG75]{deguzman1975}
Miguel de~Guzm\'{a}n.
\newblock {\em Differentiation of integrals in {$R\sp{n}$}}.
\newblock Lecture Notes in Mathematics, Vol. 481. Springer-Verlag, Berlin-New
  York, 1975.
\newblock With appendices by Antonio C\'{o}rdoba, and Robert Fefferman, and two
  by Roberto Moriy\'{o}n.

\bibitem[Dob59]{dobrushin}
R.~L. Dobru\v{s}in.
\newblock A general formulation of the fundamental theorem of {S}hannon in the
  theory of information.
\newblock {\em Uspehi Mat. Nauk}, 14(6 (90)):3--104, 1959.

\bibitem[FK83]{furstenberg-kifer}
H.~Furstenberg and Y.~Kifer.
\newblock Random matrix products and measures on projective spaces.
\newblock {\em Israel J. Math.}, 46(1-2):12--32, 1983.

\bibitem[Fur63]{furstenberg1963}
Harry Furstenberg.
\newblock Noncommuting random products.
\newblock {\em Trans. Amer. Math. Soc.}, 108:377--428, 1963.

\bibitem[GdM89]{goldsheid-margulis}
{I. Ya.} Gol\cprime~dshe\u{\i}d and G.~A. Margulis.
\newblock Lyapunov exponents of a product of random matrices.
\newblock {\em Uspekhi Mat. Nauk}, 44(5(269)):13--60, 1989.

\bibitem[GfY59]{gelfand-yaglom}
I.~M. Gel\cprime~fand and A.~M. Yaglom.
\newblock Calculation of the amount of information about a random function
  contained in another such function.
\newblock {\em Amer. Math. Soc. Transl. (2)}, 12:199--246, 1959.

\bibitem[GR89]{guivarch-raugi}
Yves Guivarc'h and Albert Raugi.
\newblock Propri\'{e}t\'{e}s de contraction d'un semi-groupe de matrices
  inversibles. {C}oefficients de {L}iapunoff d'un produit de matrices
  al\'{e}atoires ind\'{e}pendantes.
\newblock {\em Israel J. Math.}, 65(2):165--196, 1989.

\bibitem[Gra11]{gray}
Robert~M. Gray.
\newblock {\em {Entropy and information theory. 2nd ed.}}
\newblock New York, NY: Springer, 2nd ed. edition, 2011.

\bibitem[HS17]{hochman-solomyak2017}
Michael Hochman and Boris Solomyak.
\newblock On the dimension of {F}urstenberg measure for {$SL_2(\Bbb R)$} random
  matrix products.
\newblock {\em Invent. Math.}, 210(3):815--875, 2017.

\bibitem[Kom67]{komlos}
J.~Koml{\'o}s.
\newblock A generalization of a problem of {S}teinhaus.
\newblock {\em Acta Math. Acad. Sci. Hungar.}, 18:217--229, 1967.

\bibitem[Led84]{ledrappier}
F.~Ledrappier.
\newblock Quelques propri\'{e}t\'{e}s des exposants caract\'{e}ristiques.
\newblock In {\em \'{E}cole d'\'{e}t\'{e} de probabilit\'{e}s de
  {S}aint-{F}lour, {XII}---1982}, volume 1097 of {\em Lecture Notes in Math.},
  pages 305--396. Springer, Berlin, 1984.

\bibitem[Mak40]{maker1940}
Philip~T. Maker.
\newblock The ergodic theorem for a sequence of functions.
\newblock {\em Duke Math. J.}, 6:27--30, 1940.

\bibitem[Nev75]{neveu1975}
J.~Neveu.
\newblock {\em Discrete-parameter martingales}.
\newblock North-Holland Publishing Co., Amsterdam-Oxford; American Elsevier
  Publishing Co., Inc., New York, revised edition, 1975.
\newblock Translated from the French by T. P. Speed, North-Holland Mathematical
  Library, Vol. 10.

\bibitem[Ose68]{oseledets}
V.~I. Oseledec.
\newblock A multiplicative ergodic theorem. {C}haracteristic {L}japunov,
  exponents of dynamical systems.
\newblock {\em Trudy Moskov. Mat. Ob\v{s}\v{c}.}, 19:179--210, 1968.

\bibitem[Per59]{perez}
Albert Perez.
\newblock Information theory with an abstract alphabet. {G}eneralized forms of
  {M}c{M}illan's limit theorem for the case of discrete and continuous times.
\newblock {\em Theor. Probability Appl.}, 4:99--102, 1959.

\bibitem[Pin64]{pinsker}
M.~S. Pinsker.
\newblock {\em Information and information stability of random variables and
  processes}.
\newblock Translated and edited by Amiel Feinstein. Holden-Day, Inc., San
  Francisco, Calif.-London-Amsterdam, 1964.

\bibitem[Sha48]{shannon}
C.~E. Shannon.
\newblock A mathematical theory of communication.
\newblock {\em Bell System Tech. J.}, 27:379--423, 623--656, 1948.

\bibitem[Ste70]{stein1970}
Elias~M. Stein.
\newblock {\em Singular integrals and differentiability properties of
  functions}.
\newblock Princeton Mathematical Series, No. 30. Princeton University Press,
  Princeton, N.J., 1970.

\end{thebibliography}
\end{document}